\title[Median filter for MCF]{Median filter method for mean curvature flow using a random Jacobi algorithm}
\author{Tim Laux}
\thanks{T.L.\ Faculty of Mathematics, University of Regensburg, Universitätsstrasse 31, 93053
Regensburg, Germany. \nolinkurl{tim.laux@ur.de}}
\author{Anton Ullrich}
\thanks{A.U.\ Max Planck Institute for Mathematics in the Sciences, Inselstraße 22, 
04103 Leipzig, Germany. \nolinkurl{anton.ullrich@mis.mpg.de}}
\date{}
\definecolor{webgreen}{rgb}{0,.5,0}
\definecolor{webbrown}{rgb}{.6,0,0}
\definecolor{RoyalBlue}{cmyk}{1, 0.50, 0, 0}
\newcommand*{\dd}{\mathop{}\!\mathrm{d}}
\newcommand{\vect}[1]{\begin{pmatrix}#1\end{pmatrix}}
\newcommand{\norm}[1]{\left\lVert#1\right\rVert}
\DeclareMathOperator*{\sign}{sign}
\DeclareMathOperator*{\Lip}{Lip}
\DeclareMathOperator*{\BUC}{BUC}
\DeclareMathOperator*{\TL}{TL}
\DeclareMathOperator*{\TV}{TV}
\DeclareMathOperator*{\argmin}{argmin}
\DeclareMathOperator*{\spn}{span}
\DeclareMathOperator*{\dist}{dist}
\DeclareMathOperator*{\ddiv}{div}
\DeclareMathOperator*{\vol}{vol}
\DeclareMathOperator*{\Per}{Per}
\DeclareMathOperator*{\med}{med}
\DeclareMathOperator*{\pmed}{med_p}
\DeclareMathOperator*{\ulim}{{\lim}^*}
\DeclareMathOperator*{\llim}{{\lim}_*}
\theoremstyle{plain}
\newtheorem{defi}{Definition}[section]
\newtheorem*{thm*}{Theorem}
\newtheorem*{ex*}{Example}
\newtheorem*{defi*}{Definition}
\newtheorem*{remark*}{Remark}
\newtheorem*{prop*}{Proposition}
\newtheorem{thm}[defi]{Theorem}
\newtheorem{Corollary}[defi]{Corollary}
\newtheorem{prop}[defi]{Proposition}
\newtheorem{lemma}[defi]{Lemma}
\newtheorem{remark}{Remark}
\newtheorem*{lemma*}{Lemma}
\pgfplotsset{compat=newest}
\begin{document}

\begin{abstract}
We present an efficient scheme for level set mean curvature flow using a domain discretization and median filters. For this scheme, we show convergence in $L^\infty$-norm under mild assumptions on the number of points in the discretization. In addition, we strengthen the weak convergence result for the MBO thresholding scheme applied to data clustering of Lelmi and one of the authors. This is done through a strong convergence of the discretized heat flow in the optimal regime. Different boundary conditions are also discussed.
\end{abstract}

\keywords{Mean curvature flow, viscosity solution, thresholding scheme, median filter, Young angle condition, continuum limit.
\emph{MSC2020:} 53E10 (Primary); 35D40; 60D05 (Secondary)}

\maketitle

\tableofcontents

\section{Introduction}
\label{Introduction}

This paper presents a fast algorithm for the approximation of level set mean curvature flow and proves its convergence. At its core, the algorithm is a median filter based evolution and is thus equivalent to the MBO scheme on any level set.
The novelty of this algorithm is that it works simultaneously on all level sets on discretized domains in a way that easily generalizes to high dimensions. It is similar to the median filter algorithm proposed by \cite{esedoglumedianfilter} which in turn is based on the algorithm of \cite{Oberman}.
For our evolution, we show almost sure convergence in the strong $L^\infty$-topology under the assumption that the discretization has sufficiently many points, see Theorem~\ref{Cor:Conv} for details. In addition, we obtain the convergence of a heat flow in a related discretization and thus strengthen the weak convergence results of the MBO scheme obtained by \cite{jonatim}, see Theorem~\ref{Thm:TLConv} and the following corollaries. Finally, we prove the $\Gamma$-convergence of the energies associated with our evolution even under inhomogeneous Young angle conditions at the boundary.

Mean curvature flow and the heat flow are among the most important and applied parabolic evolutions. The mean curvature flow finds applications in crystal growth, material property analysis, soap films but more recently also in machine learning, image denoising, classification and clustering. For these more modern applications, it is crucial to develop and understand efficient numerical methods in high dimensions.

For smooth submanifolds it can be described as the evolution in time that is given by proposing a normal velocity equal to the negative mean curvature. Formally, this can be seen as a gradient flow of the area functional which also rectifies the name curve shortening flow in two dimensions. Stationary objects of this flow are minimal surfaces, i.e., have vanishing mean curvature. This leads to the use of the evolution in classification problems.

In this paper, we are mainly interested in the level set mean curvature flow. This is a flow on functions where almost every level set evolves by mean curvature flow. The formulation of this flow is due to Osher and Sethian~\cite{OsherSethian}. Based on the level set flow, we will study the viscosity solution and convergences to this solution. The concept of viscosity solutions was introduced independently by Evans and Spruck~\cite{ESI}, and Chen, Giga and Goto~\cite{ChenGigaGoto}.

The MBO scheme is an efficient algorithm for the approximation and simulation of mean curvature flow that was proposed by Merriman, Bence and Osher in \cite{merriman1992diffusion}. It is based on a diffusion and thresholding coupling that is iterated. For a detailed explanation, see Section~\ref{Algorithm}.

\subsection{Structure of the paper}

First, in Section~\ref{Algorithm}, we present the setting of the scheme and give the notation needed to formulate our main theorem for the convergence to viscosity solutions of mean curvature flow, Theorem~\ref{Cor:Conv}. Along with this, we introduce the definition of mean curvature flow, notions of weak solution concepts and various convergences. In addition, we discuss the algorithm and related schemes.
Roughly speaking, the algorithm is based on the observation that the median corresponds to the level set Laplacian in the same way that the mean corresponds to the usual Laplacian. Together with the fact that level set mean curvature flow is described by the parabolic equation of this operator, we get an efficient and simple algorithm to approximate this flow. More precisely, the algorithm takes a discretization of our domain with a given initial datum of the flow. Then, in each step, the value of each point is updated by taking the median in a local neighborhood. Different possible choices of neighborhoods and stencils are discussed with their advantages and disadvantages. This scheme can also be seen as a direct extension of the MBO scheme to level set functions.

Afterwards, in Section~\ref{LinfConv}, we prove that the algorithm converges uniformly to the viscosity solution. This is done in two parts. First, we prove that for the continuum limit (in space), the algorithm approximates level set mean curvature flow in terms of viscosity solutions uniformly in time and space, see Lemma~\ref{UniformConvergence}. The proof is based on the results of Ishii, Pires, Souganidis, see~\cite{IPS}. Next, we show that given enough points in the local neighborhood, the discrete median of randomly distributed points approximates the continuous median almost surely. Thus, our discrete scheme converges. This is shown in Theorem~\ref{Cor:Conv}, which can be summarized as follows:
\begin{thm*}[Version of Theorem~\ref{Cor:Conv}]
    For an initial datum $g$ there exists a regime of step size and number of samples (step size going to $0$ and sample size to $\infty$) s.t.\ the solutions of the scheme converge almost surely to the viscosity solution of mean curvature flow locally uniformly in time and space.
\end{thm*}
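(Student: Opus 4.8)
The plan is to estimate the error via the triangle inequality, splitting it into a ``discretization'' part and a ``sampling'' part. Write $u$ for the viscosity solution of level set mean curvature flow with datum $g$, $u^{\tau}$ for the solution of the \emph{continuum} median filter scheme (exact medians over balls $B_{r(\tau)}(x)$, time step $\tau$), and $u^{\tau,N}$ for the fully discrete scheme in which each median over a ball is replaced by the empirical median of $N$ i.i.d.\ uniform samples in that ball. Then on any compact space-time set $K$,
\begin{equation*}
\norm{u^{\tau,N} - u}_{L^\infty(K)} \le \norm{u^{\tau,N} - u^{\tau}}_{L^\infty(K)} + \norm{u^{\tau} - u}_{L^\infty(K)},
\end{equation*}
and the second term tends to $0$ as $\tau\to 0$ by Lemma~\ref{UniformConvergence} (the IPS-type consistency and stability argument already established). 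Everything thus reduces to controlling the first term.

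For the first term I would exploit that the median operator—empirical or continuum—is order preserving and commutes with the addition of constants, hence the one-step update map is non-expansive in $L^\infty$. The comparison is then carried out by induction on the time step: assuming $\norm{u^{\tau,N}_k - u^{\tau}_k}_{L^\infty} \le k\,\varepsilon_N$, I insert the auxiliary quantity ``empirical median of the \emph{deterministic} function $u^\tau_k$ over the random samples''. Non-expansivity of the empirical median over a fixed sample set absorbs the error from replacing $u^{\tau,N}_k$ by $u^\tau_k$, while the remaining term $\bigl|\med^{\mathrm{emp}}_x u^\tau_k - \med^{B_r(x)}_x u^\tau_k\bigr|$ is a pure sampling error for a fixed function. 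This yields $\norm{u^{\tau,N}_{k+1} - u^{\tau}_{k+1}}_{L^\infty} \le \norm{u^{\tau,N}_k - u^{\tau}_k}_{L^\infty} + \varepsilon_N$, hence after $k \le T/\tau$ steps a bound of $(T/\tau)\,\varepsilon_N$, provided the per-step sampling error stays below $\varepsilon_N$ everywhere. The crucial device here is that the inductive comparison is always made against the deterministic continuum iterate $u^\tau_k$, which decouples the randomness of the samples from the evolving discrete solution.

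The sampling error is handled by concentration: by a Dvoretzky--Kiefer--Wolfowitz / Hoeffding estimate, the empirical CDF of $N$ i.i.d.\ evaluations of a fixed function deviates from the true CDF by more than $\varepsilon$ with probability at most $C e^{-cN\varepsilon^2}$, and this transfers to the median as long as the value distribution has no atom and is not too flat at the median level—this is where the continuum regularity of $u^\tau_k$ (or a vanishing perturbation rendering the quantile non-degenerate) enters. A union bound over the $\lesssim \tau^{-1}$ time steps and over an $\varepsilon_N$-net of the $\lesssim h^{-d}$ stencil centers shows the per-step error exceeds $\varepsilon_N$ somewhere with probability at most $C\,h^{-d}\tau^{-1} e^{-cN\varepsilon_N^2}$.

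It remains to choose the regime. I would fix $\varepsilon_N = \tau^{1+\delta}$ for a small $\delta>0$, so that the accumulated error $(T/\tau)\varepsilon_N = T\tau^{\delta}\to 0$, and then require $N$ large enough—e.g.\ $N \ge \tau^{-2-3\delta}\bigl(\log(1/\tau) + d\log(1/h)\bigr)$—that along a chosen sequence $\tau_j\to 0$, $h_j\to 0$, $N_j\to\infty$ the union-bound probabilities are summable. Borel--Cantelli then gives that almost surely, for all large $j$, the sampling error is controlled, whence $u^{\tau_j,N_j}\to u$ locally uniformly, which is the assertion. I expect the main obstacle to be precisely this sampling step carried out \emph{uniformly and simultaneously} over all stencil centers and all $\sim 1/\tau$ time steps while the random stencils are coupled to the solution they generate, together with ruling out degeneracy (flat spots / fattening) of the median; this is what dictates how fast $N$ must be allowed to grow relative to the step size.
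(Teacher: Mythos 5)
Your overall strategy matches the paper's. The triangle-inequality decomposition into (i) the continuum-scheme-to-PDE error, handled by the Barles--Souganidis/Ishii--Pires--Souganidis consistency and stability framework (the paper's Lemma~\ref{UniformConvergence}), and (ii) the discrete-to-continuum-scheme error, controlled by comparing the empirical median to the continuum median of the \emph{deterministic} continuum iterate, using monotonicity and relabeling (hence $L^\infty$ non-expansivity) to close an induction over time steps, Dvoretzky--Kiefer--Wolfowitz concentration, a union bound over $\sim T/h$ time steps and stencil centers, and Borel--Cantelli for almost-sure convergence --- this is exactly the structure of Section~\ref{DiscreteCase}; in particular the error-iteration you describe is precisely the estimate closing the proof of Lemma~\ref{Thm:ConvMedians}.

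The genuine gap is the step you flag but leave unresolved. To transfer the DKW bound on the empirical CDF into a bound on the empirical median you need a quantitative, \emph{uniform-in-stencil-and-time} lower bound on the quantities $\tau_{1/2}$ measuring the mass that the continuum iterate places within $\varepsilon$ of its median level over $A_r(x)$. Appealing vaguely to ``continuum regularity of $u^\tau_k$'' or a ``vanishing perturbation'' does not produce this; the entire technical heart of Section~\ref{DiscreteCase} is the estimate $\tau_{1/2}\gtrsim \omega_g^{-1}(\varepsilon)/r$ (Lemma~\ref{tauEstimate}). It rests on two nontrivial facts: (a) the median-filter update preserves the modulus of continuity of $g$, so all continuum iterates are equicontinuous with the \emph{same} modulus as the initial datum, and (b) a relative isoperimetric inequality in the stencil (Proposition~\ref{MinkowskiContent}, proved from scratch for shrinking annuli in Proposition~\ref{Prop:relIsopAnnuli}) gives a lower bound on the outer Minkowski content of the median superlevel set, converting the modulus-of-continuity bound into a CDF slope bound. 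Without this input you cannot choose $\varepsilon_N$ so that the exponent $N\varepsilon_N^2$ grows. A smaller discrepancy: the scheme samples a single realization of $\sim N$ points over the \emph{whole domain} (i.i.d.\ uniform or Poisson), not $N$ fresh points per ball, so the number of samples per neighborhood is random of order $Nr^d$, and the DKW estimate has to be convolved with a binomial/Poisson counting argument before taking the union bound; this is why the paper arrives at the regime $r\gg \log N / N^{1/(d+2)}$ for Lipschitz data rather than the cleaner $N\varepsilon_N^2\gg\log(1/h)$ you wrote.
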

Both parts are done in $\mathbb{R}^d$, more precisely in the flat torus $\mathbb{T}^d$ without boundary. However, the authors believe that other generalizations similar to the other results in later sections are also possible. For example, we analyze the scheme for different annuli and changing kernels but other symmetric kernels would work as well. One could also change the density used in the measure to a smooth density. If the domain changes to a general compact Lipschitz domain in $\mathbb{R}^d$ or in a manifold, one would have to include boundary conditions. The natural condition for this would be homogeneous Neumann conditions, while even inhomogeneous can be done with the changes described in Section~\ref{Program}. On a manifold, one would have to include the curvature of the underlying structure. In addition, the sampling can be modified. We could also ask the question e.g.\ for other random point processes or lattices.

In Section~\ref{TL-convergence}, we show another notion of convergence for the heat flow that can be used for the MBO scheme. This approach gives better results for the number of points. We can even achieve the optimal regime, because the errors are averaged over time and the evolutions become calibrated to the discrete structure. This could not be captured by the pointwise approach of the previous section. Similar to the result of Section~\ref{LinfConv}, we show the convergence of a fully discretized mean scheme for the heat equation to the continuous heat flow, see Theorem~\ref{Thm:TLConv}. Thus, we conclude the same convergence for the heat flows in the sampled and continuous setting, see Theorem~\ref{Cor:HeatFlows}. The convergence can be roughly stated as follows
\begin{thm*}[Version of Theorem~\ref{Cor:HeatFlows}]
    Let initial data $g_N$ on the discretized domains be given in such a way that they convergence weakly to the initial datum $g$ for the heat flow $u$. Moreover, denote by $u_N$ the discrete heat flows starting at $g_N$. Then, we have almost surely that $u_N(t)$ converges strongly to $u(t)$ for every time $t>0$.
\end{thm*}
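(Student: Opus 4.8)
The plan is to deduce the statement from Theorem~\ref{Thm:TLConv}, whose conclusion is the almost sure strong convergence of the fully discretized mean scheme to the continuum heat flow $u$ in the optimal regime, but with errors measured in an average over a time interval. Two things have to be added: passing from the time-discrete mean scheme to the actual discrete heat flow $u_N$, and passing from the time-averaged mode of convergence to strong convergence for every fixed $t>0$; the latter is exactly where the instantaneous smoothing of the heat flow turns the merely weak convergence $g_N\rightharpoonup g$ of the data into strong convergence of the flows. First I would treat the time discretization. One step of the mean filter on the point cloud differs from $\mathrm{Id}+\tau\Delta_N$ by a consistency error of order $o(\tau)$, where $\Delta_N$ is the associated graph Laplacian and $\tau$ the time step tied to the kernel bandwidth, and the filter, being an averaging operator, is a contraction; hence a standard consistency-plus-stability estimate, equivalently a spectral estimate for $\Delta_N$, shows that the distance $\norm{u_N(t)-v_N(t)}$ between the graph heat flow $u_N$ and the mean scheme $v_N$ started at the same data tends to $0$ as $\tau\to0$, uniformly for $t\in[t_0,T]$ and along almost every realization. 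Since the regime forces $\tau=\tau(N)\to 0$, a triangle inequality reduces everything to the convergence $v_N\to u$ of Theorem~\ref{Thm:TLConv}.

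The second and main step upgrades the time-averaged convergence of $v_N$, and hence of $u_N$, to pointwise-in-time strong convergence on $(0,\infty)$. I would exploit that the heat flow is smoothing, uniformly in the discretization. On the continuum side $t\mapsto u(t)$ is analytic into every Sobolev space on $(0,\infty)$, in particular continuous into $L^2$. On the discrete side the graph heat flow is the gradient flow of the discrete Dirichlet energy $E_N$, so the energy--dissipation identity reads $\int_s^t\norm{\partial_r u_N}^2\,\dd r=E_N(u_N(s))-E_N(u_N(t))$, while self-adjointness and negativity of $\Delta_N$ give the smoothing bound $E_N(u_N(s))\le \tfrac{C}{s}\norm{g_N}^2$ with a constant $C$ independent of the point cloud. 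As $\norm{g_N}$ is bounded --- a weakly convergent sequence is bounded --- this yields, uniformly in $N$ and for $s\ge t_0$, both the equicontinuity estimate $\norm{u_N(t)-u_N(s)}\le \sqrt{|t-s|}\,\sqrt{C/t_0}\,\norm{g_N}$ and an equibound on $E_N(u_N(t))$. The equicontinuity together with the time-averaged convergence identifies the limit of $u_N(t)$ at every $t>0$ as $u(t)$, and the equibound on the Dirichlet energies supplies the $\TL^2$-compactness --- the compactness half of the $\Gamma$-convergence of the graph Dirichlet energies in the $\TL^2$-framework --- which forces every such limit to be strong (and to lie in $H^1$). Hence $u_N(t)\to u(t)$ strongly for each $t>0$, almost surely. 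The exclusion of $t=0$ is sharp, since there only $g_N\rightharpoonup g$ is available.

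The main obstacle I anticipate is the bookkeeping coupling the small parameters: one must check that the admissible regime of Theorem~\ref{Thm:TLConv} still permits $\tau=\tau(N)\to 0$ after also demanding that the time-discretization error vanish, and --- more importantly --- that every estimate above (time-discretization error, sampling error, smoothing and energy--dissipation estimates) is uniform on compact subintervals $[t_0,T]\subset(0,\infty)$, so that the equicontinuity argument can be run there and $t_0$ then sent to $0$. A further, routine, point is that $\Delta_N$, $E_N$, and the constants built from them are random; the energy--dissipation identity is deterministic for each realization, and the only randomness that needs controlling is the $L^2$-norm (and, for the $\Gamma$-convergence, the Dirichlet energies) of $g_N$, which is already part of the $\TL^2$-setup underlying Theorem~\ref{Thm:TLConv}. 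Granting all this, the proof is the triangle inequality of the first step combined with the compactness argument of the second.
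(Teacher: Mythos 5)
Your compactness and smoothing estimates track the paper's: the energy--dissipation estimate $\int_0^t E_{r(N),N}(u_N(s))\,\dd s\lesssim\norm{g_N}^2$ together with monotonicity of the energy (or, equivalently, your spectral smoothing bound $E_N(u_N(s))\lesssim s^{-1}\norm{g_N}^2$) yields a uniform bound on $E_{r(N),N}(u_N(t))$ for each fixed $t>0$, and then the $\Gamma$-compactness of Lemma~\ref{Lemma:GammaConv} gives a subsequence with $u_N(t)\to v(t)$ strongly in $\TL^2(D)$. The genuine gap is the identification $v(t)=u(t)$. You assert that ``the equicontinuity together with the time-averaged convergence identifies the limit,'' but there is no such time-averaged convergence to invoke: Theorem~\ref{Thm:TLConv} is stated pointwise in time and, crucially, it requires $g_N\to g$ \emph{strongly} in $\TL^2$ together with $\limsup_N E_{r(N),N}(g_N)<\infty$. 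Under the hypotheses of Theorem~\ref{Cor:HeatFlows} only weak $\TL^2$-convergence of the data is available, so neither your Step~1 reduction ``$v_N\to u$ by Theorem~\ref{Thm:TLConv}'' nor an appeal to a time-averaged version is legitimate. Equicontinuity plus compactness tells you the subsequential limits are strong and $H^1$; it does not tell you which function they are.

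The mechanism the paper uses to close this gap, and which is absent from your proposal, is duality via the self-adjointness of the heat semigroup. One tests $v(t)$ against a smooth $\varphi$, rewrites $\int u_N(t)\,\varphi\,\dd\mu_N=\int g_N\,\varphi_N(t)\,\dd\mu_N$ using self-adjointness of the discrete semigroup, and then applies Theorem~\ref{Thm:TLConv} not to $g_N$ but to the \emph{test-function flow} $\varphi_N(t)$ started from the constant sequence $\varphi_N\equiv\varphi\in C^\infty(D)$, which is admissible by Remark~\ref{Rem:Admiss-initial}; pairing the resulting strong $\TL^2$-convergence of $\varphi_N(t)\to\varphi(t)$ with the assumed weak $\TL^2$-convergence $g_N\rightharpoonup g$ passes the integral to $\int g\,\varphi(t)\,\dd\mu=\int u(t)\,\varphi\,\dd\mu$ (along the transport maps $T_N\to\mathrm{Id}$). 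This is where the weak hypothesis on $g_N$ actually enters. Without this duality step your argument only produces a subsequential strong limit $v(t)$ that you cannot equate with $u(t)$. A secondary point: your Step~1 comparison between a time-discrete mean scheme and the graph heat flow is redundant here, since $u_N$ is already defined as the continuous-in-time graph heat flow and the $\tau\to 0$ passage is internal to Theorem~\ref{Thm:TLConv}; the difficulty was never the time discretization but the weakness of the initial data.
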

This convergence is the ${\TL}^2$-convergence which can be understood as a $L^2$ variant for discrete settings. As a consequence of this convergence, we strengthen the result of \cite{jonatim} with their technique to obtain a separated weak convergence of the MBO scheme. These results are obtained for more general kernels, densities and domains, even on manifolds which is summarized in Corollary~\ref{Cor:generalSettingTL}. Since the structure of the sampling only plays a role in the $\Gamma$-convergence of the energies, we can directly generalize this part to point processes with weak assumptions on their distribution. The results are obtained for homogeneous Neumann conditions but we discuss in Section~\ref{Program} that other conditions are feasible as well.

Next, in Section~\ref{Program}, we discuss the implementations and applications of our scheme. The different implementations can be found on \url{https://mathrepo.mis.mpg.de/Medianfilter/}. We also analyze variants of the scheme. In particular, we will show a generalization to Young angle conditions which extend the Neumann conditions. For these boundary conditions, we propose a new scheme and show the $\Gamma$-convergence of the associated energies, see Lemma~\ref{Lemma:NeumannConditions}.
In the following section, Section~\ref{History}, other papers and related work are discussed. Finally, in Section~\ref{Outlook}, we give an outlook on interesting open questions related to this work.

\subsection{Notation}

\begin{figure}[!ht]
    \centering
    \begin{tabular}{|c|l| } 
    \hline
    Symbol & Explanation \\
    \hline\hline
    $\mathbb{P}$ & \makecell[vl]{probability measure associated with the uniform\\ density on the domain}\\
    \hline
    $\mathbb{P}_N$ & \makecell[vl]{probability measure associated with point process\\ of sampled points in domain}\\
    \hline
    $P$ & Set of points of the (Poisson) point process\\
    \hline
    $\Lambda$ & intensity for the Poisson point process\\
    \hline
    $r$ & space variable and distance, equal to $\sqrt{h}$\\
    \hline
    $h$ & time step size, equal to $r^2$\\
    \hline
    $g$ & \makecell[vl]{initial data of scheme in $\BUC(D)$\\ (bounded uniformly continuous functions)}\\
    \hline
    $u^n_h$ & step-wise evolution\\
    \hline
    $u(x,t)$ & viscosity solution to level set mean curvature flow of $g$\\
    \hline
    $G_hg$ & one step of the scheme with time step size $h$ applied to $u$\\
    \hline
    $m_\mathbb{P}$ & continuous median in the neighborhood domain\\
    \hline
    $m_N$ & \makecell[vl]{discrete median based on the Point process\\ in the neighborhood domain}\\
    \hline
    $Q_{h,t}^{(\mathbb{P}/N)}g$ & piecewise constant evolution with $m_{\mathbb{P}/N}$\\
    \hline
    $T$ & final time for evolution\\
    \hline
    $\mathbb{T}^d$ & \makecell[vl]{the $d$-dimensional flat torus \\($[0,1]^d$ with periodic boundary data)}\\
    \hline
    $N$ & total amount of points in the domain $\mathbb{T}^d$\\
    \hline
    $A_r$ & neighborhood domain of typical size $r$\\
    \hline
    $A_{\kappa,r}$ & the annulus $B_r\setminus B_{\kappa\cdot r}$\\
    \hline
    $\omega_d$ & the volume of the unit ball $B_1$ in $\mathbb{R}^d$ \\
    \hline
    $\mathds{1}_A$ & indicator function for $A$, i.e., $\mathds{1}_A(x)=1$ if $x\in A$ and $0$ else \\
    \hline
    $\widetilde \cdot$ & normalized quantity \\
    \hline
    $E_{r,N}$ & non-local, discretized energy \\
    \hline
    $E$ & continuous energy \\
    \hline
    $\nu_A$ & outward unit normal vector to the domain $A$ \\
    \hline
    \end{tabular}
    \caption{The notation of this paper.}
    \label{table:notation}
\end{figure}

We will use the Landau notation to talk about vanishing quantities depending on a variable $r\to 0$.

\begin{defi}[Landau notation]
    For a sequence $r\to 0$, we say that 
    \begin{itemize}
        \item $f(r)\in o(g(r))$ if $\lim\limits_{r\to 0}\frac{f(r)}{g(r)}\to 0$,
        \item $f(r)\in \mathcal{O}(g(r))$ if $\limsup\limits_{r\to 0}\frac{f(r)}{g(r)}<\infty$,
        \item $f(r)\in \Omega(g(r))$ if $\liminf\limits_{r\to 0}\frac{f(r)}{g(r)}>0$.
    \end{itemize}
\end{defi}

Further notations can be found in Figure~\ref{table:notation}.

\section{Setting \& Algorithm}
\label{Algorithm}

In this paper, we will examine an algorithm that approximates the level set mean curvature flow in terms of viscosity solutions. For this purpose, consider the domain $D$ which we will assume for the moment to be $D\coloneqq \mathbb{T}^d$, the $d$-dimensional flat torus, and an initial datum $g:D\to\mathbb{R}$ lying in $\BUC(D)$, the bounded uniform continuous functions. We will always consider $d\geq 2$.
Various modifications are applicable and will be discussed at their respective places. For example, we will consider general Lipschitz domains $D\subset \mathbb{R}^d$ with Neumann conditions or smooth manifolds with a continuous density.
We want to show the convergence to the viscosity solution of level set mean curvature flow $u$ with initial datum $g$. The equation for this evolution is
\begin{align}
    \label{Eq:Levelset}
    \partial_t u=|\nabla u|\nabla\cdot \frac{\nabla u}{|\nabla u|}=\Delta u-\frac{\nabla u}{|\nabla u|}\cdot \nabla^2u \frac{\nabla u}{|\nabla u|}.
\end{align}

\begin{remark*}
    If $u$ is smooth with $\nabla u\neq 0$, this evolution implies that the level sets of $u$ evolve by mean curvature flow.
\end{remark*}
This equation also can be verified rigorously and intuitively. For a simple reasoning, consider a trajectory $x(t)\in D$ for $t\in (t_1,t_2)$ following one level set, i.e., $u(x(t),t) = \text{const.}$ and suppose $\nabla u(x(t),t) \neq 0$ for all $t\in(t_1,t_2)$. 
The outer unit normal vector to the sub level sets is given by $\nu(x,t)=\frac{\nabla u(x,t)}{|\nabla u(x,t)|}$ and since the mean curvature is the divergence of the outer unit normal, $H=\nabla\cdot \nu$, the mean curvature flow equation can be expressed by $\dot x\cdot \nu=-\nabla\cdot \nu$. Thus
\begin{align*}
0=\frac{\dd}{\dd t}u(x(t),t)
=\partial_t u+\dot x\cdot\nabla u
=\partial_t u-|\nabla u|\nabla\cdot\nu
\end{align*}
and hence, we obtain the level set equation for $u$:
$$\partial_t u=|\nabla u|\nabla\cdot \nu=\Delta u-\frac{\nabla u}{|\nabla u|}\cdot \nabla^2 u\frac{\nabla u}{|\nabla u|}.$$

For our algorithm, we fix a sample size $N\in\mathbb{N}$ and a localization radius $r>0$. This radius corresponds to a time step size $h\coloneqq r^2$ via parabolic scaling. Given a uniform sample $P$ of the domain with $N$ points in expectation (e.g., $N$ uniformly sampled points or a Poisson point process with intensity $N$), we construct the approximation $u_h^n(x)$ by a piecewise constant interpolation in time of the following iteration. Let $u_h^0(x)$ be a given initial datum $u_h^0\coloneqq g\in \BUC(D)$ and define iteratively
$$u_h^{n+1}(x)\coloneqq\med_{A_r(x)\cap P} u_h^n.$$
This means that in each step we take the simultaneous median $\med\limits_{A_r\cap P}$ in a local neighborhood $A_r$ over the sampled process for each point in the domain. A visualization of this algorithm can be seen in Figure~\ref{Fig:AlgVisualization}.

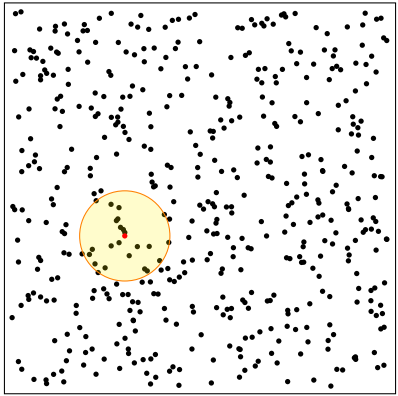
\begin{figure}[!ht]
\label{Fig:AlgVisualization}
\centering
\begin{tikzpicture}
\foreach \i in {1,...,500}
  \fill (rnd*5cm, rnd*5cm) circle (1pt);
\draw[draw=black] (-.1,-.1) rectangle (5.1,5.1);
\draw[draw=orange, fill=yellow, fill opacity=0.2] (1.5,2) circle (0.6);
\fill[red] (1.5,2) circle (1pt);
\end{tikzpicture}
\caption{Visualization of the algorithm.}
\end{figure}

We will consider different kernels. Specifically, $A_r$ will be either the ball $B_r$, an annulus $A_{\kappa, r}\coloneqq B_r\setminus B_{\kappa r}$ or shrinking annuli where $\kappa\to 1$ as $r\to 0$. 

\begin{algorithm}
\caption{The median filter scheme}\label{alg:Main}
\KwData{$g\in \BUC(D;\mathbb{R}), T\in\mathbb{R}_{>0}, P_N$}
\KwResult{Evolution $u^n_h$}
$u^0_h(x)\coloneqq g(x)\ \forall x\in P_N$\;
$n\gets 0$\;
 \While{$\frac{nh}{c_A}<T$}{
  $u^{n+1}_h(x)\coloneqq {\med}_N(u^n_h;A_r(x))\ \forall x\in P_N$\;
  $n\gets n+1$\;
 }
\end{algorithm}

In Algorithm~\ref{alg:Main}, we give a pseudo-code description of our scheme.
Here, ${\med}_N(u^n_h;A_r(x))\coloneqq \med\limits_{A_r(x)\cap P_N} u^n_h$ is the discrete median on the points of $P_N$ that lie in $A_r$ and the constant $c_A$ depends on the neighborhood domain $A_r$ as we show in Section~\ref{LinfConv}. The exact value of $c_A$ is explicit, see e.g., Remark~\ref{Rem:constantCA}.

In Section~\ref{DiscreteCase}, we can also use the estimate with a given number of points $N$ (instead of an intensity $N$) and assume a uniform i.i.d.\ sampling of $N$ points instead of a Poisson point process. This more closely models our program and yields the same estimates.

Next, we give an intuition why the median filter is the extension of the thresholding MBO scheme on characteristic functions to level set functions. For a set, or equivalently a characteristic function $\chi^n$, a step of the MBO scheme is given by a diffusion step with (normalized) kernel $K$ followed by a thresholding step:
$$\chi^{n+1}\coloneqq \mathds{1}_{\{K*\chi^n\geq \frac{1}{2}\}}.$$
We note that this is the solution to the minimization problem
\begin{align*}
    \chi^{n+1}&\in\argmin\left\{\,\int\limits_{\mathbb{R}^d}\chi(1-2K*\chi^n)\dd x\right\}\\
    &=\argmin\left\{\,\int\limits_{\mathbb{R}^d}\int\limits_{\mathbb{R}^d}K(x,y)|\chi(x)-\chi^n(y)|\dd y\dd x\right\}.
\end{align*}
If $\chi^n=\mathds{1}_{\{u^n<q\}}$ is the characteristic function of the sub level set of a function~$u^n$ and we integrate this minimization problem over all level sets, we arrive at
\begin{align*}
    u^{n+1}\in\argmin\left\{\,\int\limits_{\mathbb{R}^d}\int\limits_{\mathbb{R}^d}K(x,y)|u(x)-u^n(y)|\dd y\dd x\right\}.
\end{align*}
The Euler-Lagrange equation to this problem is
\begin{align*}
    0&=\int\limits_{\mathbb{R}^d}K(x,y)\sign(u(x)-u^n(y))\dd y.
\end{align*}
A solution of this equation is the median associated with the kernel $K$. This becomes the usual median as described in Section~\ref{LinfConv} for a kernel that is a (normalized) characteristic function.
More precisely, it may happen that the solution to this equation and the minimizers are not unique. In this case, we call the entire set of all minimizers the median and will usually consider the infinimum of it (which corresponds to maximal $\chi$ as done above). Additionally, for functions that have fattening, i.e., level sets with a positive mass, it can happen that the equation has no solution. In this situation, we have to consider a solution of the inequalities
$$\frac{1}{2}\leq \int\limits_{\mathbb{R}^d}K(x,y)\mathds{1}_{\{u(x)\leq u^n(y)\}}\dd y$$
and 
$$\frac{1}{2}\leq \int\limits_{\mathbb{R}^d}K(x,y)\mathds{1}_{\{u(x)\geq u^n(y)\}}\dd y,$$
see also the beginning of Section~\ref{LinfConv}.

Figure~\ref{fig:EvolEllipse} shows an example of the evolution of the proposed algorithm. The white circle in the upper left corner shows the size of the kernel $A_r$. The colored bands indicate some level sets of the function $u$ and the red line is a comparison of the evolution starting from a level set computed by a direct Euler method with a very small time step.

\begin{figure}[!ht]
    \centering
    \includegraphics[width=0.3\linewidth]{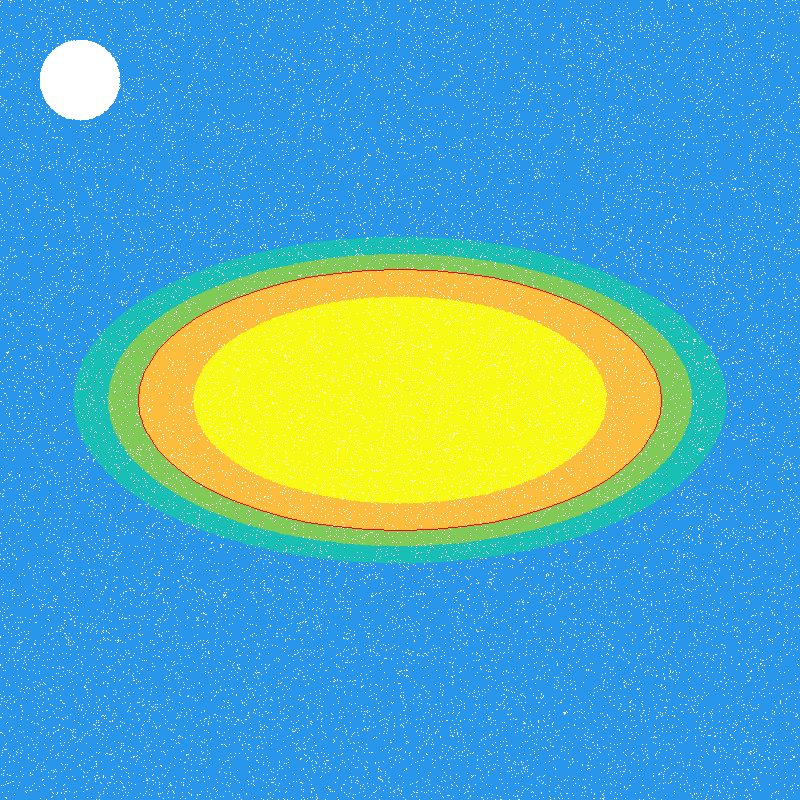}
    \includegraphics[width=0.3\linewidth]{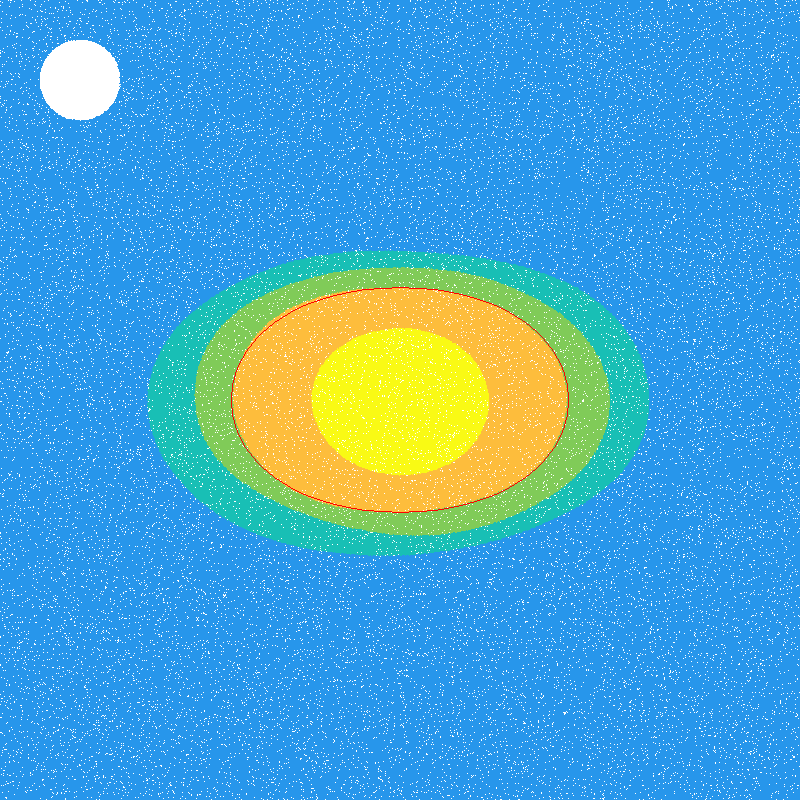}
    \includegraphics[width=0.3\linewidth]{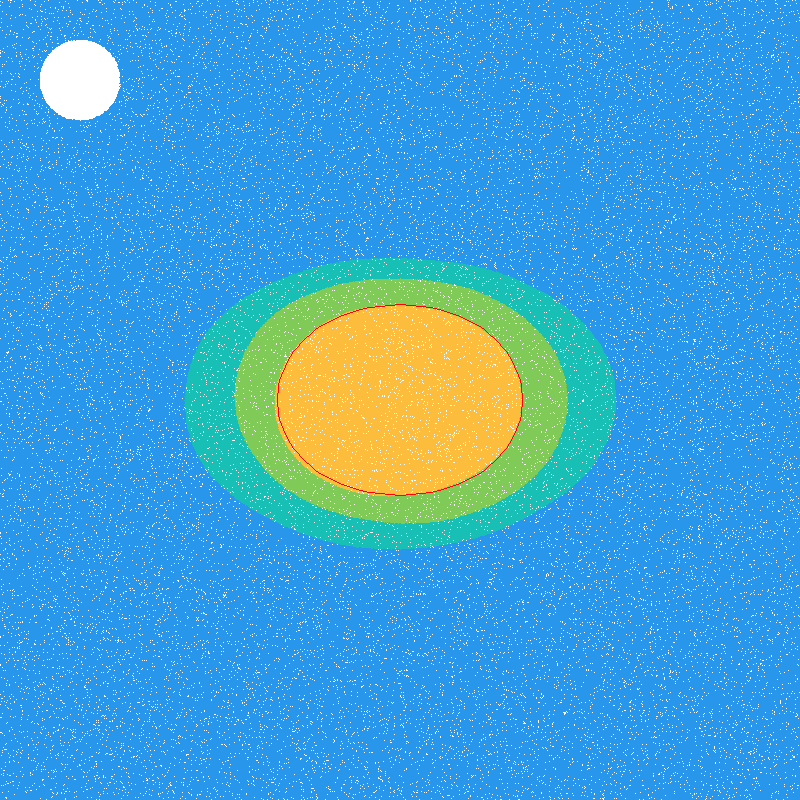}
    \caption{The evolution of a  non-symmetric function.}
    \label{fig:EvolEllipse}
\end{figure}

\subsection{Definitions}

In Section~\ref{TL-convergence}, we will compare evolutions on a discretized space with their continuous counterparts in $L^2$. To do this, we will need the notion of ${\TL}^2$-distances. This is due to \cite{TrillosSlepcev}.

\begin{defi}[${\TL}^2$-space]
    \label{Def:TL2}
    We define the space of compatible measure-function pairs $(\mu,f)$ as 
    $${\TL}^2(D)\coloneqq \{(\mu,f) \left| \mu\in \mathcal{P}(D), f\in L^2(D,\mu) \right\}.$$
\end{defi}

\begin{defi}[${\TL}^2$-convergence]
    \label{Def:TL2Conv}
    We define for $(\mu,f),(\nu,g)\in {\TL}^2(D)$ the distance $d_{{\TL}^2}$ as
    $$d_{{\TL}^2}((\mu,f),(\nu,g))\coloneqq \inf\limits_{\pi\in \Gamma(\mu,\nu)}\left(\int\limits_D\int\limits_D |x-y|^2+|f(x)-g(y)|^2\dd\pi(x,y)\right)^\frac{1}{2}$$
    and say that a sequence $\{(\mu_n,f_n)\}_{n\in\mathbb{N}}$ converges in ${\TL}^2$ if
    $$d_{{\TL}^2}((\mu_n,f_n),(\mu,f))\to 0.$$
    Here, $\Gamma(\mu,\nu)$ is the set of all transport plans from $\mu$ to $\nu$.
\end{defi}

This distance is indeed a metric on ${\TL}^2(D)$. Intuitively, it measures how to bring the mass from a function living on one measure space to the mass of another function on a different measure space. This will be the right notion to compare a function on the continuous space to a function on the sampled discrete space.

By an abuse of notation, we will sometimes write $u_N\to u$ in ${\TL}^2(D)$ for $u_N\in L^2(D,\mu_N), u\in L^2(D,\mu)$ instead of $(\mu_N, u_N)\to (\mu, u)$ when it is clear which measures are meant.

To make it clear what the initial datum is and what kind of evolution (fully discretized, time continuous and space discretized, non-local or continuous) we mean, we will use the notation $G_hg$ for one step of the algorithm and 
\begin{align}
    Q_{h,t}g\coloneqq G_h^jg \text{ for }t\in[jh,(j+1)h)
    \label{Eq:DefQ}
\end{align}
with $j\in\mathbb{N}_{\geq 0}$ for the entire piecewise constant-in-time evolution. Additional indices and exponents are used to specify the concrete setting and will be explained at their respective places. For the time continuous evolution, we will use $u(t,x)$ possibly with an index to specify a space discretization.

The continuous limit of our algorithm will be the viscosity solution to level set mean curvature flow.

\begin{defi}[Viscosity solution] \label{def:viscositysolution}
A continuous function $u\in C(D\times[0,T])$ is called a viscosity super-solution to \eqref{Eq:Levelset} if, for any $\varphi\in C^\infty(D\times[0,T])$ and $(x_0,t_0)\in D\times(0,T)$ such that $u-\varphi$ has a local minimum at $(x_0,t_0)$, the following inequality holds,
\begin{enumerate}[label=\roman*)]
    \item for non-critical points $\nabla \varphi(x_0,t_0)\neq 0:$
    $$\partial_t \varphi\geq \Delta \varphi-\frac{\nabla \varphi}{|\nabla \varphi|}\cdot \nabla^2\varphi\frac{\nabla \varphi}{|\nabla \varphi|}\quad\text{at }(x_0,t_0)$$
    \item and for critical points $\nabla \varphi(x_0,t_0)=0$ there exists $\xi\in\mathbb{R}^d, |\xi|\leq 1:$
    $$\partial_t \varphi\geq \Delta \varphi-\xi\cdot \nabla^2\varphi \xi\quad\text{at }(x_0,t_0).$$
\end{enumerate}
We say that $u$ is a viscosity sub-solution if $-u$ is a viscosity super-solution. Finally, $u$ is called a viscosity solution if it is both a viscosity sub- as well as a viscosity super-solution.
\end{defi}

\begin{defi}[Level set Laplacian]
\label{Def:F}
    We define the operator for viscosity solutions to level set mean curvature flow, the level set Laplacian, via
    $$F(\nabla \varphi, \nabla^2\varphi)\coloneqq\begin{cases}
    \Delta \varphi-\frac{\nabla \varphi}{|\nabla \varphi|}\cdot \nabla^2\varphi \frac{\nabla \varphi}{|\nabla \varphi|} &\text{for }\nabla\varphi\neq 0,\\ [\Delta \varphi-\lambda_d,\Delta \varphi-\lambda_1] &\text{for }\nabla\varphi= 0.
    \end{cases}$$
    Here, $\lambda_1\leq \dots\leq \lambda_d$ denote the eigenvalues of $\nabla^2 \varphi$.
\end{defi}
In the degenerate case, this is a set-valued operator. The set is the interval spanning from the sum of the smallest $d-1$ eigenvalues of $\nabla^2\varphi$ to the sum of the largest $d-1$ eigenvalues.

\begin{remark}
    With the definition of $F$, one can rewrite the conditions in Definition~\ref{def:viscositysolution} as
    $$\partial_t \varphi \geq \inf F(\nabla \varphi, \nabla^2\varphi)(x_0,t_0)$$
    for super-solutions and 
    $$\partial_t \varphi \leq \sup F(\nabla \varphi, \nabla^2\varphi)(x_0,t_0)$$
    for sub-solutions.

    In the case that $u$ is smooth, this condition becomes $\partial_t u\in F(\nabla u, \nabla^2 u)$. 
\end{remark}

\begin{remark*}
    In Section~\ref{LinfConv}, we will consider these definitions after a time rescaling with the constant factor $\frac{1}{c_A}$.
\end{remark*}

The proofs in Section~\ref{TL-convergence} are mostly based on minimization problems associated with converging energies. To obtain their convergence and to have the correct notation for the convergence of the energies, we need to define what we mean by $\Gamma$-convergence.

\begin{defi}[$\Gamma$-convergence]
    Let $X$ be a topological space and $(F_n)_{n\in\mathbb{N}}$ be a sequence of functionals $F_n:X\to\mathbb{R}\cup\{\infty\}$.
    Then, we say that $(F_n)_{n\in\mathbb{N}}$ $\Gamma$-converge to a functional $F:X\to\mathbb{R}\cup\{\infty\}$, written $F_n\overset{\Gamma}{\to}F$ if
    \begin{itemize}
        \item for all $x_n\to x, x_n,x\in X$ we have the liminf inequality
        $$\liminf\limits_n F_n(x_n)\geq F(x),$$
        \item for all $x\in X$ there exists a recovery sequence $x_n\to x, x_n\in X$ s.t.\ 
        $$\lim\limits_n F_n(x_n)= F(x).$$
    \end{itemize}
\end{defi}

\begin{remark*}
    The second property is often written as the equivalent formulation of a limsup inequality
    $$\limsup\limits_n F_n(x_n)\leq F(x).$$
\end{remark*}

Together with a compactness result of the family of energies, this will give us the fundamental theorem of $\Gamma$-convergence from which we can infer the convergence of the minimizers of the energies.

\begin{defi}[$\Gamma$-compactness]
    For a sequence $(F_n)_{n\in\mathbb{N}}$ of functionals, we say they are ($\Gamma$-)compact if for every sequence $(x_n)_{n\in\mathbb{N}}$, from the fact that $\limsup\limits_n F_n(x_n)<\infty$ it follows that $(x_n)_{n\in\mathbb{N}}$ is precompact.
\end{defi}

\section{\texorpdfstring{$L^\infty$}{Uniform} convergence \texorpdfstring{---}{-} Median Filter}
\label{LinfConv}

We will show the uniform convergence, in time and space, of our evolution to the viscosity solution, see Theorem~\ref{Cor:Conv}. 
The idea is to split this proof into two parts, a deterministic approximation and the randomness comparison.

Specifically, we will show that our scheme $u^n_h$ given by the median filter described in Section~\ref{Algorithm} converges to the level set solution $u(\cdot, t)$ of mean curvature flow (as $N\to \infty$ and $r,h\to 0$). This solution is given (under a time-rescaling) by the viscosity solution (Definition~\ref{def:viscositysolution}) to the evolution 
$$\partial_t u=c_AF(\nabla u, \nabla^2u)$$ 
for some $c_A>0$.
The RHS represents the surface Laplacian on the tangent plane. This scheme can be interpreted as a one-step direct Euler scheme with a discretization of the surface Laplacian:
$$\partial_t u\approx \frac{G_hu-u}{h}=\frac{m_Nu-u}{r^2}\approx c_AF(\nabla u, \nabla^2u).$$

The constant $c_A$ depends on the stencil (the local neighborhood) $A_r$ we use. In \cite{Oberman} and \cite{esedoglumedianfilter}, the sphere is used and there the constant is $\frac{1}{2}$ in dimension $d=2$. Note that for the $(d-1)$-dimensional sphere in $\mathbb{R}^d$ the constant would be $\frac{1}{2(d-1)}$.
We will use annuli (spherical shells) and as special examples, we will consider the ball $B_r$, a fixed annulus $A_{\kappa, r}\coloneqq B_r\setminus B_{\kappa r}$ and shrinking annuli, i.e., $A_{\kappa(r), r}$ with $\kappa(r)\to 1$ as $r\to 0$.
For the ball, the constant is $\frac{1}{2(d+1)}$ while for general annuli, it is $\frac{1}{2(d+1)}\frac{1-\kappa^{d+1}}{1-\kappa^{d-1}}$ and for the shrinking annuli $\frac{1}{2(d-1)}$ which coincides with the sphere as the limit.

The structure of our proof is as follows. First, in Section~\ref{ContinuousCase}, we show that the continuous mean field version of our scheme converges to the viscosity solution uniformly. In this version, the median of the points from the point process is replaced by a continuous median which is taken w.r.t.\ the uniform measure on $D$. 
Afterwards, in Section~\ref{DiscreteCase}, we prove that the discrete evolution relaying on the point process remains uniformly close to the continuous one and thus converges if the point process is fine enough. For details, see Theorem~\ref{Cor:Conv}.

\subsection{Continuous case}
\label{ContinuousCase}

For the deterministic approximation, we consider the mean field limits where the sampling is replaced by a uniform distribution over our domain $D\coloneqq\mathbb{T}^d$ (see Section~\ref{Introduction} for a discussion of other domains).
In this case, we prove the consistency for the distributional median $m_\mathbb{P}$ over each neighborhood instead of the empirical median. The definition of $m_\mathbb{P}$ for a random variable $X$ is given as an element of the set of values s.t.
$$\mathbb{P}(m_\mathbb{P}\geq X)\geq \frac{1}{2},\quad \mathbb{P}(m_\mathbb{P}\leq X)\geq \frac{1}{2}.$$
Here, the LHS in a local neighborhood $A$ becomes
$$\fint\limits_A \mathds{1}_{\{X\leq m_\mathbb{P}\}}\dd x.$$
The median also satisfies the continuous version of the variational characterization: $m_\mathbb{P}\in \argmin\limits_m \mathbb{E}[|X-m|]$. Usually, we will consider the infimum of the possible values of the median.
To prove the approximation, we will use the convergence result of \cite{BarlesSouganidis} with the techniques of \cite{IPS}. The latter theorems could also be applied in our case for the fixed kernels $B_r$ or $A_{\kappa, r}$ with fixed $\kappa$ but do not apply for varying kernels. Therefore, we will present the proof in our case which will also gives the exact time scales directly.

The approach of \cite{IPS} can be structured as follows. First, the monotonicity and a linear relabeling of the scheme must be verified. Next, the half-relaxed limits of our scheme must converge to the semi-continuous envelopes of the differential operator given by the viscosity solutions to level set mean curvature flow. By monotonicity, this consistency has to be verified only for smooth functions $\varphi\in C^\infty(D;\mathbb{R})$. Finally, the evolution must satisfy an a priori continuity estimate in time w.r.t.\ the initial datum. Combining these results, one obtains the uniform convergence.

To make these assumptions for the convergence theorem of \cite{BarlesSouganidis} (in our version \cite[Theorem~1.1]{IPS}) precise: We will require that our scheme is monotone and is translation invariant in the relabeling, i.e., for all $h>0, c\in\mathbb{R}$ and $ u,v\in \BUC(D)$ (see Proposition~\ref{Prop:Monotonicity}):
\begin{align}
   u\leq v &\Rightarrow G_h u\leq G_h v,\label{Monotonicity2}
   \\
    G_h(u+c)&=G_h u+c.\label{Monotonicity1}
\end{align}
In addition, the half-relaxed limits of the operator must converge to the upper- and lower semi-continuous envelopes of our continuous operator from the viscosity solution, i.e., for all $\varphi\in C^\infty(D)$:
\begin{align}
    \ulim\limits_{h\to 0}\frac{G_h\varphi-\varphi}{h}(x)&\leq F^*(\nabla\varphi(x), \nabla^2\varphi(x)),\label{Consistency1}\\
    \llim\limits_{h\to 0}\frac{G_h\varphi-\varphi}{h}(x)&\geq F_*(\nabla\varphi(x), \nabla^2\varphi(x)).\label{Consistency2}
\end{align}
We will show these two properties in Proposition~\ref{Prop:Consistency}. In this assumption, we have used the following two definitions.
\begin{defi}[Half-relaxed limits and semi-continuous envelopes]
    We define the upper and lower half-relaxed limit of a sequence $f_h$ via
    $$\ulim\limits_{h\to 0} f_h(x)\coloneqq \lim\limits_{\varepsilon\to 0}\sup\left\{f_h(y): h<\varepsilon, y\in B_\varepsilon(x)\right\}$$
    and
    $$\llim\limits_{h\to 0} f_h(x)\coloneqq \lim\limits_{\varepsilon\to 0}\inf\left\{f_h(y): h<\varepsilon, y\in B_\varepsilon(x)\right\}$$
    respectively.

    Moreover, the upper semi-continuous envelope of a function $f(x)$ is defined as 
    $$f^*(x)\coloneqq \lim\limits_{\varepsilon\to 0}\sup\{f(y):y\in B_\varepsilon(x)\}$$
    and the lower semi-continuous envelope as
    $$f_*(x)\coloneqq \lim\limits_{\varepsilon\to 0}\inf\{f(y):y\in B_\varepsilon(x)\},$$
    respectively.
\end{defi}
Recall, that $F$ is given by the level set Laplacian or, for singular points, its eigenvalues as in Definition~\ref{Def:F}.

Finally, the evolution must satisfy a continuity in time with respect to the initial datum. More precisely, for all $g\in \BUC(D), h>0$ there exists a modulus of continuity function $\omega\in C(\mathbb{R}_{\geq 0};\mathbb{R}_{\geq 0})$ depending only on $\omega_g$ and satisfying $\omega(0)=0$ s.t.\ for all $t\in [0,T]$, we have
\begin{align}
    \norm{Q_{h,t}^\mathbb{P}g-g}_{\infty}&\leq \omega(t).\label{ContinuityTime}
\end{align}
Here, $Q_{h,t}^\mathbb{P}$ is the piecewise constant-in-time evolution of our space-continuous non-local scheme defined by Equation~\eqref{Eq:DefQ}.
This property is proved as Proposition~\ref{Prop:TimeContinuity} using \cite[Lemma~3.2]{IPS} (our Lemma~\ref{Lemma:3.2}).

We will prove these assumptions and use them for the following theorem. This theorem demonstrates the convergence to the time continuous solution.

\begin{lemma}[Uniform convergence of the scheme {\cite[Theorem~1.1]{IPS}}]
    Let the initial data $g$ be in $\BUC(D)$ and $u\in \BUC(D\times [0,\infty))$ be the unique viscosity solution of (time-rescaled) level set mean curvature flow. Moreover, assume that \eqref{Monotonicity2}, \eqref{Monotonicity1}, \eqref{Consistency1}, \eqref{Consistency2}, and \eqref{ContinuityTime} hold. 

    Then, for fixed $T>0$, we have uniform convergence in $h\to 0$:
    $$Q^\mathbb{P}_{h,\cdot}g\to u \text{ uniformly in }D\times [0,T].$$
    \label{UniformConvergence}
\end{lemma}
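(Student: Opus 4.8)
\emph{Approach.} This lemma is the abstract convergence theorem of Barles--Souganidis~\cite{BarlesSouganidis}, in the form \cite[Theorem~1.1]{IPS} tailored to non-local approximation schemes; the five structural hypotheses \eqref{Monotonicity2}, \eqref{Monotonicity1}, \eqref{Consistency1}, \eqref{Consistency2}, \eqref{ContinuityTime} are exactly its inputs, and they are supplied by Propositions~\ref{Prop:Monotonicity}, \ref{Prop:Consistency} and \ref{Prop:TimeContinuity}. So the plan is to run the Barles--Souganidis argument. Define the half-relaxed limits
$$\overline u(x,t):=\ulim_{h\to 0}Q^\mathbb{P}_{h,t}g(x),\qquad \underline u(x,t):=\llim_{h\to 0}Q^\mathbb{P}_{h,t}g(x).$$
These are finite since \eqref{Monotonicity2} together with \eqref{Monotonicity1} yields the a priori bound $\|Q^\mathbb{P}_{h,t}g\|_\infty\le\|g\|_\infty$, and by construction $\overline u$ is upper semicontinuous, $\underline u$ lower semicontinuous, with $\underline u\le\overline u$ everywhere on $D\times[0,T]$.

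\emph{Initial datum.} First I would use the time-continuity estimate \eqref{ContinuityTime}, whose modulus $\omega$ is independent of $h$, to get $|Q^\mathbb{P}_{h,t}g(x)-g(x)|\le\omega(t)$; letting $h\to0$ this gives $\overline u(\cdot,0)=\underline u(\cdot,0)=g$, and moreover the family $\{Q^\mathbb{P}_{h,\cdot}g\}_h$ is equicontinuous in time near $t=0$, uniformly in $h$, which will be needed to upgrade the mode of convergence at the end.

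\emph{Sub- and supersolution property.} This is the core step. At an interior point $(x_0,t_0)$ where $\overline u-\varphi$ has a strict local maximum for a smooth test function $\varphi$, one perturbs $\varphi$ by a constant using the relabeling invariance \eqref{Monotonicity1}, applies monotonicity \eqref{Monotonicity2} to compare $Q^\mathbb{P}_{h}g$ with $\varphi$ up to the scheme error, and passes to the limit along a maximizing sequence via the consistency inequality \eqref{Consistency1}. This shows $\overline u$ is a viscosity subsolution of the time-rescaled level set equation $\partial_t u=c_A F(\nabla u,\nabla^2 u)$ in the sense of Definition~\ref{def:viscositysolution}; symmetrically \eqref{Consistency2} gives that $\underline u$ is a viscosity supersolution. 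The only delicate point is the singular set $\{\nabla\varphi=0\}$, where $F$ is set-valued (Definition~\ref{Def:F}); but the envelope formulation built into \eqref{Consistency1}--\eqref{Consistency2} is precisely what makes the standard argument go through unchanged there.

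\emph{Comparison and conclusion; main obstacle.} Finally I would invoke the comparison principle for level set mean curvature flow (Evans--Spruck~\cite{ESI}, Chen--Giga--Goto~\cite{ChenGigaGoto}), valid for this degenerate, geometric, fully nonlinear equation: since $\overline u$ and $\underline u$ agree at $t=0$, comparison forces $\overline u\le\underline u$, hence $\overline u=\underline u=:u$, a continuous function which is then the unique viscosity solution with datum $g$. Equality of the two half-relaxed limits, together with the uniform bound and the $h$-uniform equicontinuity near $t=0$, turns $Q^\mathbb{P}_{h,\cdot}g\to u$ into uniform convergence on $D\times[0,T]$. No new estimate is needed here — the genuine analytic work sits in the propositions verifying the hypotheses — so the only real obstacle internal to this proof is the invocation of the comparison principle for the singular level set operator, which also explains why, on domains other than $\mathbb{T}^d$, the appropriate boundary conditions must be folded in (cf.\ Section~\ref{Program}) before the same scheme closes.
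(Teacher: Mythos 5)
The paper does not reprove this lemma: it is invoked verbatim as Theorem~1.1 of \cite{IPS}, which is itself the Barles--Souganidis convergence scheme adapted to non-local operators, and your proposal is precisely that argument (half-relaxed limits, sub/supersolution property via monotonicity + relabeling + consistency, agreement at $t=0$ from the time-continuity estimate, comparison principle for the geometric equation, equality of envelopes upgraded to uniform convergence on the compact set $D\times[0,T]$). This is the same approach the paper points to, so the proposal is correct and consistent with what the paper relies on.
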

\vspace*{-1.5em}
\begin{remark}
    The time rescaling by $\frac{1}{c_A}$ depends on the local neighborhood in the scheme. More precisely, for the kernel $A_{\kappa, 1}$, the time derivative $\partial_t$ becomes $2(d+1)\frac{1-\kappa^{d-1}}{1-\kappa^{d+1}}\partial_t$. In the case of the ball, this constant becomes $2(d+1)$ and for shrinking annuli (the sphere) $2(d-1)$.
    \label{Rem:constantCA}
\end{remark}

The main ingredient in the proof of the consistency at singular points and for \eqref{ContinuityTime}, will be the following lemma.

\begin{lemma}[{\cite[Lemma~3.2]{IPS}}]
\label{Lemma:3.2}
   There exist constants $C>0, \delta\in (0,h_0)$ s.t.\ for the test functions $\psi_
   \pm(x)\coloneqq \pm\sqrt{1+|x|^2}$ and all $x\in D, h\in (0,\delta]$, we have:
   \begin{align*}
       G_h\psi_+(x)&\leq \psi_+(x)+Ch,\\
       G_h\psi_-(x)&\geq \psi_-(x)-Ch.
   \end{align*}
\end{lemma}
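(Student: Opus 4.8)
The plan is to reduce both one‑sided bounds to a single statement about the continuous median of $\psi_+$ and then prove that statement by a second‑order Taylor expansion that uses the reflection symmetry of the stencil $A_r$. Recall that in this (mean‑field) section $G_h\varphi(x)$ is the infimum of the set of distributional medians $m_{\mathbb P}$ of $\varphi$ over $A_r(x)$ with $r=\sqrt h$, and that $A_r$ (a ball or an annulus $B_r\setminus B_{\kappa r}$) is invariant under $z\mapsto-z$. Two elementary observations organize the argument. First, $\psi_+$ has a globally bounded Hessian: differentiating twice gives $\nabla^2\psi_+(x)=(1+|x|^2)^{-1/2}\bigl(I-\tfrac{x\otimes x}{1+|x|^2}\bigr)$, whose operator norm equals $(1+|x|^2)^{-1/2}\le 1$ for every $x$. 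Second, $\psi_-=-\psi_+$, and since the set of distributional medians of $-\varphi$ over $A_r(x)$ is the reflection about $0$ of that of $\varphi$, one has $G_h\psi_-(x)=-\sup\{\text{medians of }\psi_+\text{ over }A_r(x)\}$. Hence it suffices to prove that the \emph{entire} median set of $\psi_+$ over $A_r(x)$ is contained in $(-\infty,\psi_+(x)+h]$, uniformly in $x$: this yields $G_h\psi_+(x)\le\psi_+(x)+h$ and $G_h\psi_-(x)\ge-(\psi_+(x)+h)=\psi_-(x)-h$ at once.

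For the key estimate I would fix $x$, set $p:=\nabla\psi_+(x)$, and apply Taylor's formula with Lagrange remainder to $t\mapsto\psi_+(x+t(y-x))$: for $y\in A_r(x)\subseteq B_r(x)$,
\[
\psi_+(y)\ \le\ \psi_+(x)+p\cdot(y-x)+\tfrac12\|\nabla^2\psi_+\|_{L^\infty}\,|y-x|^2\ \le\ \psi_+(x)+p\cdot(y-x)+\tfrac12 h .
\]
Thus $\{y\in A_r(x):\psi_+(y)\ge\psi_+(x)+h\}\subseteq\{y\in A_r(x):p\cdot(y-x)\ge\tfrac12 h\}$. Reflecting $A_r(x)$ through its center $x$ exchanges $\{p\cdot(y-x)>0\}$ with $\{p\cdot(y-x)<0\}$, so for $p\neq0$ each of these half‑regions has measure exactly $\tfrac12|A_r(x)|$, while the slab $\{0<p\cdot(y-x)<\tfrac12 h\}$ meets $A_r(x)$ in a set of \emph{strictly positive} measure, because the hyperplane $\{p\cdot(y-x)=0\}$ passes through the interior of $A_r(x)$ whether $A_r$ is a ball or an annulus (for the annulus take $y=x+\tfrac{(1+\kappa)r}{2}v$ with $v\perp p$, $|v|=1$, using $d\ge2$). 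Consequently
\[
\bigl|\{y\in A_r(x):\psi_+(y)\ge\psi_+(x)+h\}\bigr|\ <\ \tfrac12|A_r(x)| ,
\]
and the same strict inequality holds with $\psi_+(x)+h$ replaced by any larger threshold. Writing $L:=\psi_+(x)+h$, this says $\fint_{A_r(x)}\mathds{1}_{\{\psi_+\ge L'\}}\,\dd y<\tfrac12$ for every $L'\ge L$; hence no $L'\ge L$ satisfies the defining inequality of a median, so every median of $\psi_+$ over $A_r(x)$ is $<L$, which is the claim. The case $p=0$ is immediate, since then $\psi_+\le\psi_+(x)+\tfrac12 h$ on all of $A_r(x)$.

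The constant produced this way is $C=1$, uniform in $x$ because it comes only from the global bound $\|\nabla^2\psi_+\|_{L^\infty}\le1$; one then fixes $\delta\in(0,h_0)$ small, and, in the case $D=\mathbb{T}^d$, small enough that $A_{\sqrt h}(x)$ is realized as a genuine Euclidean ball or annulus for $h\le\delta$ — which is the only place the torus structure enters, and the implicit meaning of $|\cdot|$ in the statement. I do not expect a serious obstacle here: the two points that need a little care are (i) upgrading the measure inequality "$\le\tfrac12$" to the strict "$<\tfrac12$" required to conclude membership of the whole median set below $L$, for which the slack $\tfrac12 h$ between the Taylor remainder and the threshold $h$ is exactly what is needed; and (ii) the cancellation of the first‑order term $p\cdot(y-x)$ under the reflection symmetry of $A_r$, which is the actual mechanism behind the $\mathcal O(h)$ bound and which also explains why annuli behave just like balls.
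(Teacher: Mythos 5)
Your proof is correct, and it captures the same mechanism that drives the paper's proof — the central reflection symmetry of $A_r(x)$ cancels the first‑order term in the expansion of $\psi_+$ around $x$, leaving an $\mathcal O(h)$ remainder — but the route is genuinely different in execution. The paper expands $\psi_+$ exactly, using the algebraic identity $\psi_+(x_0+ry)^2-\psi_+(x_0)^2=|x_0+ry|^2-|x_0|^2=2r\,x_0\cdot y+r^2|y|^2$, rescales, rotates so that $x_0\in\spn(e_d)$, and then controls the median by sandwiching the rescaled function $f(y)=|y|^2+\tfrac{2x_0}{r}y_d$ between half‑spaces $\{y_d\gtrless c\}$ whose measures match under the reflection $y_d\mapsto -y_d$. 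You replace this specific computation by a coordinate‑free Taylor estimate built on the global bound $\|\nabla^2\psi_+\|_{L^\infty}\le 1$, obtaining $\psi_+(y)\le\psi_+(x)+p\cdot(y-x)+\tfrac12 h$ for $y\in B_r(x)$ and then using the symmetry of $A_r$ to show the upper half‑space $\{p\cdot(y-x)\ge\tfrac12 h\}$ has measure strictly less than $\tfrac12|A_r(x)|$. This is cleaner (no rotation, no case distinction in $x_0$), transparently generalizes to any test function with uniformly bounded Hessian, and makes explicit the step — which the paper treats more implicitly — that a strict measure inequality $\fint_{A_r(x)}\mathds 1_{\{\psi_+\ge L'\}}<\tfrac12$ for all $L'\ge\psi_+(x)+h$ bounds the \emph{entire} median set, which is exactly what is needed since the estimate for $\psi_-=-\psi_+$ requires controlling the supremum of that set. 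Both arguments yield the same constant $C=1$. Two small remarks: the median of $\psi_+$ over $A_r(x)$ is in fact unique (the CDF is continuous and strictly increasing on the range), so the sup/inf distinction you are careful about does not actually bite here, but it is good hygiene; and your check that the hyperplane $\{p\cdot(y-x)=0\}$ meets the interior of the annulus uses $d\ge 2$, which matches the paper's standing assumption.
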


Since $\psi$ behaves like $|x|$, this lemma enables us to prove the following.
\begin{lemma}[Continuity in time {\cite[Lemma~3.3]{IPS}}]
\label{Prop:TimeContinuity}
    Let $g\in \BUC(D)$ be a given initial datum. Then there exist $\delta\in (0,h_0)$ and a modulus of continuity function $\omega\in C([0,\infty);[0,\infty))$ satisfying $\omega(0)=0$ and depending only on the modulus of continuity $\omega_g$ of $g$ s.t.\ for all $ x\in D$, $t\geq 0$, and $h\in (0,\delta):$
    $$|Q_{h,t}^\mathbb{P}g(x)-g(x)|\leq \omega(t).$$
\end{lemma}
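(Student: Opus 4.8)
The plan is to sandwich the space-continuous non-local evolution $Q_{h,\cdot}^{\mathbb{P}}g$ between two barriers built from the functions $\psi_\pm$ of Lemma~\ref{Lemma:3.2}, and to propagate these barriers through the scheme using the monotonicity \eqref{Monotonicity2}, the additive invariance \eqref{Monotonicity1}, and the positive $1$-homogeneity of $G_h=m_{\mathbb{P}}$ (immediate from the definition of the distributional median). This is the classical Barles--Souganidis continuity-in-time estimate; Lemma~\ref{Lemma:3.2} supplies its only non-routine ingredient.

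First I would construct the barriers. Fix $x_0\in D$ and $\varepsilon>0$. Since $\omega_g(0^+)=0$, pick $\rho_\varepsilon>0$ with $\omega_g(\rho_\varepsilon)\le\varepsilon$, set $M\coloneqq\sup_{s\ge 0}\omega_g(s)$ (finite since $g$ is bounded, and a function of $\omega_g$ alone), $\gamma_\varepsilon\coloneqq\sqrt{1+\rho_\varepsilon^2}-1>0$, $L_\varepsilon\coloneqq M/\gamma_\varepsilon$, and define
$$\phi^+_{x_0,\varepsilon}(x)\coloneqq g(x_0)+\varepsilon+L_\varepsilon\bigl(\psi_+(x-x_0)-1\bigr),\qquad \phi^-_{x_0,\varepsilon}\coloneqq 2g(x_0)-\phi^+_{x_0,\varepsilon}.$$
One checks $\phi^-_{x_0,\varepsilon}\le g\le\phi^+_{x_0,\varepsilon}$ on $D$: for $|x-x_0|\le\rho_\varepsilon$ use $\psi_+\ge 1$ and $|g(x)-g(x_0)|\le\omega_g(\rho_\varepsilon)\le\varepsilon$; for $|x-x_0|>\rho_\varepsilon$ use $L_\varepsilon(\psi_+(x-x_0)-1)\ge L_\varepsilon\gamma_\varepsilon=M\ge|g(x)-g(x_0)|$. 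Note $\phi^\pm_{x_0,\varepsilon}(x_0)=g(x_0)\pm\varepsilon$. On $D=\mathbb{T}^d$ the translate $\psi_+(\cdot-x_0)$ and Lemma~\ref{Lemma:3.2} are to be read after lifting $g$ $\mathbb{Z}^d$-periodically to $\mathbb{R}^d$, where the median filter acts equivariantly in space and preserves periodicity.

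Next I would propagate. Spatial translation invariance of $G_h$ together with Lemma~\ref{Lemma:3.2} give $G_h(\psi_+(\cdot-x_0))\le\psi_+(\cdot-x_0)+Ch$ for $h\in(0,\delta]$, so by \eqref{Monotonicity1} and homogeneity $G_h\phi^+_{x_0,\varepsilon}\le\phi^+_{x_0,\varepsilon}+L_\varepsilon Ch$, and symmetrically $G_h\phi^-_{x_0,\varepsilon}\ge\phi^-_{x_0,\varepsilon}-L_\varepsilon Ch$. Starting from $g\le\phi^+_{x_0,\varepsilon}$, an induction on $j$ using \eqref{Monotonicity2} and \eqref{Monotonicity1} yields $G_h^jg\le\phi^+_{x_0,\varepsilon}+jL_\varepsilon Ch$; evaluating at $x_0$ and using \eqref{Eq:DefQ} with $jh\le t$ for $t\in[jh,(j+1)h)$ gives $Q_{h,t}^{\mathbb{P}}g(x_0)\le g(x_0)+\varepsilon+L_\varepsilon Ct$, and the lower barrier gives the matching bound from below. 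Since $x_0$ is arbitrary, $\norm{Q_{h,t}^{\mathbb{P}}g-g}_\infty\le\varepsilon+L_\varepsilon Ct$ for all $\varepsilon>0$, $t\ge 0$, $h\in(0,\delta]$. Finally, setting $\omega(t)\coloneqq\inf_{\varepsilon>0}(\varepsilon+L_\varepsilon Ct)$ — a concave, nondecreasing function with $\omega(0)=0$ and $\limsup_{t\to 0^+}\omega(t)\le\varepsilon$ for every $\varepsilon>0$, hence a genuine modulus of continuity depending only on $\omega_g$ and the scheme constant $C$ — gives the assertion, with $\delta$ as in Lemma~\ref{Lemma:3.2}.

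The main obstacle is Lemma~\ref{Lemma:3.2} itself: controlling one step of the scheme on the unbounded, asymptotically linear barriers $\psi_\pm$. Granting it, the only points needing care above are (i) choosing all barrier constants so that they depend on $g$ only through $\omega_g$, which is what the substitution $M=\sup_s\omega_g(s)$ achieves, and (ii) interpreting $\psi_\pm(\cdot-x_0)$ and the scheme equivariantly on the flat torus.
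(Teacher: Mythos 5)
Your proposal is correct and follows essentially the same Barles--Souganidis barrier argument that the paper uses: you write $g$ between a pair of $\psi_\pm$-based barriers with coefficient $C_\varepsilon$, propagate the barriers using Lemma~\ref{Lemma:3.2} together with monotonicity, additive invariance, and positive $1$-homogeneity of the median, and then infimize over $\varepsilon$ to build the modulus $\omega$. The paper's proof phrases the inequality $\omega_g(|x-y|)\le\varepsilon+C_\varepsilon(\psi(x-y)-1)$ in two variables and applies $G_h$ in $x$ with $y$ fixed, whereas you anchor the barrier at a point $x_0$; these are the same computation. The only thing you add is an explicit choice $C_\varepsilon=L_\varepsilon=\sup_s\omega_g(s)/(\sqrt{1+\rho_\varepsilon^2}-1)$, which makes visible (where the paper only asserts) that $C_\varepsilon$, and hence $\omega$, depends on $g$ only through $\omega_g$. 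You are also right to flag that positive $1$-homogeneity of $G_h$ is genuinely used here in addition to \eqref{Monotonicity2} and \eqref{Monotonicity1}; the paper invokes this scaling only implicitly in this proof, though it does record $G_h(au+c)=aG_hu+c$ later in the proof of Lemma~\ref{tauEstimate}.
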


The following proof is due to \cite{IPS}.

\begin{proof}
    Let $g\in \BUC(D)$ given and fix $\varepsilon>0$. Then there exists $C_\varepsilon>0$ s.t.\ for all $x,y\in D$
    $$|g(x)-g(y)|\leq \omega_g(|x-y|)\leq \varepsilon+C_\varepsilon(\psi(x-y)-1)$$
    with $\psi(x)\coloneqq \sqrt{1+|x|^2}$.
    This implies
    $$-\varepsilon-C_\varepsilon(\psi(x-y)-1)+g(y)\leq g(x)\leq g(y)+\varepsilon+C_\varepsilon(\psi(x-y)-1).$$
    Now, we can apply $G_h$ (in $x$) to achieve with Lemma~\ref{Lemma:3.2}
    \begin{align*}
        -\varepsilon-C_\varepsilon(\psi(x-y)-1)-CC_\varepsilon h+g(y)\leq G_hg(x),\\
        G_hg(x)\leq g(y)+\varepsilon+C_\varepsilon(\psi(x-y)-1)+CC_\varepsilon h.
    \end{align*}
    An iteration and plugging in $y=x$ yields
    $$-\varepsilon-CC_\varepsilon t+g(x)\leq Q_{h,t}^\mathbb{P}g(x)\leq g(x)+\varepsilon+CC_\varepsilon t.$$
    The function $\omega(t)\coloneqq \inf\limits_{\varepsilon>0}\{\varepsilon+CC_\varepsilon t\}$ has the desired properties, continuity and vanishing at zero, and fulfills our estimate.
\end{proof}

\begin{prop}[Monotonicity and relabeling]
    \label{Prop:Monotonicity}
    The evolution $G_h u$ satisfies \eqref{Monotonicity2} and \eqref{Monotonicity1}.
\end{prop}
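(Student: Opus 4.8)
The plan is to verify the two properties directly from the definition of $G_h$ as the (distributional) median over the neighborhood $A_r(x)$. Recall that for a function $u \in \BUC(D)$, one step of the continuous scheme is $G_h u(x) = m_\mathbb{P}(u\restriction_{A_r(x)})$, the median of the random variable $u(Y)$ where $Y$ is uniformly distributed on $A_r(x)$. We use the variational characterization $m_\mathbb{P} \in \argmin_m \fint_{A_r(x)} |u(y) - m| \dd y$, together with the convention that we take the infimum of the set of minimizers (equivalently, the smallest value $m$ with $\fint_{A_r(x)} \mathds{1}_{\{u(y) \le m\}} \dd y \ge \tfrac12$). With this convention $G_h u(x)$ is a well-defined real number for every $x$.

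\emph{Relabeling \eqref{Monotonicity1}.} For any constant $c \in \mathbb{R}$, the distribution of $u(Y)+c$ is just the distribution of $u(Y)$ shifted by $c$, so $\{m : \fint_{A_r(x)} \mathds{1}_{\{u(y)+c \le m\}} \dd y \ge \tfrac12\} = c + \{m : \fint_{A_r(x)} \mathds{1}_{\{u(y) \le m\}} \dd y \ge \tfrac12\}$, and taking infima gives $G_h(u+c)(x) = G_h u(x) + c$ pointwise in $x$. (Equivalently, $\fint_{A_r(x)} |(u(y)+c) - (m+c)|\dd y = \fint_{A_r(x)} |u(y) - m| \dd y$, so the minimizer sets are related by the shift $m \mapsto m+c$.)

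\emph{Monotonicity \eqref{Monotonicity2}.} Suppose $u \le v$ pointwise. Fix $x \in D$. For any threshold $m$ we have $\{u(y) \le m\} \supseteq \{v(y) \le m\}$, hence $\fint_{A_r(x)} \mathds{1}_{\{u(y) \le m\}} \dd y \ge \fint_{A_r(x)} \mathds{1}_{\{v(y) \le m\}} \dd y$. Therefore, whenever the right-hand side reaches $\tfrac12$, so does the left-hand side; this shows that the infimum defining $G_h u(x)$ is no larger than the infimum defining $G_h v(x)$, i.e.\ $G_h u(x) \le G_h v(x)$. Since $x$ was arbitrary, $G_h u \le G_h v$. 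I would also remark that this argument is insensitive to the choice of stencil $A_r = B_r$ or $A_r = A_{\kappa,r}$ and to whether $\mathbb{P}$ is replaced by the empirical measure $\mathbb{P}_N$ of a point process, so the same proof yields monotonicity and relabeling for the discrete scheme $G_h^N$ as well, and it then passes to the piecewise-constant-in-time evolution $Q_{h,t}$ by iteration (composition of monotone, translation-equivariant maps).

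The argument is essentially routine; the only genuine subtlety — and the one point I would be careful about — is non-uniqueness of the median. One must fix a selection rule (here, the infimum of the minimizer set) and check that \emph{both} properties hold for that fixed rule simultaneously; the monotonicity of $m \mapsto \fint \mathds{1}_{\{u \le m\}}$ in the sublevel-set inclusion is exactly what makes the infimum-selection compatible with the order, and translation-equivariance of the infimum under the shift $m \mapsto m+c$ handles the relabeling. No regularity beyond $u \in \BUC(D)$ is needed, since measurability of sublevel sets and finiteness of the relevant integrals over $A_r(x)$ is immediate.
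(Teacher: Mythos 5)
Your proof is correct and follows essentially the same route as the paper: both reduce the two properties of $G_h$ to the translation-equivariance and monotonicity of the median operation itself. The paper's version is terser (it simply states the two median identities without spelling out the CDF argument or the infimum selection rule), whereas your careful treatment of non-uniqueness via the infimum convention is a welcome clarification of a point the paper elides.
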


\begin{proof}
    This proposition follows immediately from the definition, since the median satisfies the same properties. If we consider $u+c$, then the median satisfies
    $$\med(u(x_1)+c,\dots, u(x_n)+c)=\med(u(x_1),\dots, u(x_n))+c.$$
    Similarly, the same holds for the monotonicity. Consider $u\leq v$, then 
    $$\med(u(x_1),\dots, u(x_n))\leq\med(v(x_1),\dots, v(x_n)).$$
\end{proof}

\begin{remark*}
    Both of these properties hold for the level set mean curvature flow. We even have a (monotone) relabeling property.
\end{remark*}

The stability of our scheme is a consequence of the maximum principle (monotonicity) and the relabeling property. Let $u\in \BUC(D)$ and $v$ be a smooth function approximating $u$ in $L^\infty(D)$. Then, there exists an $\varepsilon>0$ s.t.\ 
$$u+\varepsilon>v.$$ 
Hence, we also have for the evolution
$$G_h(u+\varepsilon)=u^1_h+\varepsilon>v^1_h.$$ 
By induction, this yields 
$$G^n_h(u+\varepsilon)=u^n_h+\varepsilon>v^n_h$$
and analogously
$$u^n_h-\varepsilon<v^n_h.$$ 
This implies that we can restrict ourselves to smooth functions for the consistency proof, since the approximation remains uniformly close in $L^\infty$ norm.\\
The same reasoning holds true for viscosity solutions to mean curvature flow.

Next, we will compute the median $m_\mathbb{P}$ pointwise for a given function and show that it is close to the level set Laplacian in a viscosity sense. Since we will use this for the consistency proof, we assume $\varphi\in C^\infty(D)$.

\begin{prop}[Consistency]
\label{Prop:Consistency}
    The evolution $G_h \varphi$ satisfies \eqref{Consistency1} and \eqref{Consistency2}.
\end{prop}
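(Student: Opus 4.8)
The plan is to compute the expansion of the distributional median $m_{\mathbb P}$ in a neighborhood $A_r(x)$ for a fixed smooth test function $\varphi$, keeping track of the $r^2$-term, and then divide by $h=r^2$ and pass to the half-relaxed limits. I would split into two regimes according to whether the gradient $\nabla\varphi(x)$ vanishes or not.

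\emph{Non-critical points.} Suppose $\nabla\varphi(x_0)\neq 0$. After subtracting the constant $\varphi(x_0)$ (allowed by the relabeling \eqref{Monotonicity1}), I Taylor-expand $\varphi(x_0+z)=\nabla\varphi(x_0)\cdot z+\tfrac12 z\cdot\nabla^2\varphi(x_0)z+o(r^2)$ for $|z|\le r$. The median $m$ over $A_r(x_0)$ is characterized by the balance condition $\fint_{A_r}\mathds 1_{\{\varphi(x_0+z)\le m\}}\dd z=\tfrac12$, i.e. the hyperplane-like region $\{\varphi\le m\}$ must cut the (symmetric) stencil $A_r$ into two equal halves. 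To leading order this forces $m=\mathcal O(r)$; writing the level surface $\{\varphi(x_0+\cdot)=m\}$ as a graph over the tangent hyperplane $\nabla\varphi(x_0)^\perp$ and using that $A_r$ is symmetric, the first-order term integrates to zero and one is left with a second-order condition: the signed volume cut by the quadratic correction must vanish. Carrying out the Gaussian-type integral over the annulus $A_{\kappa,r}$ gives, after rescaling $z=r\zeta$,
\[
\frac{m}{r^2}\;\longrightarrow\; c_A\Bigl(\Delta\varphi(x_0)-\frac{\nabla\varphi(x_0)}{|\nabla\varphi(x_0)|}\cdot\nabla^2\varphi(x_0)\frac{\nabla\varphi(x_0)}{|\nabla\varphi(x_0)|}\Bigr)=c_A\,F(\nabla\varphi,\nabla^2\varphi)(x_0),
\]
where $c_A$ is exactly the constant $\tfrac1{2(d+1)}\tfrac{1-\kappa^{d+1}}{1-\kappa^{d-1}}$ from the moment computation (with the stated limits for the ball and for shrinking annuli). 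Since $F$ is continuous at non-critical points, this limit is locally uniform and both half-relaxed limits in \eqref{Consistency1}, \eqref{Consistency2} equal $c_AF$, so the inequalities hold with equality.

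\emph{Critical points.} Suppose $\nabla\varphi(x_0)=0$. Now the quadratic form $\tfrac12 z\cdot\nabla^2\varphi(x_0)z$ is the leading term, so $m=\mathcal O(r^2)$ directly, and the balance condition becomes $\fint_{A_r}\mathds 1_{\{z\cdot\nabla^2\varphi(x_0)z\le 2m/r^2\cdot r^2\}}\dd z\to\tfrac12$ after rescaling. Diagonalizing $\nabla^2\varphi(x_0)=\operatorname{diag}(\lambda_1,\dots,\lambda_d)$, the value $s:=\lim 2m/r^2$ is the median of the random variable $\sum_i\lambda_i\zeta_i^2$ where $\zeta$ is uniform on the unit annulus. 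Here is where I expect the real subtlety: for the half-relaxed limits one must allow $x$ to vary near $x_0$ with $h\to0$ simultaneously, and near $x_0$ the gradient of $\varphi$ is small but nonzero, so the direction picked out by the constraint can be an arbitrary unit vector $\xi$ — this is precisely why the limiting operator is set-valued on the interval $[\Delta\varphi-\lambda_d,\Delta\varphi-\lambda_1]$. I would show that $s$ always lies in this interval (a convexity/extremal-value argument: fixing the "missing" eigenvalue direction $\xi$, the remaining $(d-1)$-dimensional trace $\sum_{i}\lambda_i-\xi\cdot\nabla^2\varphi\,\xi$ ranges over exactly $[\Delta\varphi-\lambda_d,\Delta\varphi-\lambda_1]$), hence $\ulim\frac{G_h\varphi-\varphi}{h}\le c_A\sup F=c_A F^*$ and $\llim\frac{G_h\varphi-\varphi}{h}\ge c_A\inf F=c_A F_*$, which is \eqref{Consistency1}--\eqref{Consistency2}.

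The main obstacle is the critical-point analysis combined with the half-relaxed (not pointwise) limit: one must make uniform the transition between the "gradient dominates" and "Hessian dominates" regimes as $(x,h)\to(x_0,0)$ along arbitrary sequences, and verify that every subsequential value of $\frac{m-\varphi}{r^2}$ is trapped between $c_A F_*$ and $c_A F^*$. A clean way to organize this is to bound the median from above and below by medians of the affine-plus-quadratic model in which the linear part is replaced by an arbitrary unit direction, and then to note that sup/inf over those directions reproduce exactly the envelopes $F^*$ and $F_*$; the $o(r^2)$ Taylor remainder is controlled uniformly on the compact domain $D$ because $\varphi\in C^\infty(D)$ with $D=\mathbb T^d$ compact.
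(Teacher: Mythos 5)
Your non-critical-point computation ($\nabla\varphi(x_0)\neq 0$) is on the right track and essentially matches the paper's proof: Taylor expansion, symmetry of the stencil, a moment integral over the annulus yielding $c_A=\frac{1}{2(d+1)}\frac{1-\kappa^{d+1}}{1-\kappa^{d-1}}$, and local uniformity because the error only depends on $|\nabla\varphi|^{-1}$.

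The critical-point analysis, however, has a genuine gap. You claim that the rescaled median $s=\lim \widetilde m_{\mathbb P}$ of the pure quadratic model always lies in $c_A[\Delta\varphi-\lambda_d,\Delta\varphi-\lambda_1]$. This is false for the ball. Take $d=2$, $A_r=B_r$, $\varphi(z)=|z|^2$; then $\nabla^2\varphi=2I$, $\lambda_1=\lambda_2=2$, $\Delta\varphi=4$, so the target interval is the singleton $c_A\{2\}=\{\tfrac13\}$ (with $c_A=\tfrac16$), whereas the median of $|z|^2$ over the unit ball satisfies $|B_{\sqrt m}|/|B_1|=m=\tfrac12$, i.e. $\widetilde m_{\mathbb P}=\tfrac12\notin\{\tfrac13\}$. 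This is precisely the phenomenon recorded in the remark following the proof in the paper: pointwise convergence at $\nabla u=0$ fails for the ball kernel even in the relaxed sense. Your proposed "convexity/extremal value" argument therefore cannot close the gap. The ``transition between gradient-dominated and Hessian-dominated regimes'' that you flag as the main obstacle is indeed the problem, but it cannot be resolved by trapping the median of $\sum\lambda_i\zeta_i^2$ in the interval, because the non-degenerate consistency estimate's error term blows up like $|\nabla\varphi|^{-1}$ and there is no uniform crossover.

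What the paper does instead (following \cite{IPS}, ultimately a Barles--Georgelin-type reduction) is to observe that at critical points of a degenerate parabolic operator one only needs to verify consistency against the restricted family of test functions $\varphi(x)=c|x-x_0|^4$, for which $\nabla\varphi(x_0)=0$ and $\nabla^2\varphi(x_0)=0$, hence $F_*(0,0)=F^*(0,0)=0$. Writing $\varphi=c(\psi^2-1)^2$ with $\psi(x)=\sqrt{1+|x-x_0|^2}$ and using that $G_h$ commutes with monotone relabelings and that $G_h\psi\le\psi+Ch$ (Lemma~\ref{Lemma:3.2}), one gets $G_h\varphi\le\varphi+cC\varepsilon h$ in $B_\varepsilon(x_0)$, which delivers $\ulim (G_h\varphi-\varphi)/h\le 0$ directly without any analysis of the Hessian-only median. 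If you want to make your proposal work, this reduction is the missing idea; the direct median computation at a genuine critical point with nonzero Hessian does not give the required bound.
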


\begin{proof}
    The proof is split in two parts. First, we consider the non-degenerate case $\nabla \varphi\neq 0$. For this, we take a few simplifications and reformulations to compute the median explicitly up to a small error. Afterwards, we split the domain around the median level set and compute the median. In this computation a constant is involved which we calculate explicitly in the last step. The other case $\nabla \varphi=0$ is done by a reduction to an explicit function computation.
    
    \textbf{Next, we will assume that $\nabla \varphi\neq 0$}:\\
    Simplification: Fix $x\in D$ and let $A_r$ be the annulus $B_r\setminus B_{\kappa r}$ for $0\leq \kappa(r)<1$. With $\kappa=0$, we recover the ball, $\kappa\equiv \text{const.}$ represents the case of a fixed annulus and $\kappa\to 1$ the case of shrinking annuli. Now, we construct a function $\varphi$ with a few simplifications as described in Figure~\ref{Fig:Simplifications}. The median only changes in a linear way with these. To be precise, we shift and rotate the space by a relabeling as our scheme and MCF are translation and rotation invariant. Additionally, we normalize the function by a shift and scaling in the range and approximate it locally by its Taylor expansion.
    \begin{figure}[!ht]
    \label{Fig:Simplifications}
    \centering
    \begin{tabular}{ |c|p{60mm}| } 
    \hline
    Simplification & effect \\
    \hline\hline
    translate space & $x_0=0$, \newline $A_r=B_r(0)\setminus B_{\kappa r}(0),$\\
    relabel: $\varphi+c$ & $ \varphi(0)=0,$\\
    rotate space & $\nabla \varphi(0)\in \spn (e_d),$\\
    relabel: $cu$ & $\nabla \varphi(0)=e_d,$\\
    approximate by Taylor & $\varphi=a_{ij}x_ix_j+x_d+b_ix_ix_d+b_dx_d^2$\newline (up to $\mathcal{O}(r^3)$).\\
    \hline
    \end{tabular}
    \caption{The simplifications we employ.}
    \end{figure}

    In this proof, we use Einstein's notation to sum over repeated indices $i,j$ from $1$ up to $d-1$.
    
    Here, the rotation does not change $A_r$ due to its rotational symmetry. Note that the constant $c$ in the second relabeling depends on $|\nabla \varphi|$ and blows up as the gradient vanishes. Thus, we only achieve local convergence results.
    
    In this setting, we have $F(\nabla \varphi, \nabla^2\varphi)=|\nabla \varphi|\nabla\cdot \frac{\nabla \varphi}{|\nabla \varphi|}=2\sum\limits_{i=1}^{d-1}a_{ii}$.
    
    The median $m_\mathbb{P}$ is given by the equation (by Sard's theorem the level sets are negligible if $|\nabla \varphi|>0$):
    \begin{align*}
        \fint\limits_{A_r(0)}\mathds{1}_{\{\varphi(x)<m_\mathbb{P}\}}\dd x=\frac{1}{2}.
    \end{align*}
    By a rescaling/normalization ($x\mapsto rx$) and another relabeling for 
    \begin{align*}
        \widetilde m_\mathbb{P}&\coloneqq \frac{m_\mathbb{P}}{r^2},\\
        f(x)&\coloneqq a_{ij}x_ix_j+\frac{x_d}{r}+b_ix_ix_d+b_dx_d^2,
    \end{align*}
    we get:
    \begin{align*}
        \fint\limits_{A_1(0)}\mathds{1}_{\{f(x)<\widetilde m_\mathbb{P}\}}\dd x=\frac{1}{2}.
    \end{align*}
    Splitting around the median level set: Let $C\coloneqq \sum\limits_{i,j=1}^{d-1}|a_{ij}|+B>0, B\coloneqq \sum\limits_{i=1}^d|b_i|\geq 0$. With these constants, one can see $\left|f-\frac{x_d}{r}\right|\leq C$ and thus 
    $$f\in \left[\frac{x_d}{r}-C, \frac{x_d}{r}+C\right].$$
    Next, we want to use the symmetry of $A_1$ and the fact that $r$ is small, to restrict the rescaled $x_d$ to an interval of size $\mathcal{O}(r)$ for the computation of the median. Let $r$ be small enough (such that $6Cr<1$), define $b\coloneqq 3Cr$ and observe
    \begin{align*}
        \{x_d>4Cr\}&\subseteq \{f>3C\} \subseteq \{x_d>2Cr\}\\
        \Rightarrow \mathbb{P}(\{x_d>4Cr\})&\leq \mathbb{P}(\{f>3C\})\leq \mathbb{P}(\{x_d>2Cr\})\\
        \Rightarrow \mathbb{P}(\{x_d<-4Cr\})&\leq \mathbb{P}(\{f>3C\})\leq \mathbb{P}(\{x_d<-2Cr\})\\
        \Rightarrow \mathbb{P}(\{f<-5C\}))&\leq \mathbb{P}(\{f>3C\})\leq \mathbb{P}(\{f<-C\}).        
    \end{align*}
    Hence, by monotonicity and continuity ($|\nabla f|>0$) there exists an $a\in(-5Cr, -Cr)$ s.t.\ 
    $$\mathbb{P}(\{f<a\})=\mathbb{P}(\{f>b\}).$$
    Since $a<b$, $\widetilde m_\mathbb{P} f=\widetilde m_{\mathbb{P}\llcorner_{\{a<f<b\}}}f$. Therefore, we can conclude $|\widetilde m_\mathbb{P}|<5C$ as $|a|,|b|<5Cr$. This allows us to restrict the neighborhood $A_1$ to $A_1\cap \{|x_d|<6Cr\}$ since $\mathbb{P}(\{x_d<-6Cr\})=\mathbb{P}(\{x_d>6Cr\})$.
    
    A further relabeling of $x_d$ and the restriction yield the new condition with $\widetilde f(x)\coloneqq a_{ij}x_ix_j+x_d+rb_ix_ix_d+r^2b_dx_d^2$:
    \begin{align*}
        \fint\limits_{A_1(0)\cap \{|x_d|<6C\}}\mathds{1}_{\{\widetilde f(x)<\widetilde m_\mathbb{P}\}}\dd x=\frac{1}{2}.
    \end{align*}
    Now, we want to change the domain to an annulus-cylinder as this will simplify the calculations. In addition, we drop the constant $6C$ since it only rescales all calculations.

    For readability, we first show the proof for a ball neighborhood and then adapt the proof to the case of general annuli.
    
    The ball restricted to $\{|x_d|<r\}$ is contained in the cylinder (in direction $x_d$) with radius $1$ and contains the cylinder with radius $\sqrt{1-r^2}$. In the rescaled setting, the volume difference between both is 
    \begin{align*}
        2\omega_{d-1}\left(1-\sqrt{1-r^2}^{d-1}\right)=2\omega_{d-1}\frac{1-(1-r^2)^{d-1}}{1+\sqrt{1-r^2}^{d-1}}\in \mathcal{O}(r^{2(d-1)}).
    \end{align*}
    Thus, it suffices to show that $\widetilde f$ remains uniformly within an $\mathcal{O}(r)$ margin in a sufficiently large area. Then, the median only changes by this margin if the volume and values are changed. Let $B_1$ be the (cut) ball and $C_1$ the cylinder. Suppose that for $m$, the median of $f$ in $B_1$, we have
    \begin{align*}
    |B_1\Delta C_1|&\leq \delta_2,\\
    |\{f>m-\varepsilon\}\cap B_1|&\geq \frac{|B_1|}{2}+\delta_1,\\
    |\{f<m+\varepsilon\}\cap B_1|&\geq \frac{|B_1|}{2}+\delta_1.
    \end{align*}
    where $B_1\Delta C_1\coloneqq B_1\setminus C_1\cup C_1\setminus B_1$ denotes the symmetric difference.
    Thus,
    \begin{align*}
        |\{f>m-\varepsilon\}\cap C_1|&\geq |\{f>m-\varepsilon\}\cap B_1|-|B_1\setminus C_1|\\
        &\geq \frac{|B_1|}{2}+\delta_1-\delta_2\\
        &\geq \frac{|C_1|}{2}-\frac{|C_1\setminus B_1|}{2}-\delta_2+\delta_1\\
        &\geq \frac{|C_1|}{2}+\delta_1-\frac{3}{2}\delta_2.
    \end{align*}
    Analogously for $\{f<m-\varepsilon\}$. This means that if $\delta_1\geq \frac{3}{2}\delta_2$, then the median of $f$ in $C_1$ remains in a $\varepsilon$-margin compared to the median of $f$ in $B_1$.
    
    By using the quadratic structure of $\widetilde f$, we can apply Lemma~\ref{tauEstimate} combined with Proposition \ref{MinkowskiContent} to achieve that there are sets of size $\Omega(r)$ s.t.\ $0>\widetilde f(y)-\widetilde m_\mathbb{P}>-c r$ and analogously for the other direction.
    
    Hence, $\widetilde f$ stays in an $\mathcal{O}(r)$ margin in an area of size $\Omega(r)$ around the set of points which represent the median. Therefore, we can choose $\varepsilon\coloneqq c_1r, \delta_1\coloneqq c_2r, \delta_2\coloneqq c_3r^{2(d-1)}$. It would even suffice to use the weaker and easier bound $\delta_1=c_2r^d$.
    Thus, in our desired accuracy, we can go to the cylinder in direction $x_d$.
    
    Median computation: We are now able to compute the integral explicitly using a simple estimate
    \begin{align*}
        a_{ij}x_ix_j+x_d-rB&\leq \widetilde f\leq a_{ij}x_ix_j+x_d+rB,\\
        \mathds{1}_{\{a_{ij}x_ix_j+x_d-rB<\widetilde m_\mathbb{P}\}}&\geq \mathds{1}_{\{\widetilde f<\widetilde m_\mathbb{P}\}}\geq \mathds{1}_{\{a_{ij}x_ix_j+x_d+rB<\widetilde m_\mathbb{P}\}}.
    \end{align*}
    This allows to compute (writing $x'\coloneqq (x_1,\dots, x_{d-1})$)
    \begin{align*}
        \fint\limits_{A_1^{d-1}}\fint\limits_{-1}^1&\mathds{1}_{\{a_{ij}x_ix_j+x_d\pm rB<\widetilde m_\mathbb{P}\}}\dd x_d\dd x'-\frac{1}{2}\\
        &=\frac{1}{2}\left(-\fint\limits_{A_1^{d-1}}a_{ij}x_ix_j\dd x'+\widetilde m_\mathbb{P}\mp rB\right).
    \end{align*}
    Thus,
    \begin{align*}
         \widetilde m_\mathbb{P}&=\fint\limits_{B_1^{d-1}}a_{ij}x_ix_j\dd x'+\mathcal{O}(r)\\
        &=\frac{\omega_{d-2}}{\omega_{d-1}}\sum\limits_{i=1}^{d-1}a_{ii}\int\limits_{-1}^1s^2\sqrt{1-s^2}^{d-2}\dd s+\mathcal{O}(r)\\
        &=\frac{1}{c}\sum\limits_{i=1}^{d-1}2a_{ii}+\mathcal{O}(r),\\
        \frac{2}{c}&=\frac{\omega_{d-2}}{\omega_{d-1}}\int\limits_{-1}^1s^2\sqrt{1-s^2}^{d-2}\dd s\\
        &=\frac{\omega_{d-2}}{\omega_{d-1}}\int\limits_{-1}^1\left(1-\sqrt{1-s^2}^2\right)\sqrt{1-s^2}^{d-2}\dd s\\
        &=1-\frac{\omega_{d-2}\omega_{d+1}}{\omega_{d-1}\omega_{d}}=1-\frac{d}{d+1}=\frac{1}{d+1}.
    \end{align*}
    In the last step, we used that $\frac{\omega_d}{\omega_{d-2}}=\frac{2\pi}{d}$.
    We conclude that the median is $\widetilde m_\mathbb{P}=\frac{1}{2(d+1)}\sum\limits_{i=1}^{d-1}2a_{ii}+\mathcal{O}(r)$. 
    
    This calculation is justified as by the normalization of the constant $6C$, we have $a_{ij}x_ix_j+\widetilde m_\mathbb{P}+rB\leq 1$.

    The proof can be adapted to fit our scheme with annuli instead of balls:\\
    The proof goes through the same with $A_1\coloneqq B_1\setminus B_\kappa$ instead of $B_1$ until the computation of the constant.
    \begin{align*}
        \widetilde m_\mathbb{P}&= \mathcal{O}(r)+\fint_{A_1^{d-1}} a_{ij}x_ix_j\dd x\\
        \overset{Symm.}&{=}\mathcal{O}(r)+\fint_{A_1^{d-1}} a_{ii}x_i^2\dd x.
    \end{align*}
    The latter term can be computed separately:
    \begin{align*}
        \sum\limits_{i=1}^{d-1} a_{ii}\frac{1}{(1-\kappa^{d-1})\omega_{d-1}}\int\limits_{-1}^1x_d^2\int\limits_{\mathbb{R}^{d-1}}\mathds{1}_{\{(x')^2+x_d^2\in [\kappa^2,1]\}}\dd x'\dd x_d.
    \end{align*}
    The volume is $\begin{cases}
        \omega_{d-2}\left((1-x_d^2)^\frac{d-2}{2}-(\kappa^2-x_d^2)^\frac{d-2}{2}\right) &\text{for }x_d<\kappa,\\
        \omega_{d-2}(1-x_d^2)^\frac{d-2}{2} &\text{for }x_d\geq \kappa.
    \end{cases}$
    
    Thus, we can compute
    \begin{align*}
        \int\limits_{-1}^1\int\limits_{\mathbb{R}^{d-1}}&\mathds{1}_{\{|x'|^2+x_d^2\in [\kappa^2,1]\}}\dd x'\dd x_d\\
        &=\frac{2\omega_{d-2}}{(1-\kappa^{d-1})\omega_{d-1}}\left(\int\limits_0^1 (1-s^2)^\frac{d-2}{2}\dd s-\int\limits_0^\kappa (\kappa^2-s^2)^\frac{d-2}{2}\dd s\right).
    \end{align*}
    Observe
    \begin{align*}
        \int\limits_0^\kappa (\kappa^2-x^2)^\frac{d-2}{2}\dd x&=\kappa^{d-1}\int\limits_0^1 (1-x^2)^\frac{d-2}{2}\dd x,\\
        \int\limits_0^1 (1-x^2)^\frac{d-2}{2}\dd x&=\frac{\omega_{d-1}}{2\omega_{d-2}}.
    \end{align*}
    This yields
    \begin{align*}
        \widetilde m_\mathbb{P}&= \mathcal{O}(r)+\sum\limits_{i\neq d} a_{ii}\cdot \frac{2\omega_{d-2}}{(1-\kappa^{d-1})\omega_{d-1}}\\
        &\quad\cdot\left(\int\limits_0^\kappa x^2((1-x^2)^\frac{d-2}{2}-(\kappa^2-x^2)^\frac{d-2}{2})\dd x+\int\limits_\kappa^1x^2(1-x^2)^\frac{d-2}{2}\dd x\right)\\
        &= \mathcal{O}(r)+\sum\limits_{i\neq d} a_{ii}\cdot \frac{2\omega_{d-2}}{(1-\kappa^{d-1})\omega_{d-1}}\\
        &\quad\cdot\left(\int\limits_0^1 x^2(1-x^2)^\frac{d-2}{2}\dd x-\kappa^{d+1}\int\limits_0^1x^2(1-x^2)^\frac{d-2}{2}\dd x\right).
    \end{align*}
    Using $x^2=1-(1-x^2)$ and the above formula we continue
    \begin{align*}
        \widetilde m_\mathbb{P}&= \mathcal{O}(r)+\sum\limits_{i\neq d} a_{ii}\cdot \frac{2\omega_{d-2}}{(1-\kappa^{d-1})\omega_{d-1}}(1-\kappa^{d+1})\\
        &\hspace*{7.3em}\cdot\left(\int\limits_0^1 (1-x^2)^\frac{d-2}{2}\dd x-\int\limits_0^1 (1-x^2)^\frac{d}{2}\dd x\right)\\
        &=\mathcal{O}(r)+\sum\limits_{i\neq d} a_{ii}\cdot \frac{2\omega_{d-2}}{(1-\kappa^{d-1})\omega_{d-1}}(1-\kappa^{d+1})\left(\frac{\omega_{d-1}}{2\omega_{d-2}}-\frac{\omega_{d+1}}{2\omega_d}\right).
    \end{align*}
    Lastly, we plug in $\frac{\omega_d}{\omega_{d-2}}=\frac{2\pi}{d}$:
    \begin{align*}
        \widetilde m_\mathbb{P}&=\mathcal{O}(r)+\frac{1}{d+1}\sum\limits_{i\neq d} 2a_{ii}\cdot\frac{1-\kappa^{d+1}}{1-\kappa^{d-1}}.
    \end{align*}
    Note that this is consistent with our result for the ball.
    In the limit $\kappa\nearrow 1$ of shrinking spheres this becomes
    \begin{align*}
        \widetilde m_\mathbb{P}&= \mathcal{O}(r)+\frac{1}{d-1}\sum\limits_{i\neq d} 2a_{ii}.
    \end{align*}

    Here, the pointwise convergence suffices for \eqref{Consistency1} and \eqref{Consistency2} as for $|\nabla \varphi|>0$ the same holds by continuity in a local neighborhood. Additionally, $F(\nabla \varphi, \nabla^2 \varphi)$ is continuous and our local uniform approximation result only depends on $|\nabla \varphi|^{-1}$. This guarantees the convergence.

    \textbf{If $\nabla \varphi=0$}, we furthermore can reduce the setting to $\varphi(x)=c |x|^4$ with $c\geq 0$ by \cite{IPS}. We will follow their proof. In this case, we can assume $\nabla \varphi=0, \nabla^2 \varphi=0, x_0=0$ and $F_*(0,0)=F^*(0,0)=0$. With $\psi(x)\coloneqq\sqrt{1+|x|^2}$, we see that $\varphi(x)=c (\psi(x)^2-1)^2$. Thus, we can apply Lemma~\ref{Lemma:3.2} with a suitable $1>\varepsilon>0, C>0$ (the exact value will change from line to line without relabeling) we have for all $x\in B_\varepsilon(0), h\leq \varepsilon$:
    \begin{align*}
        G_h\varphi (x)&=c ((G_h\psi)^2-1)^2\\
        &\leq c ((\psi+Ch)^2-1)^2\\
        &=c (|x|^2+(2\sqrt{1+|x|^2}C+C^2h)h)^2\\
        &\leq c (|x|^2+Ch)^2\\
        &=c|x|^4+c(C^2h+2|x|^2C)h\\
        &\leq \varphi(x)+c C\varepsilon h.
    \end{align*}
    Therefore,
    \begin{align*}
        \ulim\limits_{h\to 0}\frac{G_h\varphi-\varphi}{h}(0)\coloneqq\lim\limits_{\varepsilon\to 0}\sup\left\{\frac{G_h\varphi (x)-\varphi(x)}{h}: h<\varepsilon, y\in B_\varepsilon(0)\right\}&\leq 0.
    \end{align*}
    The other inequality follows analogously.
\end{proof}

\begin{remark*}
    In the case of the ball as local neighborhood $A_r=B_r$, the pointwise convergence does not hold at points where $\nabla u=0$, even in the sense of the relaxed viscosity solution.
\end{remark*}

In the second part of this proof, we used Lemma~\ref{Lemma:3.2}. With the technique of the first part of the proof, we are now able to prove this.

\begin{proof}[Proof of Lemma~\ref{Lemma:3.2}]
    We consider $\psi(x)\coloneqq \sqrt{1+|x|^2}$. Now, the median can be reformulated as
    \begin{align*}
        \frac{1}{2}&=\int\limits_{A_r(x_0)}\mathds{1}_{\{\psi(x)<m_\mathbb{P}\}}\dd x.
    \end{align*}
    By symmetry, this is equivalent to (here, w.l.o.g.\ and by abuse of notation $x_0=x_0e_d$ with $x_0\geq 0$)
    \begin{align*}
        \frac{1}{2}&=\int\limits_{A_r(0)}\mathds{1}_{\{|x|^2+\frac{2x_0}{r}x_d<\widetilde m_\mathbb{P}(\sqrt{1+|x_0 e_d+rx_d|^2}+\sqrt{1+x_0^2})\}}\dd x.
    \end{align*}

    With $f(x)\coloneqq |x|^2+\frac{2x_0}{r}x_d$, we have
    $$\{x_d>c\frac{r}{2x_0}\}\subseteq \{f>c\}\subseteq \{x_d>(c-1)\frac{r}{2x_0}\}$$
    and
    $$\{x_d<(c-1)\frac{r}{2x_0}\}\subseteq \{f<c\}\subseteq \{x_d<c\frac{r}{2x_0}\}.$$

    In the same fashion as before, 
    \begin{align*}
        \mathbb{P}(\{f<0\})&\geq \mathbb{P}(\{x_d<-\frac{r}{2x_0}\})\\
        &=\mathbb{P}(\{x_d>\frac{r}{2x_0}\})\\
        &\geq \mathbb{P}(\{f>2\})\\
        &\geq \mathbb{P}(\{x_d>\frac{r}{x_0}\})\\
        &=\mathbb{P}(\{x_d<-\frac{r}{x_0}\})\\
        &\geq \mathbb{P}(\{f<-2\}).
    \end{align*}
    
    Using monotonicity and continuity, we conclude that there exists $-2\leq a\leq 0$ s.t.\ 
    $$\mathbb{P}(\{f>2\})=\mathbb{P}(\{f<a\}).$$
    Thus, the calculation of the median can be restricted to $a\leq f\leq 2$ and it is bounded from above by $2$. Note that it can happen that these probabilities are zero, e.g., when $x_0=0$ or in general when $x_0\ll 1$. In this case the restriction is not necessary since the bound is trivially satisfied.
    Now, we know that 
    $$\widetilde m_\mathbb{P}\leq \frac{2}{\sqrt{1+|x_0 e_d+rx_d|^2}+\sqrt{1+x_0^2}}\leq 1.$$

    For $-\psi(x)= -\sqrt{1+|x|^2}$, we know that if 
    $$\widetilde m_\mathbb{P}=\frac{G_h\psi-\psi}{h}\leq C$$
    then 
    $$\frac{G_h(-\psi)-(-\psi)}{h}=-\widetilde m_\mathbb{P}\geq -C.\qedhere$$
\end{proof}

We are now in a position to use Lemma~\ref{UniformConvergence}. By Proposition~\ref{Prop:Consistency}, Proposition~\ref{Prop:Monotonicity} and Proposition~\ref{Prop:TimeContinuity}, our scheme satisfies the assumptions of Lemma~\ref{UniformConvergence} which implies the uniform convergence to the viscosity solution of level set mean curvature flow.

\begin{remark}[Counterexample to \cite{Oberman}]
    In~\cite{Oberman}, Oberman claims for the sphere
    $$\frac{m_\mathbb{P}-u(x_0)}{r^2}-F(\nabla u, \nabla^2u)=\mathcal{O}(r^2).$$
    Such a global uniform statement does not hold but can be replaced by a pointwise weaker convergence as can be seen in this paper and in \cite{esedoglumedianfilter}. The mistake stems from a Taylor approximation that increases the order of approximation afterwards.
    
    The counterexample is $u(x)=x_1^2$ in $\mathbb{R}^2$.\\
    Here, $\nabla u=2x_1e_1$ and $\nabla^2 u=\begin{pmatrix}
        2&0\\0&0
    \end{pmatrix}$. 
    This implies for $F$ as in Definition~\ref{Def:F}
    $$F(\nabla u, \nabla^2u)=\begin{cases}
        0 & \text{ for }x_1\neq 0,\\ [0,2] & \text{ for }x_1=0.
    \end{cases}$$
    The median can be computed by solving:
    \begin{align*}
        \frac{1}{2}&=\fint\limits_{\partial B_r(x_0e_1)}\mathds{1}_{\{x_1^2<m_\mathbb{P}\}}\dd x\\
        &=\frac{1}{2\pi}\int_0^{2\pi}\mathds{1}_{\{\cos^2(\theta)+\frac{2x_0\cos(\theta)}{r}\leq\widetilde m_{\mathbb{P}}\}}\dd \theta.
    \end{align*}
    For $a=\frac{r}{10}$ this is yields that $\widetilde m_{\mathbb{P}}$ is a strictly positive constant, $\widetilde m_{\mathbb{P}}\approx 0{.}48$ which contradicts the claim that would be:
    $$\widetilde m_{\mathbb{P}}=\mathcal{O}(r^2).$$
\end{remark}

\subsection{Discrete Case}
\label{DiscreteCase}

In this section, we compare the continuous median $m_\mathbb{P}$ to the discrete $m_N$ and show that with high probability which goes to $1$ as $r\to 0$ both evolutions are uniformly close and therefore, they are also close to the viscosity solution as shown in Section~\ref{ContinuousCase}.

For this, we use the Dvoretzky-Kiefer-Wolfowitz inequality, see~\cite{DKW}. It states a convergence of the cumulative distribution functions (CDF) by approximations of finitely many samples.

\begin{lemma}[Dvoretzky-Kiefer-Wolfowitz inequality]
    \label{Lemma:DKW}
    Let $X_i$ be i.i.d.\ sampled random variables with CDF $F$. For $F_X(x)\coloneqq \mathbb{P}(X\leq x)$ and $F_N(x)\coloneqq \frac{1}{N}\sum\limits_{i=1}^N\mathds{1}_{\{X_i\leq x\}}$, we have convergence in probability:
    \begin{align*}
        \forall \varepsilon>0:  \mathbb{P}_N(\sup\limits_x|F_N(x)-F_X(x)|>\varepsilon)\leq 2e^{-2N\varepsilon^2}.
    \end{align*}
\end{lemma}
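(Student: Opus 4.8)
The plan is to follow the classical route: reduce to the uniform distribution, then to a one-sided estimate, and finally prove a sharp exponential tail bound for the one-sided uniform empirical process.

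First I would reduce to uniform samples. Let $F_X^{-1}(u)\coloneqq\inf\{x:F_X(x)\geq u\}$ and let $U_1,\dots,U_N$ be i.i.d.\ uniform on $[0,1]$. Then $F_X^{-1}(U_i)$ has CDF $F_X$, and the identity $F_X^{-1}(u)\leq x\iff u\leq F_X(x)$ gives, under this coupling, $F_N(x)=G_N(F_X(x))$ with $G_N(t)\coloneqq\frac1N\sum_{i=1}^N\mathds{1}_{\{U_i\leq t\}}$. Hence $\sup_x|F_N(x)-F_X(x)|=\sup_{t\in F_X(\mathbb{R})}|G_N(t)-t|\leq\sup_{t\in[0,1]}|G_N(t)-t|$, so it suffices to bound $\mathbb{P}_N(\sup_{t\in[0,1]}|G_N(t)-t|>\varepsilon)$.

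Second, I would decompose $\{\sup_t|G_N(t)-t|>\varepsilon\}$ into the union of $\{\sup_t(G_N(t)-t)>\varepsilon\}$ and $\{\sup_t(t-G_N(t))>\varepsilon\}$ and apply a union bound; the substitution $U_i\mapsto1-U_i$ shows the two events have equal probability. This yields the prefactor $2$ and reduces everything to the one-sided bound $\mathbb{P}_N(\sup_t(G_N(t)-t)>\varepsilon)\leq e^{-2N\varepsilon^2}$.

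Third --- the crux --- I would prove this one-sided estimate. Since $G_N$ is a step function, $\sup_t(G_N(t)-t)=\max_{1\leq k\leq N}(k/N-U_{(k)})$ for the order statistics $U_{(1)}\leq\cdots\leq U_{(N)}$, so the event in question is $\{\exists k:\ U_{(k)}<k/N-\varepsilon\}$. Following Massart, I would introduce the smallest such index as a stopping time, condition on the crossing location $U_{(k)}=s$, exploit the conditional distribution of the remaining samples (uniform on $[0,s]$), and control the resulting binomial tails by Chernoff estimates; a careful optimization of the exponent --- over both the index $k$ and the location $s$, using convexity of the Bernoulli relative-entropy rate function --- produces exactly $2N\varepsilon^2$. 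This optimization is the main obstacle: a crude union bound over $k$, or a reflection-type argument, only yields a larger absolute constant, and squeezing it down to the optimal $2$ is precisely the content of Massart's argument. If a non-sharp constant were acceptable, one could argue much more softly: the functional $(x_1,\dots,x_N)\mapsto\sup_x|F_N(x)-F_X(x)|$ has bounded differences $1/N$, so McDiarmid's inequality gives Gaussian concentration around the mean with exponent $2Nt^2$, and a symmetrization/VC estimate bounds the mean by $CN^{-1/2}$ --- but absorbing that mean to reach the clean shift-free form stated here is exactly what forces the finer combinatorial analysis.
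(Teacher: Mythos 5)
The paper does not prove this lemma; it simply cites it (as the Dvoretzky--Kiefer--Wolfowitz inequality with Massart's sharp constant, reference \cite{DKW}), so there is no proof in the paper to compare against. Your sketch follows the standard route to Massart's theorem and is organized correctly: the quantile coupling reduction to uniform samples is valid (the equivalence $F_X^{-1}(u)\leq x\iff u\leq F_X(x)$ for the generalized inverse is exactly what is needed), the one-sided reduction by symmetry and union bound is standard, and the reduction of the one-sided supremum to $\max_k(k/N-U_{(k)})$ over order statistics is correct because $G_N(t)-t$ decreases between jumps. You also correctly identify that the whole difficulty is concentrated in the final optimization over the crossing index and location --- this is precisely the content of Massart's 1990 paper, and a softer McDiarmid-plus-VC argument would only give a suboptimal constant and an additive mean term. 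What you have written is therefore an accurate roadmap rather than a proof: the Chernoff/relative-entropy optimization that produces the exact exponent $2N\varepsilon^2$ is named but not carried out, and that step is nontrivial. Since the paper invokes this as a textbook result, the appropriate move here (as the paper does) is to cite Dvoretzky--Kiefer--Wolfowitz together with Massart rather than reprove it; if you did want a self-contained proof, you would need to fill in that final calculation.
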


In this setting, we can define the medians.
\begin{align*}
    m_\mathbb{P}&\coloneqq \inf\left\{m:F_X(m)\geq \frac{1}{2}\right\},\quad F_X(m_\mathbb{P})=\frac{1}{2},\\
    m_N&\coloneqq \inf\left\{m:F_N(m)\geq \frac{1}{2}\right\},\quad F_N(m_N)=\begin{cases}\frac{1}{2}&N \text{ even},\\\frac{1}{2}+\frac{1}{2N}&N \text{ odd}.\end{cases}
\end{align*}
With $\tau_1\coloneqq \frac{1}{2}-F_X(m_\mathbb{P}-\varepsilon), \tau_2\coloneqq F_X(m_\mathbb{P}+\varepsilon)-\frac{1}{2}$ this yields:
\begin{Corollary}
   For the distance of the discrete and continuous median, we have the following estimate:
    \begin{align*}
        \mathbb{P}_N(|m_N-m_\mathbb{P}|>\varepsilon)\leq e^{-2N\tau_1^2}+2e^{2\tau_2}e^{-2N\tau_2^2}.
    \end{align*}
\end{Corollary}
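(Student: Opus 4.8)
The plan is to bound the event $\{|m_N - m_\mathbb{P}| > \varepsilon\}$ by the union of the two one-sided events $\{m_N \geq m_\mathbb{P} + \varepsilon\}$ and $\{m_N \leq m_\mathbb{P} - \varepsilon\}$, translate each into a statement about the empirical CDF $F_N$ evaluated at $m_\mathbb{P} \pm \varepsilon$, and then apply the Dvoretzky-Kiefer-Wolfowitz inequality (Lemma~\ref{Lemma:DKW}). The key observation is that $m_N$ is, up to the $\tfrac{1}{2N}$ correction for odd $N$, the empirical quantile at level $\tfrac12$, so $m_N \leq m_\mathbb{P} - \varepsilon$ forces $F_N(m_\mathbb{P} - \varepsilon) \geq \tfrac12$, while by definition of $\tau_1$ we have $F_X(m_\mathbb{P} - \varepsilon) = \tfrac12 - \tau_1$; hence this event implies $F_N(m_\mathbb{P} - \varepsilon) - F_X(m_\mathbb{P} - \varepsilon) \geq \tau_1$, to which DKW (in its one-sided form, cost $e^{-2N\tau_1^2}$) applies. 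Symmetrically, $m_N \geq m_\mathbb{P} + \varepsilon$ forces $F_N(m_\mathbb{P} + \varepsilon) < \tfrac12 + \tfrac{1}{2N}$ (for odd $N$; for even $N$ one gets $F_N(m_\mathbb{P}+\varepsilon) < \tfrac12$), whereas $F_X(m_\mathbb{P} + \varepsilon) = \tfrac12 + \tau_2$, so $F_X - F_N \geq \tau_2 - \tfrac{1}{2N}$ on this event.

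First I would make the one-sided DKW bounds explicit: Lemma~\ref{Lemma:DKW} as stated controls the two-sided sup, but the standard DKW (Massart's form) also gives the one-sided bound $\mathbb{P}_N(\sup_x (F_N(x) - F_X(x)) > \varepsilon) \leq e^{-2N\varepsilon^2}$, which is what produces the single (not doubled) exponential $e^{-2N\tau_1^2}$ in the first term. Then I would handle the lower deviation $\{m_N \leq m_\mathbb{P}-\varepsilon\}$ directly with this one-sided bound at threshold $\tau_1$. For the upper deviation I would absorb the $\tfrac{1}{2N}$ shift: writing $\tau_2 - \tfrac{1}{2N}$ in the exponent and expanding $e^{-2N(\tau_2 - 1/(2N))^2} = e^{-2N\tau_2^2} e^{2\tau_2} e^{-1/(2N)} \leq e^{2\tau_2} e^{-2N\tau_2^2}$, which accounts precisely for the factor $e^{2\tau_2}$ in the statement. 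The leftover factor of $2$ in front of $e^{2\tau_2}e^{-2N\tau_2^2}$ I would get either by using the two-sided bound on that side, or by a slightly lossy union over the odd/even cases — in any case it is a harmless constant.

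The main (really the only) obstacle is bookkeeping around the discreteness of $F_N$: one must be careful that $m_N$, defined as $\inf\{m : F_N(m) \geq \tfrac12\}$, satisfies the correct strict/non-strict inequalities when compared against the continuous threshold $m_\mathbb{P} \pm \varepsilon$, and that the $\tfrac{1}{2N}$ overshoot for odd $N$ is charged to the right side of the estimate. A secondary subtlety is the implicit assumption $\tau_1, \tau_2 > 0$, i.e., that $F_X$ is strictly increasing through $\tfrac12$ near $m_\mathbb{P}$ (equivalently, no fattening / the median level set is negligible, as already invoked via Sard's theorem earlier); if $\tau_i = 0$ the corresponding bound is vacuous and the estimate holds trivially. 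Once these points are pinned down, the proof is three lines per side plus the elementary inequality $e^{-1/(2N)} \leq 1$.
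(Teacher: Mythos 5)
Your proposal is correct and follows essentially the same route as the paper: both split into the two one-sided tails, use monotonicity of $F_N$ together with $F_N(m_N)=\tfrac12$ or $\tfrac12+\tfrac{1}{2N}$ depending on the parity of $N$ to convert each tail into a deviation of $F_N$ from $F_X$ at $m_\mathbb{P}\mp\varepsilon$, and then apply DKW --- one-sided for the $e^{-2N\tau_1^2}$ term, two-sided for the other --- with the $\tfrac{1}{2N}$ shift absorbed into the factor $e^{2\tau_2}$ exactly as you expand. The only minor slip is the strict inequality $F_N(m_\mathbb{P}+\varepsilon)<\tfrac12+\tfrac{1}{2N}$, which should be $\leq$ (monotonicity gives only $F_N(m_\mathbb{P}+\varepsilon)\leq F_N(m_N)$ when $m_N=m_\mathbb{P}+\varepsilon$); this does not affect the bound.
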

\begin{proof}
    Fix $\varepsilon>0$. We want to estimate $\mathbb{P}_N(|m_N-m_\mathbb{P}|>\varepsilon)$.

    \begin{align*}
        \mathbb{P}_N(|m_N-m_\mathbb{P}|>\varepsilon)&\leq \mathbb{P}_N(|m_N-m_\mathbb{P}|\geq \varepsilon)\\
        &=\mathbb{P}_N(m_N\leq m_\mathbb{P}-\varepsilon)+\mathbb{P}(m_N\geq m_\mathbb{P}+\varepsilon)\\
        &=:\text{I}+\text{II}.
    \end{align*}
    Now, both parts can be estimated using the DKW Theorem (Lemma~\ref{Lemma:DKW}) and monotonicity ($\chi$ is $1$ iff $N$ is odd):
    \begin{align*}
        \text{I}&\leq \mathbb{P}_N(F_N(m_N)\leq F_N(m_\mathbb{P}-\varepsilon))\\
        &=\mathbb{P}_N\left(\frac{1}{2}+\frac{1}{2N}\chi-F_X(m_\mathbb{P}-\varepsilon)\leq F_N(m_\mathbb{P}-\varepsilon)-F_X(m_\mathbb{P}-\varepsilon)\right)\\
        &\leq \mathbb{P}_N(\tau_1\leq \sup F_N-F_X)\\
        &\leq e^{-2N\tau_1^2}.
    \end{align*}
    Analogously, 
    \begin{align*}
        \text{II}&\leq \mathbb{P}_N(F_N(m_\mathbb{P}+\varepsilon)\leq F_N(m_N))\\
        &=\mathbb{P}_N(-F_N(m_\mathbb{P}+\varepsilon)\geq -F_N(m_N))\\
        &=\mathbb{P}_N(F_X(m_\mathbb{P}+\varepsilon)-F_N(m_\mathbb{P}+\varepsilon)\geq F_X(m_\mathbb{P}+\varepsilon)-F_N(m_N))\\
        &\leq \mathbb{P}_N\left(\sup |F_X-F_N|\geq \tau_2-\frac{1}{2N}\chi\right)\\
        &\leq 2e^{-2N(\tau_2-\frac{1}{2N}\chi)^2}\leq 2e^{2\tau_2}e^{-2N\tau_2^2}.\qedhere
    \end{align*}
\end{proof}

Using this corollary, we see that it suffices to bound $\tau_{1/2}$ from below and sum up the probabilities over time and space.
This estimate is done using the modulus of continuity via isoperimetric arguments.

\begin{lemma}
\label{tauEstimate}
Let $g\in \BUC(D)$ with $\omega_g$ the modulus of continuity of $g$. Then,
$\tau_{1/2}\gtrsim \frac{\omega_g^{-1}(\varepsilon)}{r}$.
\end{lemma}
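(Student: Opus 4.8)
The plan is to exhibit, inside the neighborhood $A\coloneqq A_r(x)$, a transition layer around the median level set of thickness $\delta\coloneqq\omega_g^{-1}(\varepsilon)$, to estimate its volume from below by an isoperimetric argument, and then to divide by $|A|\asymp r^d$. Write $u$ for the iterate of the scheme whose distribution over $A_r(x)$ defines $F_X$; since one step of the scheme does not enlarge the modulus of continuity — a consequence of monotonicity and of invariance under spatial translations, as used in the proof of Proposition~\ref{Prop:Consistency} — an induction on the step number shows $|u(y)-u(z)|\le\omega_g(|y-z|)$ for all $y,z\in D$. Only the regime $\delta\lesssim r$ carries content, which we henceforth assume. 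From $F_X(m)=\fint_A\mathds 1_{\{u\le m\}}\dd y$ and the fact that $m_\mathbb{P}$ is the \emph{infimum} of the medians, so that $F_X(m_\mathbb{P}^-)\le\tfrac12\le F_X(m_\mathbb{P})$, one reads off
\[
\tau_1\ \ge\ \frac{1}{|A|}\,\bigl|\{m_\mathbb{P}-\varepsilon<u<m_\mathbb{P}\}\cap A\bigr|,
\qquad
\tau_2\ \ge\ \frac{1}{|A|}\,\bigl|\{m_\mathbb{P}<u<m_\mathbb{P}+\varepsilon\}\cap A\bigr|,
\]
so it is enough to bound the volumes of these two gap sets from below by a constant times $\delta\,r^{d-1}$. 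I will do this for $\tau_1$; the argument for $\tau_2$ is symmetric upon replacing the sets $E,F$ below by $\{u\le m_\mathbb{P}\}\cap A$ and $\{u\ge m_\mathbb{P}+\varepsilon\}\cap A$.

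Put $E\coloneqq\{u\le m_\mathbb{P}-\varepsilon\}\cap A$ and $F\coloneqq\{u\ge m_\mathbb{P}\}\cap A$. By the defining property of the median $|F|\ge\tfrac12|A|$, while $|E|=F_X(m_\mathbb{P}-\varepsilon)\,|A|=\bigl(\tfrac12-\tau_1\bigr)|A|$; hence either $\tau_1>\tfrac14$, in which case there is nothing to prove because $\delta\lesssim r$, or $|E|\ge\tfrac14|A|$. The modulus of continuity yields a quantitative separation of $E$ and $F$: if $y\in E$ and $z\in F$ then $u(z)-u(y)\ge\varepsilon$, hence $\omega_g(|y-z|)\ge\varepsilon$, and therefore $|y-z|\ge\omega_g^{-1}(\varepsilon)=\delta$; that is, $\dist(E,F)\ge\delta$. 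Since the first gap set above equals $A\setminus(E\cup F)$, the task reduces to the purely geometric statement: if $E,F\subseteq A$ are measurable with $|E|,|F|\ge\tfrac14|A|$ and $\dist(E,F)\ge\delta$, then $|A\setminus(E\cup F)|\gtrsim\delta\,r^{d-1}$.

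For this, let $v(y)\coloneqq\dist(y,E)$ on $A$; it is $1$-Lipschitz with $|\nabla v|=1$ a.e., vanishes on $E$, and is $\ge\delta$ on $F$. For every level $t\in(0,\delta)$ the set $\{v\le t\}\cap A$ contains $E$ and avoids $F$, so both it and its complement in $A$ have volume at least $\tfrac14|A|$; the relative isoperimetric inequality on $A$ then gives $\Per(\{v\le t\};A)\gtrsim r^{d-1}$. For $A=B_r$ this is the scale-invariant relative isoperimetric inequality in a ball; for the annuli $A_{\kappa,r}$ the same bound holds with an implied constant depending only on $d$, because on $A_{\kappa,r}$ the factor $(1-\kappa)^{(d-1)/d}$ coming from $|A_{\kappa,r}|^{(d-1)/d}$ is exactly compensated by the $(1-\kappa)^{-1/d}$ growth of the relative isoperimetric constant as $\kappa\to1$, so the net power of $1-\kappa$ is zero. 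Integrating in $t$ by the coarea formula,
\[
\bigl|A\setminus(E\cup F)\bigr|\ \ge\ \bigl|\{0<v<\delta\}\cap A\bigr|\ =\ \int_0^\delta\Per(\{v\le t\};A)\dd t\ \gtrsim\ \delta\,r^{d-1}.
\]
Dividing by $|A|\asymp r^d$ gives $\tau_1\gtrsim\delta/r=\omega_g^{-1}(\varepsilon)/r$, and the same computation bounds $\tau_2$.

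The step I expect to be the main obstacle is keeping the implied constant uniform over all admissible stencils — above all over the shrinking annuli $A_{\kappa(r),r}$ with $\kappa(r)\to1$, where $A$ collapses onto a sphere. One must establish the relative isoperimetric inequality on a thin annulus with explicit $\kappa$-dependence (or transfer the coarea estimate to the limiting sphere and use a Poincaré inequality there) and verify the cancellation of the powers of $1-\kappa$ indicated above. A minor additional subtlety is fattening of $u$, i.e.\ an atom of $F_X$ at $m_\mathbb{P}$; it is harmless precisely because $m_\mathbb{P}$ is defined as an infimum, which is what produces $F_X(m_\mathbb{P}^-)\le\tfrac12\le F_X(m_\mathbb{P})$ and hence the two displayed lower bounds for $\tau_1,\tau_2$.
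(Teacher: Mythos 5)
Your route is a genuine alternative to the paper's: rather than bounding the outer Minkowski content of the super-level set $E$ and then non-localizing via Ledoux's Proposition~2.1 (as in the proofs of Proposition~\ref{MinkowskiContent} and Lemma~\ref{tauEstimate}), you apply the coarea formula directly to the distance function $v(y)=\dist(y,E)$ and read off the volume of the transition layer from the perimeters of its level sets. The reductions you carry out beforehand — preservation of the modulus of continuity, the identification of $\tau_{1/2}$ as the normalized volume of a gap set, and the quantitative separation $\dist(E,F)\geq\delta=\omega_g^{-1}(\varepsilon)$ — match the paper's proof exactly. Both arguments ultimately hinge on the same analytic input: a relative isoperimetric inequality on the stencil that is uniform over the whole family of annuli.

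It is precisely on that input that there is a gap. Your claim that $\Per(\{v\leq t\};A)\gtrsim r^{d-1}$ uniformly in $\kappa$, and the accompanying scaling argument that ``the factor $(1-\kappa)^{(d-1)/d}$ from $|A|^{(d-1)/d}$ is exactly compensated by the $(1-\kappa)^{-1/d}$ growth of the relative isoperimetric constant, so the net power of $1-\kappa$ is zero,'' is wrong. The exponents you combine give $(1-\kappa)^{(d-1)/d}\cdot(1-\kappa)^{1/d}=(1-\kappa)$, not $(1-\kappa)^0$; the optimal halving set of the thin annulus has relative perimeter $\asymp(1-\kappa)r^{d-1}$, not $\asymp r^{d-1}$. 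This is exactly what the paper's Remark after Proposition~\ref{Prop:relIsopAnnuli} points out: the usual $\frac{n-1}{n}$-type power law fails for shrinking annuli because perimeter and volume both vanish linearly in $1-\kappa$, and what one must prove instead is the \emph{linear} inequality $\Per(E;A_{\kappa,1})\gtrsim|E|$ with a constant uniform in $\kappa$ — this is Proposition~\ref{Prop:relIsopAnnuli}, established by cap symmetrization. Your final answer nonetheless comes out right, but only because a second error cancels the first: at the very end you divide by ``$|A|\asymp r^d$,'' whereas for shrinking annuli $|A_{\kappa(r),r}|\asymp(1-\kappa(r))r^d$. If you repair the coarea step by invoking the linear isoperimetric inequality $\Per(\{v\leq t\};A_1)\gtrsim|A_1|$ on the unit scale (so that $\Per(\{v\leq t\};A)\gtrsim|A|/r$ on scale $r$), the two occurrences of $1-\kappa$ cancel honestly and your proof goes through as a clean, coarea-based alternative to the paper's Ledoux-based argument.
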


For the proof of this lemma, we will view $\tau$ as the mass of a difference of sets and bound it by a relative outer Minkowski content. Hence, we will need to bound this content.

\begin{prop}[Minkowski content]
\label{MinkowskiContent}
    For the normalized local neighborhood $A_1$ (a ball, a fixed annulus or a series of shrinking annuli) and $E\subseteq A_1$ with $|E|=\frac{1}{2}|A_1|$ as well as for $\delta_0>\delta>0$, we have
    $$\frac{|E_\delta\setminus E|}{|A_1|}\gtrsim \delta.$$
\end{prop}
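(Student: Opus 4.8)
The plan is to reduce the statement to a classical relative isoperimetric inequality on the convex (or nearly convex) domain $A_1$. First I would recall that for a bounded domain $\Omega$ with Lipschitz boundary there is a relative isoperimetric constant $c_\Omega>0$ such that every measurable $E\subseteq\Omega$ satisfies $\Per(E;\Omega)\geq c_\Omega\min\{|E|,|\Omega\setminus E|\}^{(d-1)/d}$; applied with $|E|=\tfrac12|A_1|$ this gives a uniform lower bound $\Per(E;A_1)\geq c_1>0$ depending only on $d$ and on $\kappa$ (hence, in the shrinking case, on a uniform lower bound $\kappa\geq\kappa_0$, which we may assume since eventually $\kappa$ is bounded away from $0$). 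The point of having $|E|$ pinned at half the volume is precisely that the min on the right is bounded below, so the perimeter cannot degenerate.

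Next I would pass from perimeter to Minkowski content. For $\delta$ small, $E_\delta\setminus E$ is the one-sided tubular neighborhood of $\partial^* E$ inside $A_1$, and the outer Minkowski content controls it from below in the sense that
\begin{align*}
    \liminf_{\delta\to 0}\frac{|E_\delta\setminus E|}{\delta}\geq \mathcal{H}^{d-1}(\partial^* E\cap A_1)=\Per(E;A_1).
\end{align*}
Combined with the previous step this yields $|E_\delta\setminus E|\gtrsim \delta$ for all $\delta<\delta_0$ with $\delta_0$ depending only on $A_1$; dividing by the fixed quantity $|A_1|$ (bounded above and below uniformly, again using $\kappa\geq\kappa_0$ in the shrinking case) gives the claim. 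One has to be slightly careful that the implied constant is uniform along the sequence of shrinking annuli $A_{\kappa(r),1}$; here I would note that after rescaling the annulus to unit outer radius, the domains converge (in a suitable sense) to the unit ball, and the relative isoperimetric constants are stable under this convergence, so a single constant works for all $r$ small.

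The main obstacle is the uniformity of the relative isoperimetric constant over the family of shrinking annuli, since a thin spherical shell is a domain that is ``almost disconnected'' in the radial direction and one might fear the constant degenerates. The resolution is that $A_{\kappa,1}$ is still connected and, more importantly, that a set $E$ with $|E|=\tfrac12|A_1|$ cannot be a small cap: by the volume constraint any competitor must separate a macroscopic portion of the shell, and the relevant separating surface has $\mathcal{H}^{d-1}$-measure bounded below uniformly as $\kappa\to 1$ because the shell, though thin, wraps around the whole sphere. Making this quantitative — e.g.\ by a direct slicing argument in spherical coordinates rather than invoking a black-box isoperimetric inequality — is the step I expect to require the most care, and it is likely how the authors proceed given that they only need the crude linear-in-$\delta$ bound rather than a sharp constant.
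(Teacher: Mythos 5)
There is a genuine gap in the shrinking-annulus case, and it is precisely the one you flag at the end but then dismiss incorrectly. The assertion that ``the relevant separating surface has $\mathcal{H}^{d-1}$-measure bounded below uniformly as $\kappa\to 1$'' is false: the equatorial cut $E=A_{\kappa,1}\cap\{x_1>0\}$ has relative perimeter $\Per(E;A_{\kappa,1})=\omega_{d-1}(1-\kappa^{d-1})\to 0$ as $\kappa\nearrow 1$, even though $|E|=\frac{1}{2}|A_{\kappa,1}|$. (The paper records this explicitly in the remark following Proposition~\ref{Prop:relIsopAnnuli}.) Consequently the isoperimetric constant $c_\Omega$ in your inequality $\Per(E;\Omega)\geq c_\Omega\min\{|E|,|\Omega\setminus E|\}^{(d-1)/d}$ degenerates as $\kappa\to 1$ — one can check that the ratio of perimeter to $|E|^{(d-1)/d}$ scales like $(1-\kappa)^{1/d}$ — so there is no uniform lower bound $\Per(E;A_1)\geq c_1>0$. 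What \emph{is} true, and what the conclusion $\frac{|E_\delta\setminus E|}{|A_1|}\gtrsim\delta$ actually requires, is that the normalized quantity $\Per(E;A_1)/|A_1|$ is bounded below. The paper obtains this from a \emph{linear} isoperimetric inequality $\Per(E;A_{\kappa,1})\geq c|E|$ with $c$ independent of $\kappa$ (Proposition~\ref{Prop:relIsopAnnuli}), which is a non-standard statement requiring a dedicated argument (spherical cap symmetrization, then a direct one-dimensional computation); it cannot be extracted from the off-the-shelf isoperimetric inequality you invoke, because that inequality has the wrong exponent to survive the degeneration.

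There is also a secondary gap in the passage from the $\liminf$ bound $\liminf_{\delta\to0}|E_\delta\setminus E|/\delta\geq\Per(E;A_1)$ to the finite-$\delta$ estimate $|E_\delta\setminus E|\gtrsim\delta$ for $\delta<\delta_0$: the $\liminf$ statement only controls the limit, and upgrading it uniformly requires either a coarea argument with a lower bound on $\Per(E_s;A_1)$ for all $0<s<\delta$, or — as the paper does — Ledoux's concentration technique (\cite[Proposition~2.1]{ledoux2001concentration}), where one plugs the isoperimetric profile into a differential inequality for $\delta\mapsto |E_\delta|$. For the ball and fixed annuli one uses $v(t)=ct^d$ there; for the shrinking annuli the linear isoperimetric inequality forces the exponential ansatz $v(t)=e^{ct}$, which is precisely why the paper splits the proof into two parts. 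Your proposal matches the paper's approach for the ball and fixed annuli, but the shrinking-annulus case needs a genuinely different isoperimetric inequality and a different $v$ in the Ledoux step, not a stability-of-constants argument.
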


\begin{proof}[Proof Proposition~\ref{MinkowskiContent} --- Part 1]
    In this part, we will prove the proposition for the ball and fixed annulus. The case shrinking annuli will be done in the second part.
    The procedure is as follows: First, we will use the relative isoperimetric inequality to bound the relative perimeter in terms of the mass of the set $E$. This allows us to obtain a lower bound on the outer Minkowski content.

    \cite[Theorem~3.1]{chambolle2014remark} ensures that if $E$ is not of finite perimeter in $A$ then the outer Minkowski content is infinite.

    For a fixed annulus $A_{\kappa, 1}$ or the ball one can use the relative isoperimetric inequality proved in \cite{Mazya}, since the annulus is the union of two parts that have quasi-isometric transformations to the ball. Here, we are only interested in $\delta$ which is small (but fixed) compared to the geometry of $A_1$ and we thus can restrict ourselves to the setting of $|E|, |E_\delta|\in \left(\frac{|A_1|}{10}, \frac{9|A_1|}{10}\right)$. For shrinking annuli, one has to prove the relative isoperimetric inequality per hand, see below. With an approximation argument, one obtains this inequality for any set of finite perimeter and thus for measurable sets. 
    Using the coarea formula with the first part of \cite[Proof of Theorem 5]{ambrosio2008outer}, we obtain a lower bound on the outer Minkowski content by the perimeter
    $$\limsup\limits_{\delta\to 0}\frac{|E_\delta\setminus E|}{\delta}\geq \Per(E;A_1).$$ Together with the isoperimetric inequality
    $$\Per(E;A_1)\geq c(A_1)|E|^\frac{d-1}{d},$$
    this can be used to non-localize the outer Minkowski content using the technique of Ledoux:
    \begin{lemma}[{\cite[Proposition~2.1]{ledoux2001concentration}}]
        Let $\mu$ be a.c.\ w.r.t.\ the Lebesgue measure in $\mathbb{R}^d$ and $I_\mu$ the isoperimetric function (the maximum lower bound of the outer Minkowski content by the measure of a set). Assume $I_\mu\geq v'\circ v^{-1}$ for a strictly increasing differentiable $v:I\to [0,\mu(X)]$ with an interval $I$. Then for every $r>0$
        $$v^{-1}(\mu(E_r))\geq v^{-1}(\mu(E))+r.$$
    \end{lemma}
    Using this, we get our desired result $v^{-1}(|E_\delta|)\geq v^{-1}(|E|)+\delta$ for $v(r)=c\cdot r^{d}$. By the Bernoulli inequality
    $$\frac{|E_\delta\setminus E|}{|A_1|}\gtrsim\delta.$$
    For the shrinking annuli, this proof does not work because the constants and volumes in the above proof either explode or vanish.
\end{proof}

\begin{remark}
    Without the normalization, the condition becomes $\delta_0\cdot r>\delta$. As we will see in the proof of Lemma~\ref{tauEstimate}, the best we can hope for and will use is $\delta\in \Omega(r)$ for an initial datum which is Lipschitz. For general $g$, we will have a smaller $\delta$. Thus, this condition can always be satisfied.
\end{remark}

We used the relative isoperimetric inequality for a fixed neighborhood $A_1$. Next, we show that a similar inequality holds for shrinking annuli.

\begin{prop}[Relative isoperimetric inequality in shrinking annuli]
\label{Prop:relIsopAnnuli}
    Let $A_{\kappa, 1}\coloneqq B_1\setminus B_\kappa$ be an annulus and $E\subseteq A_{\kappa, 1}$ a measurable, relatively open set with $\frac{|E|}{|A_{\kappa,1}|}\in \left(\frac{1}{10},\frac{9}{10}\right)$. Then, there exists a constant $c>0$ s.t.\ for all $1>\kappa>\kappa_0$, we have
    $$\Per(E;A_{\kappa,1})\geq c |E|.$$
\end{prop}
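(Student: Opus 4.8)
The plan is to reduce the relative isoperimetric inequality in the thin annulus $A_{\kappa,1}$ to the classical relative isoperimetric inequality on a fixed reference domain by an explicit bi-Lipschitz change of variables, keeping track of how the Lipschitz constants degenerate as $\kappa \nearrow 1$, and then to observe that this degeneration is exactly compensated by the fact that both $|E|$ and $|A_{\kappa,1}|$ are of the same (vanishing) order, so that the normalized inequality survives the limit. Concretely, I would use (spherical) coordinates $(\rho,\theta)\in[\kappa,1]\times\mathbb{S}^{d-1}$ and the map $\Phi_\kappa(s,\theta) = ((1-\kappa)s+\kappa)\,\theta$ sending the product cylinder $C\coloneqq[0,1]\times\mathbb{S}^{d-1}$ onto $A_{\kappa,1}$. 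In the radial direction $\Phi_\kappa$ stretches by the factor $1-\kappa$, which is the dangerous direction; in the angular directions it acts essentially as a rotation of a sphere of radius $\approx 1$, so those derivatives are comparable to constants uniformly in $\kappa\in(\kappa_0,1)$.

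The key steps, in order: (1) Pull back the set $E$ under $\Phi_\kappa$ to a set $\widetilde E\subseteq C$ with $|\widetilde E|/|C|\in(1/10,9/10)$, using that the Jacobian of $\Phi_\kappa$ is $(1-\kappa)\,((1-\kappa)s+\kappa)^{d-1}$, whose oscillation over $s\in[0,1]$ is bounded by a $\kappa$-independent constant once $\kappa>\kappa_0$ (the factor $1-\kappa$ cancels in the volume ratio). (2) Apply the classical relative isoperimetric inequality on the product domain $C=[0,1]\times\mathbb{S}^{d-1}$ (a fixed smooth compact manifold with boundary), giving $\mathrm{Per}(\widetilde E; C)\gtrsim |\widetilde E|^{(d-1)/d} \gtrsim |\widetilde E|$, where in the last step I use that $|\widetilde E|$ is bounded away from $0$ and from $|C|$. (3) Transfer the perimeter back: since the tangential derivatives of $\Phi_\kappa$ are bounded below by a $\kappa$-independent constant, the $(d-1)$-dimensional measure of $\Phi_\kappa(\partial^*\widetilde E)$ controls (up to a uniform constant) $\mathrm{Per}(\widetilde E;C)$ — here one only needs a \emph{lower} bound on how $\Phi_\kappa$ distorts $(d-1)$-area, and the worst case is a surface whose normal is radial, where the distortion factor is $\approx\prod(\text{angular stretches})\gtrsim 1$, not the radial factor $1-\kappa$. (4) Finally divide: $\mathrm{Per}(E;A_{\kappa,1}) \gtrsim |\widetilde E| \gtrsim |E|/(1-\kappa) \geq |E|$ once $1-\kappa<1$; more carefully, $|E| = \int_{\widetilde E}\text{Jac}\,\Phi_\kappa \le (1-\kappa)\cdot C\cdot|\widetilde E|$, so $|\widetilde E|\ge |E|/((1-\kappa)C) \ge |E|/C$, and combined with steps (2)–(3) this yields $\mathrm{Per}(E;A_{\kappa,1})\ge c|E|$ with $c$ independent of $\kappa\in(\kappa_0,1)$.

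The main obstacle is step (3): making precise that it is only the \emph{tangential} stretching of $\Phi_\kappa$ (which stays bounded below) that matters for lower-bounding the pushed-forward surface area, whereas a naive estimate would use the smallest singular value of $D\Phi_\kappa$, namely the radial factor $1-\kappa\to 0$, and would lose everything. The clean way to handle this is to decompose the reduced boundary $\partial^*\widetilde E$ according to its normal direction: on the portion where the normal has a nonvanishing tangential component the tangential stretching of $\Phi_\kappa$ gives the bound directly, while on the portion where the normal is (nearly) radial, the surface is (nearly) a graph over $\mathbb{S}^{d-1}$-slices and its image under $\Phi_\kappa$ has area comparable to the surface area of a sphere of radius $\approx 1$, again with a $\kappa$-independent constant. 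Alternatively — and this is probably the slickest route — one replaces the geometric bookkeeping by a direct one-dimensional slicing argument: slice $A_{\kappa,1}$ by rays from the origin and by the spheres $\{|x|=\rho\}$, apply a one-dimensional and a spherical isoperimetric/Poincaré inequality respectively, and combine them to get $\mathrm{Per}(E;A_{\kappa,1})\gtrsim \min\{|E|,|A_{\kappa,1}\setminus E|\}$ with constants that are uniform in $\kappa$ precisely because the annulus width $1-\kappa$ cancels between the two types of slices. I would present step (3) via this slicing argument, as it avoids any subtle discussion of singular values.
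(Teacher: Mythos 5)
Your route is genuinely different from the paper's. The paper proves the relative isoperimetric inequality in $A_{\kappa,1}$ directly by spherical cap symmetrization, reducing to a scalar profile $f(\rho)$ describing the angular radius of the cap on each sphere $\partial B_\rho$, and then does an explicit case analysis on $f$; you instead transport the problem to the fixed cylinder $C=[0,1]\times\mathbb{S}^{d-1}$ by a bi-Lipschitz change of variables $\Phi_\kappa$. That is a legitimate and arguably more conceptual strategy. However, step (3) as you have stated it contains a concrete error, and your classification of which normal direction is dangerous is exactly reversed. The $(d-1)$-area distortion of $\Phi_\kappa$ on a hyperplane with unit normal $\nu=(\nu_s,\nu_\theta)$ equals $|\det D\Phi_\kappa|\cdot|(D\Phi_\kappa)^{-T}\nu|$; with $D\Phi_\kappa\approx\mathrm{diag}(1-\kappa,\rho,\ldots,\rho)$ this is $\rho^{d-1}$ when $\nu=e_s$ (the ``radial'' case, which is therefore the \emph{good} one, uniformly $\gtrsim 1$), but it degenerates to $(1-\kappa)\rho^{d-2}$ when $\nu$ is purely angular: a piece of $\partial^*\widetilde E$ containing the $s$-direction is squeezed along the annulus width $1-\kappa$. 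So the minimum of the $(d-1)$-distortion over normal directions is of order $1-\kappa$, not $\gtrsim 1$, and your claimed inequality $\Per(E;A_{\kappa,1})\gtrsim\Per(\widetilde E;C)$ with a $\kappa$-uniform constant is simply false.

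The reason your conclusion nonetheless comes out right is that you are simultaneously too generous in step (4): you derive $|\widetilde E|\ge |E|/((1-\kappa)C)$ and then throw away the factor $1/(1-\kappa)$. Keeping that factor and replacing the incorrect perimeter transfer by the correct one, $\Per(E;A_{\kappa,1})\gtrsim(1-\kappa)\Per(\widetilde E;C)\gtrsim(1-\kappa)\min\{|\widetilde E|,|C\setminus\widetilde E|\}\gtrsim(1-\kappa)|\widetilde E|$, the two powers of $1-\kappa$ cancel and you land on $\Per(E;A_{\kappa,1})\gtrsim|E|$ with a constant depending only on $\kappa_0$ and the isoperimetric constant of $C$. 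So the proposal is salvageable once the bookkeeping is repaired. The slicing alternative you mention (rays plus spheres) would indeed avoid the singular-value discussion, but you have not actually combined the two slicing families — that requires distinguishing, say, whether the mass of $E$ is mostly carried by rays that are more than half full — so as written it is an idea rather than a proof. The paper's symmetrization argument bypasses all of this by reducing to a one-dimensional profile at the outset.
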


\begin{proof}
    In the following, we assume $|E|\leq \frac{1}{2}|A_{\kappa,1}|$ by taking $A_{\kappa,1}\setminus E$ instead of $E$ if necessary. For the result, this only changes the constant by a factor, since the restrictions on the relative mass of $E$ allow one to take a larger constant to get from $A_{\kappa,1}\setminus E$ to $E$, since their volumes are comparable.

    Using \cite[Chapter 7]{baernstein2019symmetrization}, we can use the (spherical) cap symmetrization w.r.t.\ $e_1$ which reduces the perimeter while maintaining the volume. Thus, for the isoperimetric inequality, we can simplify our problem to sets $E$ that are radially symmetric. Moreover, we can write $E$ depending on a one-dimensional function $f:[\kappa, 1]\to [0,2\pi]$ s.t.\ $E\cap \partial B_r$ is a spherical cap with center $re_1$ and (spherical) radius $f(r)$. Here, we take the convention that $f(r)=0$ means that this intersection is empty, since we are interested in minimizers. Next, using the parameterization of the unit sphere of \cite{aberra2007surfaces}, we parameterize this boundary via
    \begin{align*}
        p:[0,\pi]^{d-3}\times [0,2\pi]\times [\kappa,1]&\to \mathbb{R}^d,\\
        (r,\theta_1,\dots,\theta_{d-2})&\mapsto \vect{r \cos(f)\\ r\sin(f) v},\\
        \text{where }v&\coloneqq\vect{\cos \theta_1\\ \sin\theta_1\cos\theta_2\\\vdots\\\sin\theta_1\dots \cos \theta_{d-2}\\\sin\theta_1\dots \sin \theta_{d-2}}\in \mathbb{S}^{d-2}.
    \end{align*}
    In $d=2$, $v$ takes the values $\pm 1$.
    
    Now, the area factor $\sqrt{\det Dp^tDp}$ in the area formula is dependent on 
    $$Dp=\begin{pmatrix}
        \cos f-r\sin ff' &0\\ (\sin f+r\cos f f')v & r\sin f D_\theta v
    \end{pmatrix}.$$
    As $|v|=1$, the matrix $Dp^tDp$ is a diagonal matrix and since $v$ is the usual parameterization of the unit sphere, we conclude for $f\in C^1$:
    \begin{align*}
        \Per(E;A_{\kappa,1})&=(d-1)\omega_{d-1}\int\limits_\kappa^1 r^{d-2}\sin^{d-2}f(r)\sqrt{1+r^2f'^2(r)}\dd r.
    \end{align*}
    This is in accordance with the formula for spheres in \cite{bogelein2017quantitative}. For general $E$, we can have jumps that contribute as 
    $$(d-1)\omega_{d-1}\int\limits_\kappa^1 r^{d-1}\int\limits_{f^-(r)}^{f^+(r)}\sin^{d-2}(\sigma)\dd \sigma \dd\mathcal{H}^0(r).$$
    Here, $f^{\pm}$ denote the left and right limit of $f$ at $r$.
    This can also be seen as a limit of the smooth version. By approximating the set of finite perimeter $E$ with smooth sets, we can achieve an $f$ that is piecewise $C^1$ and only need to bound the smooth integral and the jump part.\\
    In the same fashion the mass constraint is computed
    $$|E|=(d-1)\omega_{d-1}\int\limits_\kappa^1 r^{d-1}\int\limits_0^{f(r)}\sin^{d-2}(\sigma)\dd\sigma\dd r.$$

    Fix $\theta=\pi\cdot \frac{9}{10}$. If there exists $r$ s.t.\ $f(r)\geq \theta$, then there exists an $r'$ s.t.\ $f(r')\leq \frac{\pi}{2}$ (w.l.o.g.\ $r'<r$ else switch them). This $r'$ exists as we assumed $|E|\leq \frac{1}{2}|A_{\kappa,1}|$.
    Thus, for $r''\geq r'$ with $\theta\geq f(r'')\geq \frac{\pi}{2}\geq f(r')$:
    \begin{align*}
        \frac{\Per(E;A_{\kappa,1})}{(d-1)\omega_{d-1}}&\geq \int\limits_{r'}^{r''} r^{d-2}\sin^{d-2}f(r)\sqrt{1+r^2f'^2(r)}\dd r\\
        &\geq \kappa^{d-2}\sin^{d-2}\theta\int\limits_{r'}^{r''} rf'(r)\dd r\\
        &\geq \kappa^{d-1}\sin^{d-2}\theta(f(r'')-f(r')).
    \end{align*}
    Similarly, with the argument of \cite{bogelein2017quantitative} for $\theta\geq f^+(r)\geq f^-(r)\geq \frac{\pi}{2}$:
    \begin{align*}
        r^{d-1}\int\limits_{f^-(r)}^{f^+(r)}\sin^{d-2}(\sigma)\dd \sigma&\geq \frac{\kappa^{d-1}}{2}\left(1-\frac{f^-(r)}{\pi}\right)^{d-3}(f^+(r)-f^-(r))\\
        &\geq \frac{\kappa^{d-1}}{2}\left(1-\frac{\theta}{\pi}\right)^{d-3}(f^+(r)-f^-(r)).
    \end{align*}
    Combining both estimates, we see that if there is $r$ with $f(r)\geq\theta$, then there exists a $c>0$ independent of $\kappa$ for $\kappa>\kappa_0$ s.t.\ 
    $$\frac{\Per(E;A_{\kappa,1})}{(d-1)\omega_{d-1}}\geq c (\theta-\frac{\pi}{2}).$$
    Thus, the isoperimetric type inequality holds true in this case.

    Now, we can assume that $f(r)<\theta$. Then, we either have $f(r)\leq \frac{\pi}{2}$ and $$\sin^{d-2}(\sigma)\leq \sin^{d-2}(f(r))$$ for $\sigma\leq f(r)$, or $\frac{\pi}{2}<f(r)<\theta$ which implies 
    $$\frac{\sin^{d-2}(f(r))}{\sin^{d-2}(\theta)}\geq 1.$$ 
    Therefore,
    $$\sin^{d-2}(\sigma)\leq \sin^{d-2}(f(r))\frac{1}{\sin^{d-2}(\theta)}.$$
    We can use a trivial estimate to bound the perimeter
    $$\frac{\Per(E;A_{\kappa,1})}{(d-1)\omega_{d-1}}\geq \int\limits_\kappa^1 r^{d-2}\sin^{d-2}(f(r))\dd r.$$
    Together with the fact that $r$ is bounded above and below, we get
    \begin{align*}
        \frac{\Per(E;A_{\kappa,1})}{(d-1)\omega_{d-1}}&\geq \int\limits_\kappa^1 r^{d-2}\sin^{d-2}(f(r))\dd r\\
        &\geq \frac{1}{\pi} \int\limits_\kappa^1 r^{d-1}\int\limits_0^{f(r)}\sin^{d-2}(f(r))\dd\sigma\dd r\\
        &\geq \frac{\sin^{d-2}(\theta)}{\pi} \int\limits_\kappa^1 r^{d-1}\int\limits_0^{f(r)}\sin^{d-2}(\sigma)\dd\sigma\dd r= \frac{\sin^{d-2}(\theta)}{\pi}|E|.\qedhere
    \end{align*}
\end{proof}

Having this inequality, we can complete the proof of Proposition~\ref{MinkowskiContent}.

\begin{proof}[Proof of Proposition~\ref{MinkowskiContent} --- Part 2]
    Proposition~\ref{Prop:relIsopAnnuli} yields 
    $$\Per(E;A_1)\geq c|E|$$
    for all $E$ s.t.\ $|E|\in \left(\frac{|A_1|}{10}, \frac{9|A_1|}{10}\right)$. As this bounds the isoperimetric function, we can follow the same argumentation as in the first part of this proof to get $v^{-1}(|E_\delta|)\geq v^{-1}(|E|)+\delta$ for $v(r)=e^{c\cdot r}$, i.e., 
    $$\log(|E_\delta\setminus E|)\geq \log(|E|)+c\delta.$$
    With $1+x\leq e^{x}$, we get for $E$ with $|E|=\frac{1}{2}|A_1|$
    $$\frac{|E_\delta\setminus E|}{|A_1|}\gtrsim \delta.$$
    The constant $\frac{1}{2ce^{c|E|}}$ does not vanish as $\kappa\to 1$.
\end{proof}

\begin{remark}
    The usual scaling with the exponent $\frac{n-1}{n}$ on the RHS in the statement of Proposition~\ref{Prop:relIsopAnnuli} for suitable $n\in\mathbb{N}$ (not even $n=d$) cannot be expected as one would expect for the optimal halving set 
    \begin{align*}
        \Per(E;A)&=\omega_{d-1}(1-\kappa^{d-1})\\
        \text{and }|E|&=\frac{1}{2}\omega_d(1-\kappa^d).
    \end{align*}
    Thus, 
    $$\frac{\Per(E;A)}{|E|}\to 2\frac{(d-1)\omega_{d-1}}{d\omega_d}>0,$$
    but both quantities converge linearly (in $1-\kappa$) to zero which prohibits better exponents.
\end{remark}

Now, we can use these bounds on the outer Minkowski content as an estimate for the isoperimetric functional defined in \cite{ledoux2001concentration}. This bound yields our result for $\tau$ using \cite[Proposition 2.1]{ledoux2001concentration}.
 
\begin{proof}[Proof of Lemma~\ref{tauEstimate}]
First, we observe that the modulus of continuity is preserved for our scheme. Namely, $u(x,t)$ is bound by the same modulus as $g$, i.e., $\omega_u\leq \omega_g$.

As in Proposition~\ref{Prop:Monotonicity}, our scheme satisfies for $u\leq v$:
\begin{align*}
    G_h u\leq G_h v \tag{Monotonicity}
\end{align*}
and 
\begin{align*}
    G_h(au+c)=a\cdot G_hu+c \tag{Relabeling}
\end{align*}
as well as that the continuous evolution is homogeneous in space.

Thus, if $|g(x)-g(y)|\leq \omega(|x-y|)$ or equivalently for $\xi\in\mathbb{R}^d$
$$g(x)-g(x+\xi)\leq \omega(|\xi|),$$
we have 
$$G_h(g(x)-g(x+\xi))=u^1(x)-u^1(x+\xi)\leq \omega(|\xi|).$$ 
This implies $\omega_u\leq \omega_g$.

Now, we can reformulate the equation for $\tau$ with the cumulative distribution function in a geometric way:
$$\tau_2=\frac{|\{u\in [m_\mathbb{P},m_\mathbb{P}+\varepsilon)\}|}{|A_r|}$$
or equivalently with $\varphi(x)=\frac{u(rx+x_0)-u(x_0)}{r^2}$, recall $\widetilde m_\mathbb{P}=\frac{m_\mathbb{P}-u(x_0)}{r^2}, \widetilde\varepsilon=\frac{\varepsilon}{r^2}\in o_r(1)$:
$$\tau_2=\frac{|\{\varphi(x)\in[\widetilde m_\mathbb{P},\widetilde m_\mathbb{P}+\widetilde\varepsilon)\}|}{|A_1|}.$$ 
With $E\coloneqq \{\varphi>\widetilde m_\mathbb{P}\}$ ($|E|=\frac{1}{2}|A_1|$) and $E_r\coloneqq E+B_r\cap A_1=\{x\in A_1: \dist(x, E)<r\}$ this can be reformulated as
$$\tau_2\geq \frac{|E_\delta\setminus E|}{|A_1|}$$
where $\delta$ is given by 
$$\frac{\omega_g^{-1}(\varepsilon)}{r}\leq \frac{\omega_u^{-1}(\varepsilon)}{r}=\omega_\varphi^{-1}(\widetilde \varepsilon).$$
This can be seen by the definition as for $x$ s.t.\ $\varphi(x)=\widetilde m_\mathbb{P}$ and $y\in E_\delta\setminus E$ we have 
$$|\varphi(y)-\widetilde m_\mathbb{P}|=|\varphi(y)-\varphi(x)|\leq \omega_\varepsilon(|x-y|)\leq \omega_\varepsilon(\delta)\leq \widetilde\varepsilon.$$
Therefore, with our bound of the Minkowski content by Proposition~\ref{MinkowskiContent}, we achieve the desired bound.
\end{proof}

We will compute the probability of convergence in multiple steps using a union bound in the neighborhoods.

If the initial datum $g$ is bounded uniformly continuous, we can find for any $h>0$ small enough an intensity of the point process s.t.\ the continuous and discrete evolutions are uniformly close with high probability.

\begin{lemma}[Convergence of medians]
    \label{Thm:ConvMedians}
    Let $g\in \BUC(D;\mathbb{R})$ be given. For any $\delta>0$, there exists a sequence of intensities $\Lambda(r, \omega_g)<\infty$ s.t.\ with probability $\mathbb{P}_N>1-h^\frac{\delta}{8}$ we have:
    $$\norm{Q_{h,t}^{\mathbb{P}}g-Q_{h,t}^{N}g}_{L^\infty}\leq h^\frac{\delta}{8} T\norm{g}_{L^\infty}.$$
    
    For $g\in \Lip(D;\mathbb{R})$, we have an explicit version. Let the intensity of the point process be $\Lambda=2\frac{d+5}{k^2}r^{-d-2}\log^2\frac{1}{r}$ with $\frac{1}{k}\coloneqq \Lip(g)$. Then, with probability $\mathbb{P}_N>1-2CT\frac{d+5}{k^2}r\log^2\frac{1}{r} $:
    $$\norm{Q_{h,t}^{\mathbb{P}}g-Q_{h,t}^{N}g}_{L^\infty}\leq \frac{1}{\sqrt{\frac{1}{2}\log \frac{1}{h}}} T\norm{g}_{L^\infty}.$$
\end{lemma}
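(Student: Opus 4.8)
The plan is to propagate the single-step median comparison along the scheme via monotonicity and a union bound over all neighborhoods and all time steps. First I would fix $h>0$ (equivalently $r=\sqrt h$) and apply the Corollary after Lemma~\ref{Lemma:DKW} at a single point $x$: for each $x\in D$ the random variable $X=u_h^n(\cdot)$ restricted to $A_r(x)$ has continuous median $m_\mathbb{P}$ and discrete (empirical) median $m_N$, and
$$\mathbb{P}_N(|m_N-m_\mathbb{P}|>\varepsilon)\leq e^{-2\Lambda\tau_1^2}+2e^{2\tau_2}e^{-2\Lambda\tau_2^2}.$$
Here the effective number of samples inside $A_r(x)$ is $\Lambda|A_r|=\Lambda\,\omega_d(1-\kappa^d)r^d$ in expectation; one has to be slightly careful that the number of Poisson points in $A_r(x)$ is itself random, but a standard concentration bound for the Poisson distribution shows it is $\gtrsim \Lambda r^d$ except on an event of exponentially small probability, which gets absorbed into the final estimate. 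Then I would invoke Lemma~\ref{tauEstimate} to get $\tau_{1/2}\gtrsim \omega_g^{-1}(\varepsilon)/r$, using that by the first lines of the proof of Lemma~\ref{tauEstimate} the modulus of continuity is preserved under the scheme, so this bound applies uniformly at every time step $n$ and every center $x$.

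Next I would choose the parameters. Setting $\varepsilon\coloneqq \omega_g\!\big(k\sqrt{\tfrac12\log\tfrac1h}\,r\big)$ in the general case (so that $\omega_g^{-1}(\varepsilon)/r \gtrsim \sqrt{\log(1/h)}$), the single-point failure probability is bounded by something like $3\exp(-c\,\Lambda r^d\log(1/h))$. Summing over the $\sim N = \Lambda$ points (or, in the i.i.d.\ model of Section~\ref{DiscreteCase}, over the $N$ sample points) and over the $\sim T/h$ time steps gives a total failure probability
$$\frac{T}{h}\,\Lambda\cdot 3\exp\!\big(-c\,\Lambda r^d\log\tfrac1h\big),$$
and choosing $\Lambda = \Lambda(r,\omega_g)$ large enough — concretely of polynomial-in-$1/r$ order times $\log^2(1/r)$, as in the explicit Lipschitz case with $\Lambda = 2\frac{d+5}{k^2}r^{-d-2}\log^2\frac1r$ — forces this product below $h^{\delta/8}$. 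On the complementary event, at every time step and every point $|m_N - m_\mathbb{P}| \le \varepsilon$; since $\varepsilon/r^2 = \widetilde\varepsilon \le 1/\sqrt{\tfrac12\log(1/h)}$ (in the Lipschitz normalization $\varepsilon = r^2\widetilde\varepsilon$, $\omega_g^{-1}(\varepsilon) = k\varepsilon$), and since applying $G_h$ once to two functions that are $\varepsilon$-close in $L^\infty$ keeps them $\varepsilon$-close by monotonicity (Proposition~\ref{Prop:Monotonicity}: if $u\le v+\varepsilon$ then $G_h u\le G_h v+\varepsilon$), an induction over the $\lfloor t/h\rfloor \le T/h$ steps of the scheme accumulates the per-step discrepancy $\varepsilon$ at most $T/h$ times. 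Renormalizing the error by $\|g\|_{L^\infty}$ (the range of the evolution stays within that of $g$) yields
$$\norm{Q^{\mathbb P}_{h,t}g - Q^{N}_{h,t}g}_{L^\infty} \le \frac{T}{h}\,\varepsilon\,\frac{1}{\|g\|_{L^\infty}}\cdot\|g\|_{L^\infty}\lesssim \sqrt{\tfrac{1}{2}\log\tfrac1h}^{-1}\,T\,\|g\|_{L^\infty}$$
after reabsorbing constants, which is the claimed bound; the general $\BUC$ case is the same with $\omega_g$ in place of the Lipschitz modulus and the bound $h^{\delta/8}T\|g\|_{L^\infty}$.

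The main obstacle I expect is bookkeeping the three competing scales so that everything closes: the per-step median error $\varepsilon$ must be small enough that summing $T/h$ of them is $o(1)$, yet $\varepsilon$ must not be so small that the DKW exponent $\Lambda\tau^2 \sim \Lambda r^{-2}\omega_g^{-1}(\varepsilon)^2$ fails to dominate $\log(T/h) + \log\Lambda$ after the union bound; the sweet spot $\varepsilon \sim \omega_g(r\sqrt{\log(1/h)})$ is exactly what makes the logarithm in the exponent beat the logarithm from the union bound. A secondary technical point is that the neighborhoods $A_r(x)$ are uncountably many, so one cannot literally union-bound over all $x\in D$; but since the scheme only ever evaluates medians at the $N$ points of $P$, it suffices to control the $N$ events indexed by $P$ (and, for the i.i.d.\ model, this is immediate, while for the Poisson model one conditions on the point configuration). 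Handling the randomness of the point count inside each ball — keeping $\tau_{1/2}$ bounds valid when $A_r(x)\cap P$ is atypically sparse — is the one place where a small extra concentration argument for the Poisson law is needed, but it contributes only an exponentially small term that is dwarfed by the main estimate.
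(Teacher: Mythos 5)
Your high-level strategy is the same as the paper's: single-step median comparison via the DKW-based corollary, the $\tau$-bound from Lemma~\ref{tauEstimate} combined with modulus preservation, a union bound over the sample points and the $T/h$ time steps, a choice of $\varepsilon$ and $\Lambda$ closing the scales, and finally a non-expansiveness/monotonicity iteration lifting the per-step error to the whole evolution.

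There is, however, a genuine error in your parameter choice. You set $\varepsilon\coloneqq \omega_g\bigl(k\sqrt{\tfrac12\log\tfrac1h}\,r\bigr)$ aiming at $\tau=\omega_g^{-1}(\varepsilon)/r\gtrsim\sqrt{\log(1/h)}$. But $\tau_{1/2}$ is by definition a difference of CDF values and so is bounded by $1/2$, which the bound $\tau\gtrsim\sqrt{\log(1/h)}$ contradicts for small $h$; equivalently, the bound $\tau\gtrsim\omega_g^{-1}(\varepsilon)/r$ from Lemma~\ref{tauEstimate} is only useful when $\omega_g^{-1}(\varepsilon)\lesssim r$, which your $\varepsilon$ violates. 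With that $\varepsilon\sim r\sqrt{\log(1/h)}$ the accumulated error $\tfrac{T}{h}\varepsilon\sim \tfrac{T}{r}\sqrt{\log(1/h)}$ also blows up rather than vanishing. Later in your proposal you implicitly revert to the correct normalization $\varepsilon=h\widetilde\varepsilon$ with $\widetilde\varepsilon=1/\sqrt{\tfrac12\log(1/h)}$, which gives $\tau\sim k r\widetilde\varepsilon\ll1$ and $\tfrac{T}{h}\varepsilon=T\widetilde\varepsilon$, but this is inconsistent with your initial definition, and as stated the argument does not close. This is exactly the ``bookkeeping of the three competing scales'' you flag as the likely obstacle, and it is indeed where your version slips.

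Separately, your treatment of the random point count is looser than the paper's. You propose a Poisson concentration bound to keep the number of samples in each $A_r(x)$ close to $\Lambda r^d\omega_d$ and then ``absorb'' the bad event. The paper avoids any concentration step by computing the expectation of $e^{-\tau^2 n}$ exactly: first conditioning on the binomial count of points in $B_r(x)$ and summing the generating function $\sum_n e^{-\tau^2 n}\binom{N-1}{n}(r^d)^n(1-r^d)^{N-1-n}=(1-r^d(1-e^{-\tau^2}))^{N-1}$, then averaging over the Poisson distribution of $N$. This yields a clean closed-form bound $\Lambda e^{-r^d\tau^2\Lambda/2}$ with no extra error term. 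Your route can be made to work, but the extra concentration event has a probability of the same polynomial-in-$r$ order as the main estimate in the logarithmic regime, so ``exponentially small and dwarfed'' is not quite right and some extra care would be needed there as well.
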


Therefore, we can conclude the uniform convergence of our non-local discrete scheme to the viscosity solution of mean curvature flow using Lemma~\ref{UniformConvergence} and Lemma~\ref{Thm:ConvMedians}.

\begin{thm}
\label{Cor:Conv}
    Let $g\in BUC(D;\mathbb{R})$ the initial datum be given. Then, there is a sequence $r(N)\to 0$ as $N\to \infty$ s.t.\ the discrete evolution converges a.s.\ uniformly to the time rescaled viscosity solution to mean curvature flow as $N\to \infty, r(N)\to 0$
    $$Q_{h,t}^{N}g\to u(x,t)\text{ in } L^\infty(D\times [0,T]).$$
    If $g\in \Lip(D)$, this assumption can be rewritten as $r\gg \frac{\log N}{N^\frac{1}{d+2}}$.
\end{thm}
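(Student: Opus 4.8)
The plan is to combine the two main ingredients already at hand: the deterministic convergence of the space-continuous non-local scheme to the viscosity solution (Lemma~\ref{UniformConvergence}, together with Propositions~\ref{Prop:Monotonicity}, \ref{Prop:Consistency}, \ref{Prop:TimeContinuity}), and the probabilistic comparison between the discrete and continuous schemes (Lemma~\ref{Thm:ConvMedians}). By the triangle inequality in $L^\infty(D\times[0,T])$ we estimate
\begin{align*}
    \norm{Q_{h,t}^{N}g-u}_{L^\infty}\leq \norm{Q_{h,t}^{N}g-Q_{h,t}^{\mathbb{P}}g}_{L^\infty}+\norm{Q_{h,t}^{\mathbb{P}}g-u}_{L^\infty}.
\end{align*}
The second term tends to $0$ as $h=r^2\to 0$ purely deterministically by Lemma~\ref{UniformConvergence}. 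For the first term, Lemma~\ref{Thm:ConvMedians} gives, along a suitable sequence of intensities $\Lambda(r,\omega_g)$, an event of probability at least $1-h^{\delta/8}$ on which this term is bounded by $h^{\delta/8}T\norm{g}_{L^\infty}$, which also tends to $0$.

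First I would fix the coupling: choose for each $N$ the radius $r=r(N)$ so that the intensity required in Lemma~\ref{Thm:ConvMedians} matches $N$. In the Lipschitz case this means solving $N=2\frac{d+5}{k^2}r^{-d-2}\log^2\frac1r$ for $r$, which gives exactly the stated scaling $r\gg \log N / N^{1/(d+2)}$ (the inequality rather than equality accommodates the freedom in the constants and the $\log$ factors); in the general $\BUC$ case one only needs that $\Lambda(r,\omega_g)<\infty$ for each $r$, so $r(N)\to 0$ can be chosen to decay arbitrarily slowly. Then I would pass to almost sure convergence via Borel--Cantelli: along the sequence $r(N)\to 0$, the failure probabilities are bounded by $h^{\delta/8}=r^{\delta/4}$ (respectively $2CT\frac{d+5}{k^2}r\log^2\frac1r$ in the Lipschitz case), and since $r(N)\to 0$ one can, if necessary, thin the sequence or choose $r(N)$ decaying fast enough in $N$ so that $\sum_N r(N)^{\delta/4}<\infty$. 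Hence the bad events occur only finitely often almost surely, so on a full-measure set we have $\norm{Q_{h,t}^{N}g-Q_{h,t}^{\mathbb{P}}g}_{L^\infty}\to 0$; combined with the deterministic convergence of $Q_{h,t}^{\mathbb{P}}g$ to $u$, this yields $Q_{h,t}^{N}g\to u$ in $L^\infty(D\times[0,T])$ almost surely.

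The main obstacle is making the Borel--Cantelli summability compatible with the prescribed regime $r\gg \log N / N^{1/(d+2)}$: the failure probability in Lemma~\ref{Thm:ConvMedians} is only polynomially small in $r$, and $r$ itself is only polynomially small in $N$, so one must check that $\sum_N r(N)^{\delta/4}$ (or $\sum_N r(N)\log^2\frac1{r(N)}$) can indeed be made finite for an admissible choice of $r(N)$ — this is where the slack in the "$\gg$" and the freedom to take $\delta$ not too small, or to take $r(N)$ at the slow end of the admissible range, is used. A secondary point requiring care is uniformity in $t\in[0,T]$: the union bound over time steps is already incorporated into Lemma~\ref{Thm:ConvMedians} (the factor $T$ in the bound reflects summing the per-step error over the $\mathcal{O}(T/h)$ iterations), and the time rescaling by $1/c_A$ only affects the final time and not the convergence, so no additional work is needed there. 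Finally, I would record that the limit $u$ is the $1/c_A$-time-rescaled viscosity solution, consistent with the statement of Lemma~\ref{UniformConvergence} and Remark~\ref{Rem:constantCA}.
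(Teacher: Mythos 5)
Your decomposition via the triangle inequality into the deterministic error (Lemma~\ref{UniformConvergence}) and the random error (Lemma~\ref{Thm:ConvMedians}), followed by Borel--Cantelli, is exactly what the paper intends when it says the theorem follows ``using Lemma~\ref{UniformConvergence} and Lemma~\ref{Thm:ConvMedians},'' and you correctly flag that passing from the high-probability bound of Lemma~\ref{Thm:ConvMedians} to almost-sure convergence is the step requiring care. However, the specific resolutions you offer for the summability issue do not work, and as written the argument has a gap. Thinning the sequence in $N$ would yield almost-sure convergence only along a subsequence. ``Choosing $r(N)$ decaying fast enough'' is not available in the Lipschitz case, since the regime imposes a \emph{lower} bound $r\gg\log N/N^{1/(d+2)}$ below which the intensity is insufficient for the median approximation. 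And taking $r(N)$ ``at the slow end'' is the wrong direction: the failure probability $2CT\frac{d+5}{k^2}\,r\log^2\frac1r$ from Lemma~\ref{Thm:ConvMedians} is \emph{increasing} in $r$ for small $r$, so slower decay of $r$ makes the probabilities larger. In fact, using that polynomial bound with $r$ anywhere in the allowed regime gives failure probabilities of order $N^{-1/(d+2)}$ up to logarithms, whose sum over $N$ diverges for every $d\geq 2$.

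The correct observation is that the polynomial bound in Lemma~\ref{Thm:ConvMedians} is attained at the \emph{minimal} admissible intensity $\Lambda(r)=2\frac{d+5}{k^2}r^{-d-2}\log^2\frac1r$, whereas in the regime $r\gg\log N/N^{1/(d+2)}$ the actual intensity satisfies $N\gg\Lambda(r)$. The intermediate estimate in the proof of that lemma,
$$
\mathbb{P}_N\Bigl(\bigcup_{x,t}\{|m_{\mathbb{P}}u-m_N u|>h\widetilde\varepsilon\}\Bigr)\lesssim\frac{T}{h}\,\Lambda\,e^{-\frac12 r^d\tau^2\Lambda},
$$
is decreasing in $\Lambda$ once $\Lambda$ is large, so it should be evaluated at $\Lambda=N$ rather than at $\Lambda(r)$. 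With $\tau=k\widetilde\varepsilon r$ and $\widetilde\varepsilon=1/\sqrt{\log\frac1r}$ this yields a failure probability of order $\frac{TN}{r^2}\exp\bigl(-\frac{k^2 r^{d+2}N}{2\log(1/r)}\bigr)$, and the hypothesis $r\gg\log N/N^{1/(d+2)}$ forces $r^{d+2}N/\log\frac1r\gtrsim\log^{d+1}N\to\infty$, so this decays faster than any polynomial in $N$. The sum over $N$ is then finite and Borel--Cantelli applies to the full sequence. The same point — use the intermediate exponential estimate with the actual number of points rather than the simplified polynomial bound at the minimal intensity — is what makes the $\delta$-bookkeeping work out in the general $\BUC$ case as well.
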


\begin{proof}[Proof of Lemma~\ref{Thm:ConvMedians}]
    In the following let $\tau$ be a lower bound on $\tau_{1/2}$.
    First, we fix a time slice $t$ and examine the probability that one step behaves in the way we want:
    \begin{align*}
        \mathbb{P}_N&\left(\bigcap\limits_{x\in P}\{|m_\mathbb{P}u(x)-m_Nu(x)|<h\widetilde\varepsilon\}\right)\\
        &\geq 1-\mathbb{P}_N\left(\bigcup\limits_{x\in P}\{|m_\mathbb{P}u(x)-m_Nu(x)|>h\widetilde\varepsilon\}\right)\\
        &\geq 1-\sum\limits_{x\in P}\mathbb{P}_N\left(\{|m_\mathbb{P}u(x)-m_Nu(x)|>h\widetilde\varepsilon\}\right).
    \end{align*}
    To bound this sum, we will combine the local estimate from above
    \begin{align*}
        \mathbb{P}_N\left(|m_\mathbb{P}u-m_nu|>h\widetilde\varepsilon\left|\#(P\cap B_r(x))=n+1\right.\right)&\lesssim e^{-\tau^2 n}
    \end{align*}
    with a counting argument, given that the total number of points $N$ is known.
    \begin{align*}
        \mathbb{P}_N\left(|m_\mathbb{P}u-m_nu|>h\widetilde\varepsilon\left|N\right.\right)&\lesssim \sum\limits_{n=0}^{N-1} e^{-\tau^2 n}\mathbb{P}_N(\#(P\cap B_r(x))=n+1)\\
        &=\sum\limits_{n=0}^{N-1} e^{-\tau^2 n}\binom{N-1}{n}(r^d)^n(1-r^d)^{N-1-n}\\
        &=\left(1-r^d\left(1-e^{-\tau^2}\right)\right)^{N-1}\\
        &\leq e^{-r^d\left(1-e^{-\tau^2}\right)(N-1)}\\
        &\leq e^{-(N-1)r^d\frac{\tau^2}{2}}.
    \end{align*}
    Here, we used $1-x\leq e^{-x}$ and $1-e^{-x}\geq \frac{x}{2}$ for $0\leq x\leq 1$.

    Finally, we can combine this with the Poisson-Point-Process which has a random number of points sampled.
    \begin{align*}
        \mathbb{P}_N&\left(\bigcup\limits_{x\in P}\{|m_\mathbb{P}u(x)-m_Nu(x)|>h\widetilde\varepsilon\}\right)\\
        &=\sum\limits_{N=0}^\infty \mathbb{P}_N\left(\left.\bigcup\limits_{x\in P}\{|m_\mathbb{P}u(x)-m_Nu(x)|>h\widetilde\varepsilon\}\right| N\right)\frac{\Lambda^N}{N!}e^{-\Lambda}\\
        &\lesssim \sum\limits_{N=0}^\infty \frac{Ne^{-(N-1)r^d\frac{\tau^2}{2}}\Lambda^N}{N!}e^{-\Lambda}\\
        &=\Lambda \sum\limits_{N=0}^\infty \frac{e^{-Nr^d\frac{\tau^2}{2}}\Lambda^N}{N!}e^{-\Lambda}\\
        &=\Lambda e^{-\left(1-e^{r^d\frac{\tau^2}{2}}\right)\Lambda}\\
        &\leq \Lambda e^{-r^d\frac{\tau^2}{2}\Lambda}.
    \end{align*}
    This can be combined with a union bound in time under the observation that we have $\frac{T}{h}$ time steps of length $h$:
    \begin{align*}
        \mathbb{P}_N\left(\bigcup\limits_{x,t}\{|m_\mathbb{P}u(x)-m_Nu(x)|>h\widetilde\varepsilon\}\right)&\lesssim \frac{T}{h}\Lambda e^{-r^d\frac{\tau^2}{2}\Lambda}.
    \end{align*}
    With $\tau=\frac{\omega_g^{-1}(\widetilde\varepsilon h)}{\sqrt{h}}$ and $\Lambda=(c(r)+d)\frac{2r^{-(d-2)}}{\left(\omega_g^{-1}(\widetilde\varepsilon r^2)\right)^2}\log \frac{1}{r}$ this yields the probability:
    \begin{align*}
        \mathbb{P}_N\left(\bigcup\limits_{x,t}\{|m_\mathbb{P}u(x)-m_Nu(x)|>h\widetilde\varepsilon\}\right)&\lesssim T\frac{2(c+d)}{\left(\omega_g^{-1}(\widetilde\varepsilon r^2)\right)^2}\log\frac{1}{r}r^c\\
        &=2T(c+d)\frac{r^c}{(\omega_g^{-1}(r^{2+\frac{\delta}{4}}))^2}.
    \end{align*}
    In the last line, we plugged in $\widetilde\varepsilon=h^\frac{\delta}{8}$. Observe that $(c+d)r^c\to 0$ as $c\to \infty$. Thus, we can find a $c(r)$ for each $r>0$ s.t.\ 
    $$2T(c+d)\frac{r^c}{(\omega_g^{-1}(r^{2+\frac{\delta}{4}}))^2}\leq r^\frac{\delta}{4}$$
    as $r\to 0$.

    For $g$ Lipschitz, we have the bound $\tau=k\widetilde\varepsilon r$ and with the explicit choice of $\Lambda=2\frac{d+5}{k^2}r^{-d-2}\log^2\frac{1}{r}, \widetilde\varepsilon=\frac{1}{\sqrt{\log\frac{1}{r}}}$:
    \begin{align*}
        \mathbb{P}_N\left(\bigcup\limits_{x,t}\{|m_\mathbb{P}u(x)-m_Nu(x)|>h\widetilde\varepsilon\}\right)&\lesssim \frac{2T}{r^2}\frac{d+5}{k^2}r^{-d-2}\log^2\frac{1}{r} e^{(d+5)\log r}\\
        &=2T\frac{d+5}{k^2}r\log^2\frac{1}{r} .
    \end{align*}
    This bound goes to zero as $r\to 0$.

    To lift this result to the evolution we have to iterate the error estimate:
    \begin{align*}
        \norm{\left(G_h^{\mathbb{P}}\right)^ng-\left(G_h^N\right)^ng}_{L^\infty}&\leq \norm{(m_\mathbb{P}-m_N)\left(G_h^{\mathbb{P}}\right)^{n-1}g}_{L^\infty}\\
        &\quad+\norm{m_N\left(\left(G_h^{\mathbb{P}}\right)^{n-1}g-\left(G_h^N\right)^{n-1}g\right)}_{L^\infty}\\
        &\leq h\widetilde\varepsilon\norm{g}_{L^\infty}+\norm{\left(G_h^{\mathbb{P}}\right)^{n-1}g-\left(G_h^N\right)^{n-1}g}_{L^\infty}\\
        &\leq n h\widetilde\varepsilon\norm{g}_{L^\infty}\leq T\widetilde\varepsilon\norm{g}_{L^\infty}.\qedhere
    \end{align*}
\end{proof}

\section{\texorpdfstring{${\TL}^2$-}{TL2-}convergence \texorpdfstring{---}{-} Heat Flow}
\label{TL-convergence}

In this section, we will prove that the discrete sampled heat flow converges in the mean field limit to its continuous counterpart. Here, we show ${\TL}^2$-convergence in the setting of logarithmically many points in the local neighborhoods. This is the optimal regime as discussed in Remark~\ref{Rem:Optimality-r}. It should be noted that the same techniques as in the previous sections can be used to obtain $L^\infty$-convergence with quadratically (for Lipschitz functions) many points. The precise statement can be found in Theorem~\ref{Thm:TLConv} and Corollary~\ref{Cor:HeatFlows}. For more general settings and extensions, see Corollary~\ref{Cor:generalSettingTL}. In these, we extend our proof to other domains and onto manifolds. In addition, we consider densities, different kernels and other sampling processes. Another consequence is a separated weak convergence result for the MBO scheme which is written down in Corollary~\ref{Cor:MBOScheme}. This result extends~\cite[Corollary 1]{jonatim} to the optimal regime.

Let again $D\coloneqq\mathbb{T}^d$ be the $d$-dimensional flat torus. Additionally, let $P_N$ denote a realization of the point process with $N$ uniformly distributed points in $D$.

To speak about a $L^2$-type convergence of $u_N\to u$, we need to compare a function on the continuum to a discrete realization. This can be done using transport plans which then compare the values closest (in a graph-Wasserstein-$2$-distance) to the discrete samples.

To this end, we defined the ${\TL}^2$-convergence in Definition~\ref{Def:TL2} and Definition~\ref{Def:TL2Conv}. 

We want to show that the solution to the heat equation on a discrete space converges to the continuous heat flow as long as the kernel size is large compared to the connectivity radius which depends on $N$. 

To give the precise setting, we will examine the implicit Euler scheme with a non-normalized but averaged Laplacian (the random walk Laplacian). Here, we normalize our kernel by the approximated number of points in a ball which is given by their volume fraction multiplied with the entire number of points $Nr^d\omega_d$.
Then, the implicit Euler scheme of the average filter can be written as
$$\frac{u_N^{n+1}(x)-u_N^n(x)}{\tau}=\frac{1}{Nr^{d+2}\omega_d}\left(\sum\limits_{y\in B_r(x)}u_N^{n+1}(y)-u_N^{n+1}(x)\right).$$
This can be reformulated as an minimizing movement:
\begin{align}
    \label{Eq:HE-MM}
    u_N^{n+1}=\argmin\limits_u\left\{E_{r,N}(u)+\frac{1}{2\tau}d_N^2(u,u_N^n)\right\}.
\end{align}
Here, the energy is a discrete Dirichlet energy
\begin{align*}
    E_{r,N}(u)&\coloneqq \frac{1}{2N}\sum\limits_{x\in P_N}\frac{1}{Nr^d\omega_d}\sum\limits_{y\in B_r(x)}\left|\frac{u(x)-u(y)}{r}\right|^2
\end{align*}
and the distance takes the form of a discrete $L^2$-distance
\begin{align*}
    d_N^2(u,u_N^n)&=\frac{1}{N}\sum\limits_{x\in P_N}|u(x)-u_N^n(x)|^2.
\end{align*}

To speak about the ${\TL}^2$-convergence, we will use the continuous measure
$$\mu\coloneqq \mathcal{L}^d\llcorner_{D}$$ which is the Lebesgue measure and a discrete approximation adapted to our point process $P_N$
$$\mu_N\coloneqq\frac{1}{N}\sum\limits_{x\in P_N}\delta_{x}.$$

In analogy to Section~\ref{LinfConv}, we define the update $G^{r, N}_{\tau} g\coloneqq u^1_N$ where $u^n_N$ is the evolution given $g$ as the initial datum. Additionally, let $Q^{r, N}_{\tau, t}g\coloneqq \left(G^{r,N}_\tau\right)^j g$ for $t\in [j\tau, (j+1)\tau)$ with $j\in\mathbb{N}_{\geq 0}$. 

For the proof of the convergence, we will employ a $\Gamma$-convergence of the associated energies which is done in the following lemma. This lemma can be found as \cite[Theorem 12]{jonatim} which extends \cite[Theorem 1.4]{SlepcevTrillos2} to the manifold setting.

\begin{lemma}[$\Gamma$-convergence and compactness of Dirichlet energies, {\cite[Theorem 12]{jonatim}}]
    \label{Lemma:GammaConv}
    Given $\mu_N, \mu$ and the energies $E_{r,N}, E$ as before. Then, there exists a constant $C>0$ s.t.\ for $r\gg \frac{\log^\frac{1}{d} N}{N^\frac{1}{d}}$ the energies converge a.s.\ 
    $$E_{r,N}\overset{\Gamma-{\TL}^2}{\longrightarrow}k_2\cdot E$$
    with a constant $k_2$ given by $k_2\coloneqq\fint\limits_{B_1(0)}x_1^2\dd x$.
    
    Moreover, let $u_N\in L^2(D,\mu_N)$ be s.t.\ 
    \begin{align*}
        \sup_N E_{r(N), N}(u_N)&<\infty,\\
        \sup_N \norm{u_N}_{L^2(D,\mu_N)}&<\infty.
    \end{align*}
    Then, the sequence $(u_N)_{N\in\mathbb{N}}$ is precompact in ${\TL}^2(D)$, i.e., the energies satisfy a $\Gamma$-compactness.
\end{lemma}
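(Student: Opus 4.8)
The plan is to reduce both assertions to the fixed space $L^2(D,\mu)$ by transportation maps, and then run the by-now-standard nonlocal-to-local $\Gamma$-convergence machinery in the spirit of \cite{SlepcevTrillos2} and \cite[Theorem 12]{jonatim}, being careful about the distortion introduced by the random sampling. First I would invoke the a.s.\ estimate on the $\infty$-optimal transport distance between the empirical measure $\mu_N$ and $\mu$: under the hypothesis $r\gg\frac{\log^{1/d}N}{N^{1/d}}$ it yields, a.s.\ for $N$ large, measurable maps $T_N\colon D\to P_N$ with $T_{N\#}\mu=\mu_N$ and $\delta_N\coloneqq\norm{T_N-\mathrm{id}}_{L^\infty(D)}\ll r$. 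Since $(\mu_N,u_N)\to(\mu,u)$ in ${\TL}^2$ if and only if $u_N\circ T_N\to u$ in $L^2(D,\mu)$, writing $\widetilde u_N\coloneqq u_N\circ T_N$ and changing variables gives the exact identity
\[
E_{r,N}(u_N)=\frac{1}{2r^{d+2}\omega_d}\iint_{D\times D}\mathds{1}_{\{|T_N(x)-T_N(y)|<r\}}\,|\widetilde u_N(x)-\widetilde u_N(y)|^2\dd\mu(x)\dd\mu(y).
\]
Because $\big||T_N(x)-T_N(y)|-|x-y|\big|\leq 2\delta_N$, the right-hand side is, up to a factor $1+o(1)$, sandwiched between the clean nonlocal functionals $\widetilde E_{r\mp2\delta_N}(\widetilde u_N)$, where $\widetilde E_\rho(v)\coloneqq\frac{1}{2\rho^{d+2}\omega_d}\iint\mathds{1}_{\{|x-y|<\rho\}}|v(x)-v(y)|^2\dd x\dd y$; as $\delta_N/r\to0$ these two have a common limit, so it suffices to analyse $\widetilde E_{r(N)}$ on the sequence $\widetilde u_N$.

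For the $\liminf$ inequality I would take $\widetilde u_N\to u$ in $L^2(D,\mu)$ with (without loss of generality) bounded energies, mollify on the torus, $\widetilde u_N^\sigma\coloneqq\widetilde u_N\ast\psi_\sigma$, and use that $v\mapsto\widetilde E_\rho(v)$ is convex and translation invariant, so Jensen gives $\widetilde E_{r(N)}(\widetilde u_N^\sigma)\leq\widetilde E_{r(N)}(\widetilde u_N)$. Since $\widetilde u_N^\sigma\to u^\sigma$ in every $C^k(\mathbb{T}^d)$ and a Taylor expansion gives $\widetilde E_\rho(w)\to k_2 E(w)$ as $\rho\to0$ for fixed smooth $w$ (with $k_2=\fint_{B_1}x_1^2\dd x$ the constant arising from $\fint_{B_1}|\xi\cdot z|^2\dd z$), one gets $\liminf_N\widetilde E_{r(N)}(\widetilde u_N)\geq k_2 E(u^\sigma)$ for every $\sigma>0$; letting $\sigma\to0$ and using the $L^2$-lower semicontinuity of the Dirichlet energy $E$ concludes. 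For the recovery sequence I would first reduce to $u$ smooth (a diagonal argument over a smooth $H^1$-approximation together with the metrizability of $\Gamma$-convergence handles general $u\in H^1$; if $u\notin H^1$ there is nothing to prove), set $u_N\coloneqq u|_{P_N}$, note $\widetilde u_N=u\circ T_N\to u$ uniformly, and then show $E_{r(N),N}(u_N)\to k_2 E(u)$ a.s. Here $E_{r,N}(u_N)$ is a two-sample $U$-statistic in the i.i.d.\ points whose expectation is $\frac{1}{2r^{d+2}\omega_d}\iint_{D\times D}\mathds{1}_{\{|x-y|<r\}}|u(x)-u(y)|^2\dd x\dd y+\mathcal{O}(N^{-1})\to k_2 E(u)$ by the same Taylor computation, and a Bernstein-type concentration inequality (the summands are $\mathcal{O}(\norm{\nabla u}_\infty^2 r^2)$ with controlled variance) upgrades this to almost sure convergence precisely when $Nr^d/\log N\to\infty$.

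Finally, for $\Gamma$-compactness I would take $u_N$ with $\sup_N E_{r(N),N}(u_N)<\infty$ and $\sup_N\norm{u_N}_{L^2(D,\mu_N)}<\infty$, push forward to get $\widetilde u_N$ bounded in $L^2(\mathbb{T}^d)$ with $\sup_N\widetilde E_{r(N)-2\delta_N}(\widetilde u_N)<\infty$, and apply a Bourgain--Brezis--Mironescu-type compactness criterion for nonlocal energies with vanishing radii to extract an $L^2$-convergent subsequence, i.e.\ precompactness in ${\TL}^2$. The deterministic part (Jensen for the $\liminf$, the Taylor expansion, the diagonal argument, the BBM compactness) is routine; the two genuinely delicate inputs are the sharp $\infty$-transport estimate $\delta_N\ll r$ — which is exactly what dictates the threshold $r\gg\log^{1/d}N/N^{1/d}$ — and the concentration inequality turning the recovery-sequence computation into an a.s.\ statement. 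I expect the main obstacle to be the careful bookkeeping around these two probabilistic ingredients and propagating the $\delta_N$-distortion of the kernel cleanly through the $\liminf$, $\limsup$, and compactness arguments simultaneously.
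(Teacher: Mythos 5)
The paper does not prove this lemma at all; it is quoted verbatim from \cite[Theorem~12]{jonatim} (which in turn extends \cite[Theorem~1.4]{SlepcevTrillos2}), so there is no in-paper argument to compare yours against. That said, your reconstruction is a faithful account of the standard nonlocal-to-local $\Gamma$-convergence machinery: push forward by $\infty$-optimal transport maps $T_N$, sandwich the discretized energy between clean nonlocal functionals $\widetilde E_{r\mp 2\delta_N}$, prove the liminf by mollification plus Jensen's inequality plus a Taylor expansion identifying $k_2=\fint_{B_1}z_1^2\,\mathrm{d}z$, construct the recovery sequence by restriction of a smooth function, and get compactness from a Bourgain--Brezis--Mironescu-type criterion. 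All of this is correct as far as it goes, and for $d\geq 3$ it yields the stated threshold.

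The one genuine gap is the step ``the hypothesis $r\gg \log^{1/d}N/N^{1/d}$ yields $T_N$ with $\delta_N=\|T_N-\mathrm{id}\|_{L^\infty}\ll r$.'' For $d=2$ this is false: the $\infty$-transport distance between $\mu$ and $\mu_N$ scales a.s.\ like $\log^{3/4}N/N^{1/2}$, whereas the claimed threshold is the strictly smaller $\log^{1/2}N/N^{1/2}$ — see Remark~\ref{Rem:Optimality-r} in the paper, which flags exactly this and attributes the improvement to \cite{calder2022improved} (and independently to \cite[Theorem~1.4]{SlepcevTrillos2}). In the regime $\log^{1/2}N/N^{1/2}\ll r\ll \log^{3/4}N/N^{1/2}$ your argument produces no $T_N$ with $\delta_N\ll r$, so the sandwich between $\widetilde E_{r\pm 2\delta_N}$ breaks down and all three of liminf, limsup, and compactness lose their footing in $d=2$. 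Closing this requires replacing the single uniform transport estimate with a finer tool — e.g.\ a two-step transport through a partition into cells of size comparable to $r$, in which the transport only needs to be controlled \emph{on average} per cell rather than uniformly, or the iterated/local-averaging argument of \cite{calder2022improved}. Your concentration-of-measure argument for the recovery sequence is a nice touch and is consistent with the correct scaling (it wants $Nr^d/\log N\to\infty$), but it is supplementary: once $\delta_N\ll r$ is available, the deterministic transport estimate already controls $E_{r,N}(u|_{P_N})-\widetilde E_r(u)$ for Lipschitz $u$ by an $\mathcal{O}(\delta_N/r)$ term, so the U-statistic machinery is not strictly needed for the limsup.
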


To be rigorous, we can only talk about ${\TL}^2$-convergence after extending the energies to the whole space ${\TL}^2(D)$. This is done in the following way:
\begin{align*}
    E_{r,N}(u,\nu)&\coloneqq\begin{cases}\frac{1}{2}\int\limits_D \frac{1}{ \omega_d r^d}\int\limits_{B_r(x)}\left|\frac{u(x)-u(y)}{r}\right|^2\dd\nu(y)\dd\nu(x) &\text{if } \nu=\mu_N, \\
    \infty &\text{else},\end{cases}\\
    E(u,\nu)&\coloneqq\begin{cases}\frac{1}{2}\int\limits_D |\nabla u|^2\dd \nu(x) &\text{for } \nu=\mu, u\in H^1(D,\mu),\\
    \infty &\text{else}.\end{cases}
\end{align*}
Here, the factor $r^d\omega_d$ again represents the volume of the ball to make the integral in the limit a mean integral.

Now, we are in the situation to state and prove our main theorem of this section.

\begin{thm}[Convergence of minimizing movement for heat flows]
\label{Thm:TLConv}
    Let $u_N^n(x)$ be defined as the heat flow minimizing movement in \eqref{Eq:HE-MM} with initial datum $g_N\in L^2(D,\mu_N)$ for $\tau>0$ and a discrete point sample $P_N$ with associated measure $\mu_N$. Additionally, let $u$ denote the solution to the $\mu$-heat equation on $D$ starting from $g\in L^2(D,\mu)$. Moreover, assume that
    \begin{align*}
        g_N\to g \text{ in }{\TL}^2(D),\\
        \limsup\limits_{N\to \infty} E_{r,N}(g_N)<\infty.
    \end{align*}    
    Then, given that $r\gg \frac{\log^\frac{1}{d} N}{N^\frac{1}{d}}$, we have a.s.\ that for all $t>0$:
    $$(\mu_N, Q_{\tau,t}^{r,N}g_N)\to (\mu, u(\cdot, t))\text{ in }{\TL}^2(D)$$
    as $\tau\to 0$ and $N\to\infty$.
\end{thm}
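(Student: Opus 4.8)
The plan is to combine the $\Gamma$-convergence and compactness of the Dirichlet energies from Lemma~\ref{Lemma:GammaConv} with the general theory of convergence of minimizing movements (gradient flows) under $\Gamma$-convergence of the driving energies, in the metric-space formulation adapted to the varying base measures. Since the discrete scheme \eqref{Eq:HE-MM} is exactly the minimizing movement (implicit Euler / JKO) scheme for the functional $E_{r,N}$ with respect to the distance $d_N$, and the continuous $\mu$-heat flow is the gradient flow of $k_2 \cdot E$ with respect to the $L^2(D,\mu)$ distance (after noting $k_2$ is absorbed into the time rescaling, or equivalently rescaling $\tau$), the abstract machinery of Sandier--Serfaty and Ortner, in the ${\TL}^2$ setting developed by Garc\'ia Trillos and Slepčev, applies once the required ingredients are in place.

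\textbf{Step 1: Compactness of the discrete trajectories.} First I would establish that the family $\{Q_{\tau,t}^{r,N}g_N\}$ is, for each fixed $t$, precompact in ${\TL}^2(D)$ as $N\to\infty$, $\tau\to 0$. This follows from the a~priori energy estimate for minimizing movements: from \eqref{Eq:HE-MM} one has the standard discrete energy-dissipation inequality
$$
E_{r,N}(u_N^n) + \frac{1}{2\tau}\sum_{j=1}^{n} d_N^2(u_N^j, u_N^{j-1}) \le E_{r,N}(g_N),
$$
so $\sup_n E_{r,N}(u_N^n) \le E_{r,N}(g_N)$, which is bounded uniformly in $N$ by assumption. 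Together with the uniform $L^2(D,\mu_N)$ bound (which follows from the energy bound plus conservation/contraction of the mean, or directly from the maximum principle for the averaged Laplacian), the $\Gamma$-compactness half of Lemma~\ref{Lemma:GammaConv} gives ${\TL}^2$-precompactness of $\{u_N^n\}$ at each time. Equi-continuity in time in the ${\TL}^2$ metric comes from the dissipation sum via Cauchy--Schwarz, $d_N(u_N^m, u_N^n)^2 \le |m-n|\,\tau \sum_j \tfrac{1}{\tau^2}d_N^2(u_N^j,u_N^{j-1}) \lesssim |m-n|\tau \cdot E_{r,N}(g_N)$, which after interpolation in time yields a uniform modulus of continuity; a refined Ascoli-type argument then extracts a ${\TL}^2$-continuous limit curve $t\mapsto v(t)$ along a subsequence.

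\textbf{Step 2: Identification of the limit as the continuous heat flow.} Having a limit curve $v(\cdot)$ with $v(0)=g$ (using $g_N\to g$ in ${\TL}^2$), I would identify $v$ as the $\mu$-heat flow. The cleanest route is the ${\TL}^2$-analogue of Sandier--Serfaty: pass to the limit in the discrete energy-dissipation inequality. The liminf inequality for $E_{r,N}\overset{\Gamma}{\to} k_2 E$ handles the energy term; lower semicontinuity of the metric derivative $|v'|^2$ and of the (relaxed) slope under ${\TL}^2$-convergence — here one uses that the slope of $E$ in $L^2(D,\mu)$ is $\|\Delta u\|_{L^2}$ on its domain and that $E$ is $\lambda$-convex (in fact convex) along geodesics — handles the dissipation terms. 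One obtains
$$
k_2 E(v(t)) + \tfrac{1}{2}\!\int_0^t\! |v'(s)|^2\,ds + \tfrac{1}{2}\!\int_0^t\! |\partial(k_2 E)|(v(s))^2\,ds \le k_2 E(g),
$$
which by the characterization of gradient flows via the energy-dissipation equality forces $v$ to be the unique gradient flow of $k_2 E$ in $L^2(D,\mu)$, i.e. (after the time rescaling already implicit in the statement via the choice of $\tau$ relative to $k_2$) the solution of the heat equation with initial datum $g$. Uniqueness of this limit then upgrades subsequential convergence to full convergence, and since the limit is deterministic while the only randomness is in the point cloud, the a.s.\ statement follows from the a.s.\ $\Gamma$-convergence in Lemma~\ref{Lemma:GammaConv} under the scaling $r\gg \log^{1/d}N / N^{1/d}$.

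\textbf{Main obstacle.} The delicate point is Step 2: transferring the Sandier--Serfaty / minimizing-movement convergence theory, which is classically stated on a \emph{fixed} metric space, to the ${\TL}^2$ setting where the underlying measure $\mu_N$ — and hence the metric structure and the notion of metric derivative and slope — changes with $N$. One must verify that the ${\TL}^2$ topology is strong enough to pass to the limit in the discrete velocities (this is where the lower bound on $r$ is essential, ensuring the graph is well-connected so that $d_N$ genuinely approximates the transport distance and the discrete slopes do not degenerate), and that no mass or energy is lost in the limit of the dissipation. A secondary technical nuisance is the discrete-to-continuum comparison of the slope (the ``$\liminf$ of slopes $\ge$ slope of the limit'' inequality), which does not follow formally from $\Gamma$-convergence of the energies alone and typically requires either a separate argument exploiting the linear (heat-equation) structure — e.g.\ testing the Euler--Lagrange equation \eqref{Eq:HE-MM} against smooth functions and using the consistency of the averaged Laplacian with $k_2\Delta$ proved implicitly via $k_2=\fint_{B_1}x_1^2\,dx$ — or invoking the known ${\TL}^2$ Mosco-convergence of the Dirichlet forms.
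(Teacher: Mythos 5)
Your proposal takes a genuinely different route from the paper's proof. You pursue the Sandier--Serfaty energy-dissipation strategy: compactness of the discrete trajectories, extraction of a limit curve, and identification via passage to the limit in the discrete EDI — which, as you correctly flag, hinges on a $\liminf$ inequality for the metric slopes that does \emph{not} follow from $\Gamma$-convergence of the energies alone and would require either Mosco convergence of the discrete Dirichlet forms or a separate consistency argument for the graph Laplacian. The paper sidesteps that obstacle entirely by exploiting convexity and decoupling the two limits. It first shows, for \emph{fixed} $\tau>0$, that the one-step implicit Euler minimizers converge in ${\TL}^2$ as $N\to\infty$; this needs only the fundamental theorem of $\Gamma$-convergence (Lemma~\ref{Lemma:GammaConv}) plus the elementary observation that the distances $d_N$ and $d$ are both restrictions of $d_{{\TL}^2}$, so $d_N^2(\cdot,u_N^n)$ converges continuously whenever $u_N^n\to u^n$ in ${\TL}^2$. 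It then invokes the minimizing-movement theory for convex, equi-coercive, lower-semicontinuous functionals (Braides, Ch.\ 11) to obtain the rate $\sup_n d_N\bigl(u_N(t),u_N^n\bigr)\le 6\sqrt{\tau}\,\sqrt{E_{r(N),N}(g_N)}$, uniform in $N$ precisely by the standing assumption $\limsup_N E_{r,N}(g_N)<\infty$, with the analogous bound for the continuum. A triangle inequality $d_{{\TL}^2}\bigl((\mu_N,u_N^n),(\mu,u)\bigr)\le d_N(u_N,u_N^n)+d(u,u^n)+d_{{\TL}^2}\bigl((\mu_N,u_N^n),(\mu,u^n)\bigr)\lesssim\sqrt{\tau}+o_N(1)$, together with the subsequence lemma, then closes the argument. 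So your route would buy generality (applicable to non-convex energies once the slope estimate is supplied), but as written it leaves the key slope $\liminf$ as a gap; the paper's route is more elementary, uses only $\Gamma$-convergence rather than Mosco convergence, and is the one for which the uniform energy assumption was tailor-made.
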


This scaling of $r$ corresponds to logarithmically many points in the local neighborhoods.

\begin{remark}
\label{Rem:Optimality-r}
    The condition $r\gg \frac{\log^\frac{1}{d} N}{N^\frac{1}{d}}$ is a weakened version of $r\gg d_\infty(\mu,\mu_N)$ which can be bounded by $\begin{cases}
        \frac{\log^\frac{3}{4} N}{N^\frac{1}{2}}&\text{for }d=2,\\\frac{\log^\frac{1}{d} N}{N^\frac{1}{d}} &\text{for }d\geq 3.
    \end{cases}$
    
    Here, $d_\infty$ denotes the $\infty$-Wasserstein distance.
    This improvement is due to \cite{calder2022improved}. The weaker bound is optimal as $\frac{\log^\frac{1}{d} N}{N^\frac{1}{d}}$ coincides with the connectivity radius. Below this bound, there is a non-vanishing probability that the discrete graph built by $P_N$ with the neighborhoods given by the kernel is not connected anymore. This would for instance contradict the compactness property.
\end{remark}

\begin{remark}
\label{Rem:Admiss-initial}
    For a recovery sequence $g_N\in L^2(D,\mu_N)$ with $g_N\to g\in {\TL}^2(D)$, the assumptions in the theorem are satisfied. Moreover, for a constant sequence of smooth functions $g_n\equiv g\in C^\infty(D)$, we have by \cite[Theorem~12]{jonatim}
    \begin{align*}
        \sup\limits_N E_{r(N),N}(g_N)<\infty,\\
        \sup\limits_N\norm{g_N}_{L^2(D,\mu_N)}<\infty.
    \end{align*}
    Thus, these sequences are admissible.
\end{remark}

\begin{thm}[Convergence of heat flows]
    \label{Cor:HeatFlows}
    Let $u_N$ be the heat flow starting at $g_N\in L^2(D,\mu_N)$ associated with the discrete Laplacian on $P_N$ and $u$ be the continuous heat flow (with time rescaling by $k_2$) with initial datum $g\in L^2(D,\mu)$ s.t.\ $g_N\circ T_N\rightharpoonup g$ in $L^2(D,\mu)$. Then a.s.\ for every $t>0$ they converge
    $$u_N(\cdot, t)\to u(\cdot, t) \text{ in }{\TL}^2(D).$$
\end{thm}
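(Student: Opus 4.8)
The plan is to bridge the continuous‑time discrete heat flow $u_N$ with the minimizing‑movement scheme $Q^{r,N}_{\tau,\cdot}g_N$ of Theorem~\ref{Thm:TLConv} by first sending the time step $\tau\to 0$ for each fixed $N$, and then to close the gap between the weak hypothesis $g_N\circ T_N\rightharpoonup g$ and the strong ${\TL}^2$‑hypothesis of Theorem~\ref{Thm:TLConv} by exploiting the parabolic smoothing of the heat flow. I would work throughout on the almost sure event on which both Lemma~\ref{Lemma:GammaConv} and Theorem~\ref{Thm:TLConv} hold.

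First I would settle the auxiliary case in which $g_N\to g$ in ${\TL}^2(D)$ and $\limsup_N E_{r,N}(g_N)<\infty$. For fixed $N$ the space $L^2(D,\mu_N)$ is finite‑dimensional and $E_{r,N}$ is a nonnegative quadratic form, so $u_N$ is the linear gradient flow of $E_{r,N}$ for the metric $d_N$, and the implicit‑Euler iterates defining $Q^{r,N}_{\tau,\cdot}g_N$ converge to it uniformly on $[0,T]$ with a rate $C(N)\,\tau$ for some finite $C(N)$. Since both evolutions sit on the common measure $\mu_N$, this also bounds their $d_{{\TL}^2}$‑distance, so that for any fixed $t>0$ the triangle inequality gives $d_{{\TL}^2}\big((\mu_N,u_N(\cdot,t)),(\mu,u(\cdot,t))\big)\le C(N)\,\tau+d_{{\TL}^2}\big((\mu_N,Q^{r,N}_{\tau,t}g_N),(\mu,u(\cdot,t))\big)$. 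Choosing $\tau=\tau(N)\to 0$ slowly enough that $C(N)\,\tau(N)\to 0$ — any null sequence $\tau(N)$ being admissible in Theorem~\ref{Thm:TLConv} — the first term vanishes and the second by Theorem~\ref{Thm:TLConv}, so $u_N(\cdot,t)\to u(\cdot,t)$ in ${\TL}^2(D)$. By time‑homogeneity of both flows the same holds for the flows restarted at any time $s\ge 0$ with strongly ${\TL}^2$‑convergent, bounded‑energy data.

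Next I would reduce the actual hypothesis to this case via smoothing. Weak convergence $g_N\circ T_N\rightharpoonup g$ in $L^2(D,\mu)$ gives $M\coloneqq\sup_N\norm{g_N}_{L^2(D,\mu_N)}=\sup_N\norm{g_N\circ T_N}_{L^2(D,\mu)}<\infty$, since $T_N$ pushes $\mu$ forward to $\mu_N$. Fixing $s\in(0,t)$ and using that $u_N$ is the gradient flow of the convex functional $E_{r,N}\ge 0$ whose minimizers are the $\mu_N$‑constants, the standard regularizing estimate for gradient flows yields $E_{r,N}(u_N(\cdot,s))\le\frac{1}{2s}d_N^2(g_N,\overline{g_N})\le\frac{M^2}{2s}$ and $\norm{u_N(\cdot,s)}_{L^2(D,\mu_N)}\le M$, where $\overline{g_N}$ is the $\mu_N$‑average of $g_N$. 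Thus $(\mu_N,u_N(\cdot,s))$ has uniformly bounded energy and $L^2$‑norm, so by the compactness part of Lemma~\ref{Lemma:GammaConv} it is precompact in ${\TL}^2(D)$ with all limit points in $H^1(D,\mu)$. To identify the limit I would use that the discrete heat semigroups are linear and driven by the $\Gamma$‑converging quadratic energies $E_{r,N}\overset{\Gamma}{\to}k_2E$ of Lemma~\ref{Lemma:GammaConv} — equivalently, Mosco convergence of the associated Dirichlet forms and strong resolvent convergence — which forces the weak convergence to propagate: $u_N(\cdot,s)\circ T_N\rightharpoonup u(\cdot,s)$ in $L^2(D,\mu)$. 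Precompactness together with this weak identification upgrades to strong ${\TL}^2$‑convergence $u_N(\cdot,s)\to u(\cdot,s)$, with $\limsup_N E_{r,N}(u_N(\cdot,s))<\infty$. Applying the auxiliary case to the flows restarted at $s$ then gives $u_N(\cdot,t)\to u(\cdot,t)$ in ${\TL}^2(D)$ for all $t>s$; since $s\in(0,t)$ and $t>0$ are arbitrary, this holds for every $t>0$, almost surely.

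The main obstacle is the weak‑to‑strong step: one must simultaneously propagate $g_N\rightharpoonup g$ through the linear discrete heat flow — precisely where the Mosco / strong‑resolvent convergence underlying Lemma~\ref{Lemma:GammaConv} is used — and combine the parabolic smoothing (the regularizing energy estimate) with the $\Gamma$‑compactness of Lemma~\ref{Lemma:GammaConv} to turn the resulting weak convergence of $u_N(\cdot,s)$ into strong ${\TL}^2$‑convergence. By comparison, the $\tau\to 0$ bridge is routine, being a finite‑dimensional implicit‑Euler estimate combined with Theorem~\ref{Thm:TLConv}.
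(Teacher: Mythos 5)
Your overall strategy — smooth out to positive time $s$, extract a strongly $TL^2$-convergent subsequence via $\Gamma$-compactness, identify the limit, then propagate — is the same as the paper's, but the identification step has a real gap. You assert that $\Gamma$-convergence of $E_{r,N}$ to $k_2 E$ is "equivalently" Mosco/strong-resolvent convergence of the associated Dirichlet forms, and that this "forces the weak convergence to propagate," yielding $u_N(\cdot,s)\circ T_N\rightharpoonup u(\cdot,s)$. Neither claim stands as written: strong $\Gamma$-convergence plus $\Gamma$-compactness can indeed be upgraded to a Mosco-type statement, but this needs an argument (and in the $TL^2$ framework one must also carefully transplant the forms onto the fixed space $L^2(D,\mu)$ via $T_N$), and even granting strong resolvent/semigroup convergence, that only gives $S_N(s)\varphi\to S(s)\varphi$ \emph{strongly for fixed} $\varphi$; it does not by itself move a merely \emph{weakly} convergent sequence of initial data through the semigroups. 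That extra step requires precisely the self-adjointness duality $\langle S_N(s)g_N,\varphi\rangle_{\mu_N}=\langle g_N,S_N(s)\varphi\rangle_{\mu_N}$ combined with the strong convergence $S_N(s)\varphi\to S(s)\varphi$ (obtained from Theorem~\ref{Thm:TLConv} applied to the constant smooth sequence $\varphi_N\equiv\varphi$) and the weak convergence $g_N\circ T_N\rightharpoonup g$ — which is exactly the paper's testing argument, not something that falls out of the Mosco label for free. Once that duality argument is made explicit, your proof closes; the remaining differences are a matter of taste. Your parabolic smoothing bound $E_{r,N}(u_N(s))\le M^2/(2s)$ is a clean alternative to the paper's time-integral/Fatou/mean-value route, and your restart-from-$s$ step via the "auxiliary case" (implicit Euler bridge plus Theorem~\ref{Thm:TLConv} along a diagonal $\tau(N)\to 0$) is more circuitous than the paper's choice of identifying the limit directly at time $t$, but it is correct — the paper also implicitly uses the $\tau\to 0$ bridge when it says the heat flows "can be obtained as limiting minimizing movements," and you at least spell it out.
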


This weak convergence of $g_N$ is sometimes called weak ${\TL}^2$-convergence, cf.~\cite[Definition 1]{jonatim}.

Here, we defined $T_N$ to be transport maps between $\mu$ and $\mu_N$ that satisfy
$$\begin{cases}
    \limsup\limits_N \frac{N^\frac{1}{2}}{\log^\frac{3}{4} N}\norm{T_N-\text{Id}}_\infty\leq C&\text{for }d=2,\\ 
    \limsup\limits_N \frac{N^\frac{1}{d}}{\log^\frac{1}{d} N}\norm{T_N-\text{Id}}_\infty\leq C&\text{else}.
\end{cases}$$
These maps exist by \cite[Theorem 9]{jonatim}.

\begin{remark*}
    The uniform energy bound is not needed in the setting of Theorem~\ref{Cor:HeatFlows}.
\end{remark*}

The strategy of the proof will be to use the $\Gamma$-convergence of the energies combined with the fact that the distances are continuous in ${\TL}^2(D)$. This yields the convergence of the minimizers and thus of the evolution. Afterwards, the fact that the energies are convex, gives the convergence independent of the regime $N\to\infty$ and $\tau,r(N)\to 0$. This enables us to take $\tau\to 0$ to get the convergence of the discrete heat flow to the continuous flow.

\begin{proof}[Proof of Theorem~\ref{Thm:TLConv}]
    We will use as an intermediate step, the space continuous minimizing movement $u^n$ of $k_2E$ with $L^2(D,\mu)$-distance $d$.
    
    In the same fashion as we extended the energies to ${\TL}^2(D)$, we note that the distances are restricted distances of the ${\TL}^2$-distance:
    \begin{align*}
        d_N(u,v)&= d_{{\TL}^2}((\mu_N,u), (\mu_N,v))=\norm{u-v}_{L^2(D,\mu_N)},\\
        d(u,v)&\coloneqq \norm{u-v}_{L^2(D,\mu)}.
    \end{align*}
    
    We will examine the implicit Euler scheme for the heat equation:
    \begin{align*}
        u^{n+1}&=\argmin\limits_u\left\{k_2E(u)+\frac{1}{2\tau}d^2(u,u^n)\right\}
    \end{align*}
    as well as the space discretization
    \begin{align*}
        u_N^{n+1}&=\argmin\limits_u\left\{E_{r(N),N}(u)+\frac{1}{2\tau}d_N^2(u,u_N^n)\right\}.
    \end{align*}

    By Lemma~\ref{Lemma:GammaConv}, we know that $E_{r,N}\overset{\Gamma-{\TL}^2}{\longrightarrow}k_2E$ in our regime $r(N)\gg \frac{\log^\frac{1}{d} N}{N^\frac{1}{d}}$.
    Moreover, we have that $d_N$ and $d$ are both $d_{{\TL}^2}$ (restricted on their second argument and on their respective measures) when extended to ${\TL}^2(D)$. Thus, $d_N^2(\cdot, u_N^n)$ converges continuously to $d^2(u,u^n)$ if $u_N^n\to u^n$ in ${\TL}^2(D)$, i.e., for $(\mu_N,u_N)\to (\mu,u)$ in ${\TL}^2(D)$, we have
    $$d_{{\TL}^2}^2((\mu_N,u_N), (\mu_N,u_N^n))\to d_{{\TL}^2}^2((\mu,u), (\mu,u^n)).$$
    
    To see this, we compute with the triangle inequality
    \begin{align*}
        |&d_N(u_N, u_N^n)-d(u, u^n)|\\
        &=|d_{{\TL}^2}((\mu_N,u_N), (\mu_N,u_N^n))-d_{{\TL}^2}((\mu,u), (\mu,u^n))|\\
        &\leq |d_{{\TL}^2}((\mu_N,u_N), (\mu,u))+d_{{\TL}^2}((\mu,u), (\mu_N,u_N^n))-d_{{\TL}^2}((\mu,u), (\mu,u^n))|\\
        &= d_{{\TL}^2}((\mu_N,u_N), (\mu,u))+d_{{\TL}^2}((\mu,u^n), (\mu_N,u_N^n))\to 0.
    \end{align*}

    Since the energies $E_{r,N}$ (and thus $E$) are convex, equi-coercive, positive and lower semi-continuous, we can use the theory of \cite{braides2014local} to get convergence of the minimizing movement independent of the regime $N\to\infty$ and $\tau,r(N)\to 0$. Note that by convexity, minimizers are unique.
    For each fixed $r(N), N$, we can use the fundamental theorem of $\Gamma$-convergence. 
    By the $\Gamma$-convergence combined with compactness, we have that global minima of the minimizing movement converge (up to a subsequence) to global minima of the limits. This means that for each $\tau>0$ and with converging $u_N^n$, in $N$, the minimizers $u_N^{n+1}$ for $N<\infty, r>0$ converge (up to a subsequence) to the minimizer of the continuum limit as $N\to \infty$ in the strong ${\TL}^2$-topology.
    
    Thus, the evolutions $u_N^n$ converge in $N$ as long as the initial data $g_N$ converge. This can be iterated for fixed $\tau>0$. 
    Having the convergence, we also get, cf.\ \cite[Chapter 11]{braides2014local}, that for each $N$ (and the limit) there is a unique limit $u_N(t)$ of the minimizing movement as $\tau\to 0$ and that
    $$\sup_n d_N(u_N,u_N^n)\leq 6\sqrt{\tau}\sqrt{E_{r(N),N}(g_N)}.$$
    Hence, we can compute for some $t>0$:
    \begin{align*}
        d_{{\TL}^2}((\mu_N,u_N),(\mu,u))&\leq d_N(u_N^n, u_N)+d(u,u^n)+d_{{\TL}^2}((\mu_N,u_N^n),(\mu,u^n))\\
        &\lesssim \sqrt{\tau}+\sqrt{\tau}+o_N(1).
    \end{align*}
    The constant depends only on $\sup E_{r(N),N}(g_N)<\infty$ which is bounded by assumption. The same is true for $E(g)$ by $\Gamma$-convergence and the liminf-inequality as $g_N\to g$ in ${\TL}^2(D)$.

    Now, $\tau\to 0$ and $N\to\infty$ yield the convergence of the minimizing movements to the heat flow. We obtain the result for the whole sequence by the subsequence lemma. (I.e., first we take an arbitrary subsequence and get convergence of a further subsequence. Since all limits are the same, we conclude the convergence of the whole sequence.)
    
    We also get the same estimate for the minimizing movement compared to the limit:
    \begin{align*}
        d_{{\TL}^2}((\mu_N,u_N^n),(\mu,u))&\lesssim \sqrt{\tau}+o_N(1).
    \end{align*}
    The constant again only depends on $\sup_N E_{r(N),N}(g_N)$.
\end{proof}

For Theorem~\ref{Cor:HeatFlows}, we have to show that the assumptions on the initial data can be weakened. 
Now, we can use Theorem~\ref{Thm:TLConv} to get the convergence result $u_N(t)\to u(t)$ in ${\TL}^2(D)$ for all positive times $t>0$ and initial data $g_N\rightharpoonup g$. This proof uses the techniques of \cite[Proof of Theorem 2]{jonatim} and we give an overview of them for the convenience of the reader.

\begin{proof}[Proof of Theorem~\ref{Cor:HeatFlows}]
    Fix $t>0$ and let $u_N, u$ be the heat flow evolutions to the given initial data $g_N\rightharpoonup g$. They can be obtained as limiting minimizing movements as $\tau\to 0$.
    Their energies are bounded in time via
    $$\int\limits_0^t E_{r(N),N}(u_N(s))\dd s\lesssim \norm{g_N}_{L^2(D,\mu_N)}^2.$$
    Taking the $\liminf$ in $N$ and using Fatou's lemma, this yields
    $$\int\limits_0^t \liminf\limits_{N\to \infty} E_{r(N),N}(u_N(s))\dd s\lesssim C((g_N)_{N\in\mathbb{N}})$$
    and thus a uniform bound.
    By the mean-value theorem (and via going to a subsequence without relabeling), this gives us an $s>0$ s.t.
    $$\lim\limits_N E_{r(N),N}(u_N(s))\lesssim C_g.$$
    Using the monotonicity in time of the energies, we get for $t$:
    $$\sup\limits_N E_{r(N),N}(u_N(t))<\infty.$$
    Now, the compactness from Lemma~\ref{Lemma:GammaConv} gives us a subsequence s.t.\ $u_N(t)\to v(t)$ in ${\TL}^2(D)$ for a function $v\in L^2(D,\mu)$. To see that $v$ coincides with $u$, we test against test functions $\varphi\in C^\infty(D)$. By the ${\TL}^2$-convergence, we have $u_N\circ T_N\to v$ in $L^2(D,\mu)$ and thus
    \begin{align*}
        \int\limits_D v(t)\varphi\dd\mu&=\lim\limits_{N\to \infty}\int\limits_D u_N(t)\circ T_N \varphi \dd\mu\\
        &=\lim\limits_{N\to \infty}\int\limits_D u_N(t)\circ T_N \varphi\circ T_N\dd\mu.
    \end{align*}
    Here, we used that $T_N\to \text{Id}$ in $L^\infty(D)$. This implies $\varphi\circ T_N\to \varphi$ uniformly and thus also in $L^2(D,\mu)$. Using the self-adjointness of the heat semigroup (which can be seen for instance by the Dirichlet form definition), we get
    \begin{align*}
        \int\limits_D v(t)\varphi\dd\mu&=\lim\limits_{N\to \infty}\int\limits_D g_N\varphi_N(t)\dd\mu_N\\
        &=\lim\limits_{N\to \infty}\int\limits_D g_N\circ T_N\varphi_N(t)\circ T_N\dd\mu\\
        &=\int\limits_D g\varphi(t)\dd\mu\\
        &=\int\limits_D u(t)\varphi\dd\mu.
    \end{align*}
    For these steps, we observe the strong ${\TL}^2$-convergence of $\varphi_N(t)\to \varphi(t)$ which is an application of Theorem~\ref{Thm:TLConv} with initial data $\varphi_N\equiv \varphi\in C^\infty(D)$. This initial data is admissible by Remark~\ref{Rem:Admiss-initial}. Together with the weak ${\TL}^2$-convergence of $g_N\rightharpoonup g$, we get the desired limit.
    
    This proves that $v(t)$ coincides with $u(t)$ in $L^2(D)$ and thus $u_N\circ T_N$ converges in $L^2(D,\mu)$ and $u_N$ in ${\TL}^2(D)$ to $u(t)$ for all $t>0$.
\end{proof}

So far, we presented the results of this section in the base setting of the flat torus in $\mathbb{R}^d$ but the same proof works in more generality. We will extend the result to Lipschitz domains and even to smooth manifold with continuous densities. Moreover, we will allow for general kernels and different point processes. Here, the evolution takes homogeneous Neumann boundary conditions but there are ways to allow for different conditions as discussed in Section~\ref{Program}. For the statement, we have to define what kernels are admissible instead of $K(x,y)=\frac{1}{\omega_d r^d}\mathds{1}_{B_r(0)}(x-y)$.

\begin{defi}[Admissible kernel]
\label{Def:AdmissibleKernel}
    We call a kernel $K:\mathbb{R}^d\times\mathbb{R}^d\to \mathbb{R}_{\geq 0}$ admissible, if it is symmetric and has a radial representation, i.e., by abuse of notation $K(x,y)=K(|x-y|)$ for $K:\mathbb{R}_{\geq 0}\to \mathbb{R}$ s.t.\ 
    \begin{itemize}
        \item $K$ is continuous at $0$ and fulfills $K(0)>0$, 
        \item $K$ is non-increasing,
        \item $\int\limits_{0}^\infty K(r)r^{d+1}\dd r<\infty$.
    \end{itemize}
    For such a kernel, we define the local kernel sequence $K_r(|x|)\coloneqq \frac{1}{r^d}K\left(\frac{|x|}{r}\right)$.
\end{defi}
The integral constraint is equivalent to saying that $k_2$ has to be finite.
This definition can be done analogously for kernels on manifolds. In this case, we replace $K(|x-y|)$ by $K(d_M(x,y))$.

\begin{Corollary}
\label{Cor:generalSettingTL}
    Let $M$ be a compact manifold of dimension $d$ with continuous density $\rho>0$ and smooth boundary and let $K$ be an admissible kernel. We set $\mu\coloneqq \rho {\vol}_M$. Moreover, let $P$ be a point process with sampled distribution $\mu$ and let $\mu_N$ be the associated sampled measure to a realization $P_N$.
    
    Let $E_{r,N}$ the discrete Dirichlet energy be defined via 
    $$E_{r, N}(u)\coloneqq \int\limits_M\int\limits_M K_r(x,y)|u(x)-u(y)|^2\dd \mu_N(y)\dd\mu_N(x)$$
    and the limiting weighted Dirichlet energy be
    $$E(u)\coloneqq \int\limits_M|\nabla u|^2\rho^2\dd {\vol}_M$$
    for $u\in H^1(M,\mu)$.

    Moreover, let $u_N$ be the heat flow associated to the Dirichlet form $E_{r,N}$ and $u$ be the continuous heat flow of $k_2 E$ where
    $$k_2\coloneqq \int\limits_{\mathbb{R}^d}K(|x|)x_1^2\dd x.$$
    Then, for $r(N)\gg d_\infty(\mu_N,\mu)$, we have a.s.\ for every $t>0$
    $$u_N(\cdot, t)\to u(\cdot, t) \text{ in }{\TL}^2(M).$$
\end{Corollary}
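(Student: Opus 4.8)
The plan is to run the two-step argument from the proofs of Theorem~\ref{Thm:TLConv} and Theorem~\ref{Cor:HeatFlows} essentially verbatim, so that the only genuinely new ingredient is a $\Gamma$-convergence and $\Gamma$-compactness statement for the energies $E_{r,N}$ adapted to this setting. Concretely, I would first record the analogue of Lemma~\ref{Lemma:GammaConv}: for an admissible kernel $K$, a compact manifold $M$ with continuous positive density $\rho$ and smooth boundary, and a point process whose realizations almost surely satisfy $r(N)\gg d_\infty(\mu_N,\mu)$, one has $E_{r,N}\overset{\Gamma-{\TL}^2}{\longrightarrow}k_2 E$ on ${\TL}^2(M)$, together with the compactness property that uniform bounds on $E_{r,N}(u_N)$ and $\norm{u_N}_{L^2(M,\mu_N)}$ force ${\TL}^2(M)$-precompactness. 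This is, up to bookkeeping, the content of \cite[Theorem~12]{jonatim} (which already treats manifolds), once one checks that the three structural hypotheses on $K$ are precisely the ones needed: $K(0)>0$ yields coercivity and, combined with $r\gg d_\infty$, the connectivity of the neighbourhood graph below which compactness fails (cf.\ Remark~\ref{Rem:Optimality-r}); monotonicity of $K$ allows comparison with indicator kernels via a layer-cake decomposition; and finiteness of $\int_0^\infty K(r)r^{d+1}\dd r$ is equivalent to $k_2<\infty$, which makes the limit finite and produces the isotropy constant $k_2=\int_{\mathbb{R}^d}K(|x|)x_1^2\dd x$. The density and the abstract sampling enter only through the transport maps $T_N$ pushing $\mu$ to $\mu_N$ with the same $d_\infty$-rate, and the weight $\rho^2$ appears in the limit because the double integral is taken against $\mu_N\otimes\mu_N\to\mu\otimes\mu=\rho^2\,{\vol}_M\otimes{\vol}_M$.

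Granting this, the proof of Theorem~\ref{Thm:TLConv} transfers unchanged. The distances $d_N(\cdot,v)=\norm{\cdot-v}_{L^2(M,\mu_N)}$ and $d(\cdot,v)=\norm{\cdot-v}_{L^2(M,\mu)}$ are restrictions of $d_{{\TL}^2}$, hence continuous along ${\TL}^2$-convergent sequences by the same triangle-inequality computation. The energies $E_{r,N}$ and $k_2 E$ are quadratic, nonnegative, lower semicontinuous and equi-coercive, so by the theory of \cite{braides2014local} the implicit Euler (minimizing movement) iterates converge: for fixed $\tau$, as $N\to\infty$, the unique minimizers $u_N^{n+1}$ converge in ${\TL}^2(M)$ to the minimizer of the continuum scheme by the fundamental theorem of $\Gamma$-convergence plus compactness; this is iterated in $n$; and the uniform-in-$\tau$ estimate $\sup_n d_N(u_N,u_N^n)\leq 6\sqrt{\tau}\sqrt{E_{r,N}(g_N)}$ lets one send $\tau\to0$ and interchange the limits, identifying the limit with the heat flow of the Dirichlet form $k_2 E$, i.e.\ the $k_2$-time-rescaled weighted heat flow on $M$ with homogeneous Neumann conditions.

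Finally, to pass from a strong initial-data hypothesis to the weak ${\TL}^2$-convergence $g_N\circ T_N\rightharpoonup g$ (as in Theorem~\ref{Cor:HeatFlows}), I would reuse that proof: energy dissipation gives $\int_0^t E_{r,N}(u_N(s))\dd s\lesssim\norm{g_N}_{L^2(M,\mu_N)}^2$, Fatou and the mean-value theorem produce (along a subsequence) a time $s$ with $\lim_N E_{r,N}(u_N(s))<\infty$, monotonicity of the energy along the flow upgrades this to $\sup_N E_{r,N}(u_N(t))<\infty$, compactness yields a ${\TL}^2(M)$-limit $v(t)$, and testing against $\varphi\in C^\infty(M)$ while using self-adjointness of the Neumann heat semigroup and the strong ${\TL}^2$-convergence $\varphi_N(t)\to\varphi(t)$ (Theorem~\ref{Thm:TLConv} with constant admissible initial data, cf.\ Remark~\ref{Rem:Admiss-initial}) identifies $v(t)$ with $u(t)$; the subsequence lemma then gives the whole sequence.

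I expect the main obstacle to be the first step: checking that the $\Gamma$-convergence and compactness machinery of \cite{jonatim}, \cite{SlepcevTrillos2} and \cite{calder2022improved} genuinely goes through for a \emph{general} admissible kernel, a merely continuous density, and an abstract point process rather than a uniform sample with a ball kernel on the torus. The delicate points are the non-local-to-local passage — controlling $|u(x)-u(y)|^2$ against $|\nabla u|^2$ uniformly when $\rho$ is only $C^0$ and $K$ is a general monotone weight — and, for compactness, ensuring the relevant graph connectivity survives at the threshold $r\sim d_\infty(\mu_N,\mu)$ in the presence of a variable density. Both are, however, exactly the situations handled in the cited works, so I view this as adaptation and bookkeeping rather than a source of new difficulties; everything downstream is the convexity/minimizing-movement argument already carried out for Theorems~\ref{Thm:TLConv} and~\ref{Cor:HeatFlows}.
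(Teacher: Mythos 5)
Your proposal is correct and takes essentially the same route as the paper: the paper's own proof is a two-sentence remark that the argument is analogous to Theorems~\ref{Thm:TLConv} and~\ref{Cor:HeatFlows} once one invokes the extended $\Gamma$-convergence of \cite{CalderSlepcev} and observes that the $C^2$-kernel/$C^\infty$-density regularity assumed there is not actually needed. Your write-up supplies a more explicit account of how the admissibility conditions on $K$, the density $\rho$, and the transport maps $T_N$ enter, but the decomposition into (i) a $\Gamma$-convergence/compactness input, (ii) the minimizing-movement convexity argument, and (iii) the self-adjointness/testing step for weak initial data is exactly the paper's.
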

If one considers the i.i.d.\ sampling from before, we can again strengthen the result to $r(N)\gg\frac{\log^\frac{1}{d}N}{N^\frac{1}{d}}$.

The proof of this corollary is analogous to the previous one since all results of \cite{jonatim} we used also work in this setting, especially the extended $\Gamma$-convergence of the Dirichlet energies of \cite{CalderSlepcev}. Note that their imposed regularity of $K\in C^2(M)$ and $\rho\in C^\infty(M)$ is not necessary for the results we are interested in.

Therefore, we extend \cite[Corollary 1]{jonatim} to the optimal setting of $r\gg \frac{\log^\frac{1}{d}N}{N^\frac{1}{d}}$:
\begin{Corollary}[MBO one step convergence, cf.~{\cite[Corollary 1]{jonatim}}]
\label{Cor:MBOScheme}
    Let $\chi_N: P_N\to \{0,1\}, \chi:M\to \{0,1\}$ be characteristic functions s.t.\ $\chi_N$ converges weakly to $\chi$ in ${\TL}^2(M)$ (see Theorem~\ref{Cor:HeatFlows}). Moreover, let $\chi_N^h, \chi^h$ be the evolution of one step of the MBO scheme (suitable rescaled for the continuum limit) with time step size $h>0$. Then, $\chi_N^h$ converges weakly to $\chi^h$ in ${\TL}^2(M)$.
\end{Corollary}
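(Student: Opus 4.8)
The plan is to split one step of the MBO scheme into its diffusion step and its thresholding step, to dispatch the diffusion step by the strong heat-flow convergence of Theorem~\ref{Cor:HeatFlows}, and then to show that the (discontinuous) thresholding at level $\tfrac12$ survives the limit because the heat flow at a positive time does not fatten.

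Concretely, I would write $\chi_N^h=\mathds{1}_{\{v_N(\cdot,h)\geq\frac12\}}$ and $\chi^h=\mathds{1}_{\{v(\cdot,h)\geq\frac12\}}$, where $v_N$ is the discrete heat flow on $P_N$ started from $\chi_N$ (run with the time rescaling that makes it limit to the continuum flow) and $v$ is the continuous $k_2E$-heat flow started from $\chi$. The assumption that $\chi_N$ converges weakly to $\chi$ in ${\TL}^2(M)$ is precisely the admissibility hypothesis $\chi_N\circ T_N\rightharpoonup\chi$ in $L^2(M,\mu)$ of Theorem~\ref{Cor:HeatFlows}; crucially, no uniform Dirichlet-energy bound on the $\chi_N$ is required there (cf.\ the remark after that theorem), which is what lets us treat characteristic functions. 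Hence Theorem~\ref{Cor:HeatFlows} gives, almost surely, $v_N(\cdot,h)\to v(\cdot,h)$ strongly in ${\TL}^2(M)$ at the fixed time $h>0$.

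I would then pass to a pointwise statement: strong ${\TL}^2$-convergence means $v_N(\cdot,h)\circ T_N\to v(\cdot,h)$ in $L^2(M,\mu)$ for the transport maps $T_N$ of \cite[Theorem~9]{jonatim}, so along a subsequence the convergence is $\mu$-a.e. Since composition with $T_N$ commutes with the pointwise threshold, $\chi_N^h\circ T_N=\mathds{1}_{\{v_N(\cdot,h)\circ T_N\geq\frac12\}}$, and at every $x$ with $v(x,h)\neq\frac12$ the a.e.\ convergence forces $\chi_N^h\circ T_N(x)\to\mathds{1}_{\{v(x,h)\geq\frac12\}}(x)=\chi^h(x)$. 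It then remains to show that the critical level set $S\coloneqq\{v(\cdot,h)=\frac12\}$ is $\mu$-null. Excluding the trivial cases $\chi\equiv0,1$, the function $v(\cdot,h)=e^{k_2h\Delta}\chi$ is smooth and non-constant; since $t\mapsto v(x,t)$ is real-analytic on $(0,\infty)$, a Fubini argument shows $\mu(\{v(\cdot,t)=\frac12\})=0$ for all but countably many $t$, and for the exceptional "fattening" times one restricts, as is standard for the MBO scheme, to non-degenerate configurations; thus $\mu(S)=0$. Consequently $\chi_N^h\circ T_N\to\chi^h$ $\mu$-a.e., and since these functions are bounded by $1$, dominated convergence gives convergence in $L^2(M,\mu)$, in particular the claimed weak ${\TL}^2$-convergence. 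A subsequence argument as in the proof of Theorem~\ref{Thm:TLConv} — every subsequence has a further subsequence converging to the same $\chi^h$ — then upgrades this to the full sequence. This reproduces \cite[Corollary~1]{jonatim} with the optimal regime $r\gg\frac{\log^{1/d}N}{N^{1/d}}$ inherited from Theorem~\ref{Cor:HeatFlows}.

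The hard part will be the discontinuity of the thresholding: strong ${\TL}^2$-convergence of the diffused functions is not automatically preserved under thresholding, so one genuinely needs the critical level set $\{v(\cdot,h)=\frac12\}$ to be $\mu$-null. This is the only place where something beyond the soft minimizing-movement and $\Gamma$-convergence machinery is needed, and it is supplied by the smoothing and real-analyticity properties of the heat semigroup together with the Fubini argument above.
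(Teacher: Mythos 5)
Your decomposition---run the diffusion through Theorem~\ref{Cor:HeatFlows}, then pass the pointwise threshold to the limit---is the natural one, and you are right that Theorem~\ref{Cor:HeatFlows} needs only weak ${\TL}^2$-convergence of the initial data with no uniform Dirichlet-energy bound, which is what makes it applicable to characteristic functions. The gap is in establishing $\mu(\{v(\cdot,h)=\tfrac12\})=0$ at the \emph{fixed} time $h>0$. Your Fubini argument, based on real-analyticity in $t$, gives this only for almost every $t$, and falling back on ``non-degenerate configurations'' introduces an assumption not present in the statement. This is not cosmetic: if $\mu(S)>0$ with $S\coloneqq\{v(\cdot,h)=\tfrac12\}$, the sign of $v_N(\cdot,h)\circ T_N-\tfrac12$ need not stabilize on $S$, and $\chi_N^h\circ T_N$ can weakly accumulate at any $w$ agreeing with $\chi^h$ off $S$ and taking arbitrary values in $[0,1]$ on $S$---so even the weak conclusion fails without $\mu(S)=0$. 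The clean fix (on $\mathbb{T}^d$, or on a real-analytic $M$) is to use real-analyticity of $v(\cdot,h)$ in the \emph{space} variable at the fixed positive time $h$: since $\chi$ is a genuine characteristic function, $v(\cdot,h)\not\equiv\tfrac12$, so $S$ is a proper real-analytic variety and hence $\mu$-null, with no exceptional times at all.

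Two further remarks. Once $\mu(S)=0$ is secured, your argument actually yields strong ${\TL}^2$-convergence, which is more than the corollary asserts; this is consistent, since weak $L^2$-convergence of $\{0,1\}$-valued functions to a $\{0,1\}$-valued limit automatically upgrades to strong convergence. Note also that the paper does not prove the corollary but defers to \cite[Corollary~1]{jonatim}, whose route is variational rather than pointwise: $\chi_N^h$ minimizes the linear functional $\chi\mapsto\int\chi\,(1-2v_N(\cdot,h))\dd\mu_N$ over $\{0,1\}$-valued competitors; the strong ${\TL}^2$-convergence of $v_N(\cdot,h)$ supplied by Theorem~\ref{Cor:HeatFlows} lets one pass to the limit in this functional by weak--strong pairing together with recovery sequences for competitors; hence any weak subsequential limit of $\chi_N^h$ is a minimizer of the continuum thresholding functional, which coincides with $\chi^h$ precisely when $\mu(S)=0$. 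Both routes reach the same conclusion and require the same non-fattening input, which you should either prove as indicated or state as a hypothesis.
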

The proof is the same as in the reference. This corollary also works for the multiphase setting.

\section{Implementation and program variants}
\label{Program}

In this section, we discuss the implementation and variants of the scheme. Additionally, we show a way to extend it to inhomogeneous Neumann conditions with the so-called Young angle condition. Lastly, we conclude by discussing ways to incorporate Dirichlet conditions and how to include the framework of semi-supervised learning.

\subsection{Program}

The program allows to change $N, r$ and $\kappa$ for the kernel $A_{r,\kappa}$ as well as $\alpha$ for the Young angle freely. The points are standard sampled from the unit square but one can plug in other point processes for sampling as well. In Figure~\ref{Fig:Classifier}, examples of dumbbells in the task of classification are listed. Versions for the scheme on characteristic functions as well as for the level set evolution are supported separately.
For visualization purposes, the implementation is in 2d but everything works in higher dimensions as well. The implementations can be found on \url{https://mathrepo.mis.mpg.de/Medianfilter/}.

These encode different versions of the level set approach, a majority rule like version for each level set individually and different kernels. Additionally, comparisons to converging schemes to get an $L^\infty$-error or Hausdorff-distance error respectively, are supported. The latter can be used as an upper bound to the former. 
For Neumann boundary conditions, we use a $k$-select algorithm. The program works with sufficient speed with over 10 million samples.

\begin{figure}[!ht]
    \begin{tblr}{
      colspec = {X[c,h]X[c,h]X[c,h]X[c,h]},
      hlines = {1pt},
      vlines = {1pt},
      width=\linewidth,
    }
      \includegraphics[width=\linewidth]{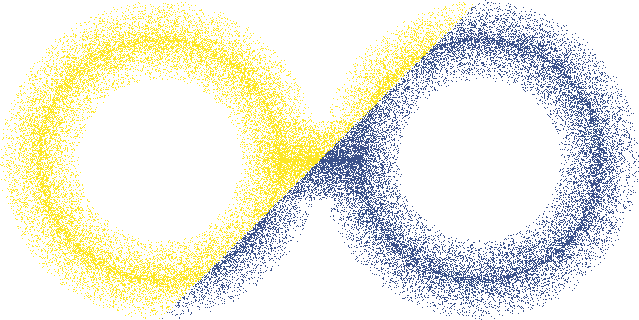} & \includegraphics[width=\linewidth]{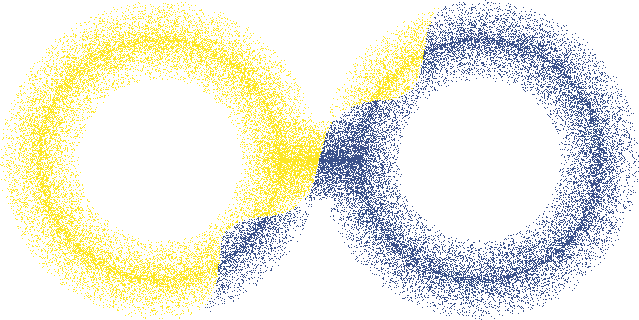} & \includegraphics[width=\linewidth]{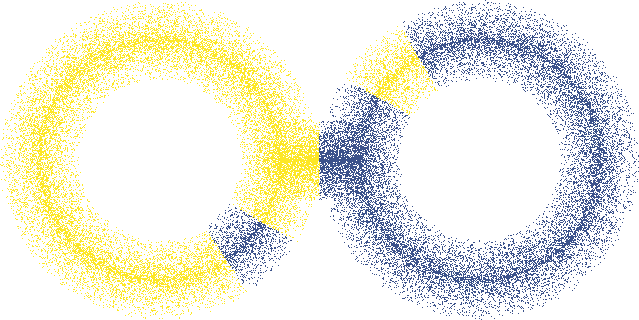} & \includegraphics[width=\linewidth]{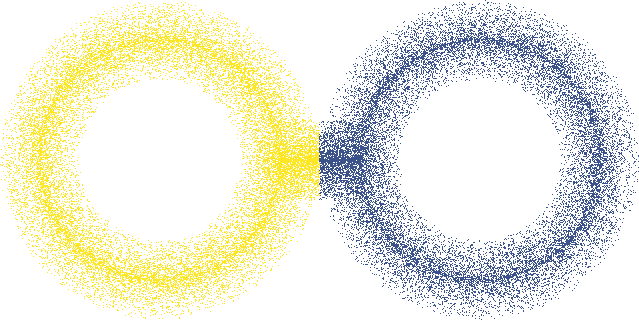} \\
      \includegraphics[width=\linewidth]{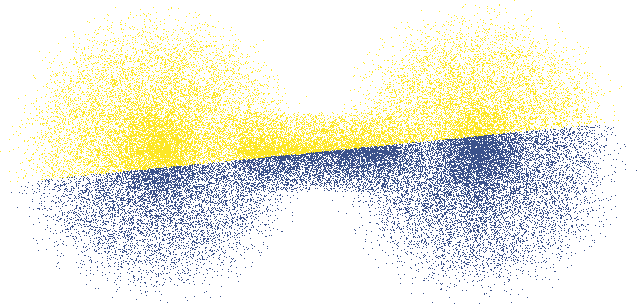} & \includegraphics[width=\linewidth]{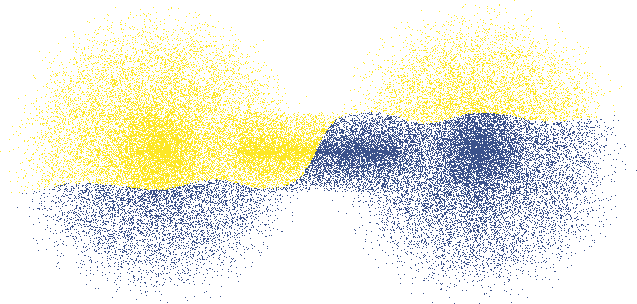} & \includegraphics[width=\linewidth]{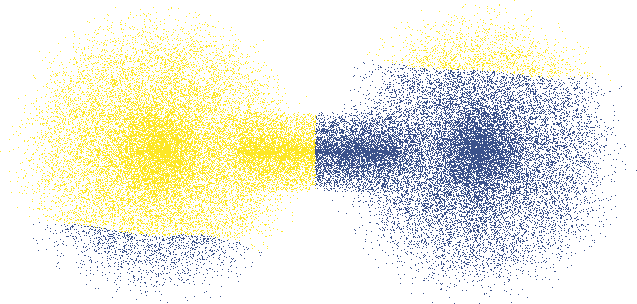} & \includegraphics[width=\linewidth]{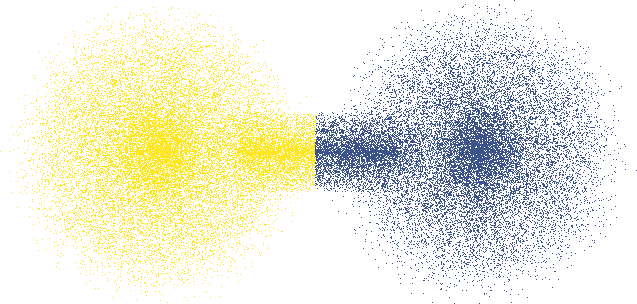} \\
      \includegraphics[width=\linewidth]{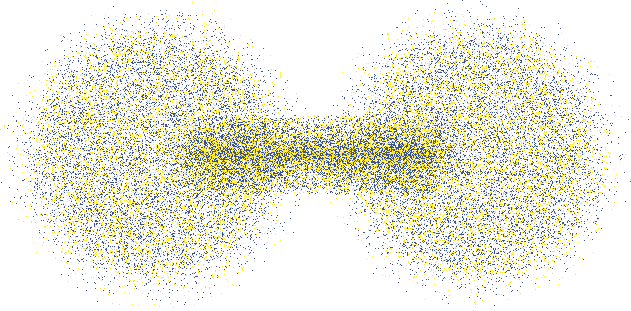} & \includegraphics[width=\linewidth]{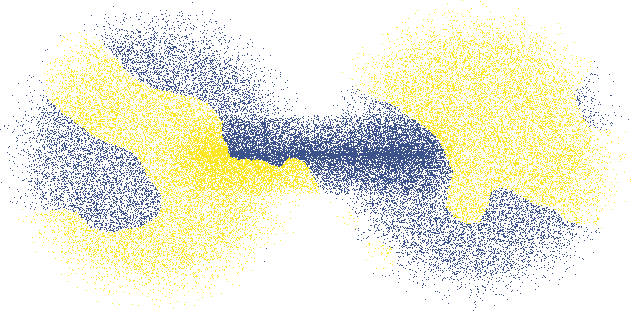} & \includegraphics[width=\linewidth]{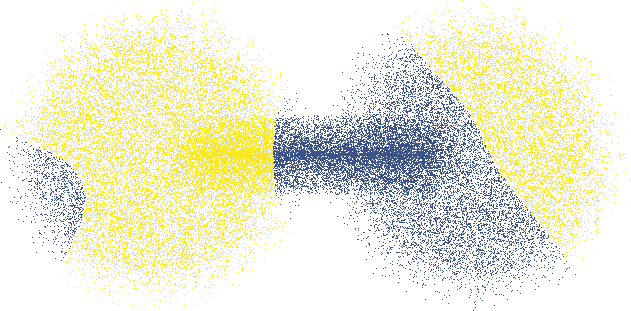} & \includegraphics[width=\linewidth]{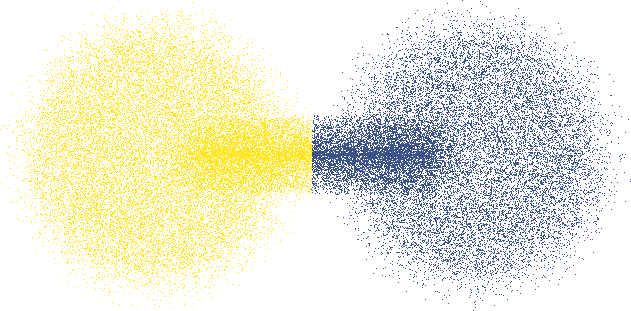} \\
    \end{tblr}
    \caption{Our algorithm applied to three classification problems. From left to right is the evolution over time.}
    \label{Fig:Classifier}
\end{figure}

For a kernel which is not a characteristic function, one can use~\cite[Algorithm 2]{esedoglumedianfilter} to compute the median, i.e., we order the values and increase them until the (in-)equalities defining the median are satisfied.

In the following, we will discuss simplifications which were used in the program and why they do not influence the result but speed up the computations. 
As discussed in Section~\ref{LinfConv}, the only case in which a ball produces a wrong result compared to an annulus is close to the extremal points if the derivative is too small. This is why, in the case of a single level set, we terminate the simulation if the external distance of the levelset becomes too small. Hence, we can use the ball neighborhood and view the change at points near extremal points as a relabeling/new level set function after each time step. The caveat of this is that the algorithm only produces correct results until the level set is too close to an extremal point.

Moreover, in the case of a single level set, we can restrict us to the threshold values $0$ and $1$ (after a relabeling). To see this, assume that we evolve $g$ into $u$ at the threshold $q$. We define the thresholding 
\begin{align*}
    T:\mathbb{R}&\to \{0,1\}\\
    x&\mapsto \mathds{1}_{\{x\geq q\}}.
\end{align*}
It then suffices to see that $T\circ \med(a_1,\dots, a_n)=\med(Ta_1,\dots, Ta_n)$. An iteration of this equality yields that we can start with the threshold values if we are finally only interested in the thresholds.

\subsection{Neumann conditions}

In this section, we will consider how to adapt the scheme for a bounded domain with (in)homogeneous Neumann conditions added to the evolution. In the following, let $D$ denote an open Lipschitz bounded set in $\mathbb{R}^d$ (manifolds would work as well).
These Neumann conditions are in the homogeneous case the usual geometric assumption that for a characteristic function the (generalized) boundary of the set meets the domain boundary orthogonally. In accordance, for a level set function this means that all level sets fulfill this condition, i.e., $\nabla u\cdot \nu_D=0$ on $\partial D$. In the inhomogeneous case, we will prescribe a fixed contact angle $\alpha$ to the boundary, see Figure~\ref{Fig:Young angle}. This is called a Young angle condition and can be written as $\frac{\nabla u}{|\nabla u|}\cdot \nu_D=\cos\alpha$.

\begin{figure}[ht]
    \begin{tblr}{
      colspec = {X[c,h]},
      hlines = {0pt},
      vlines = {0pt},
      width=0.45\linewidth,
    }
    \includegraphics[width=\linewidth]{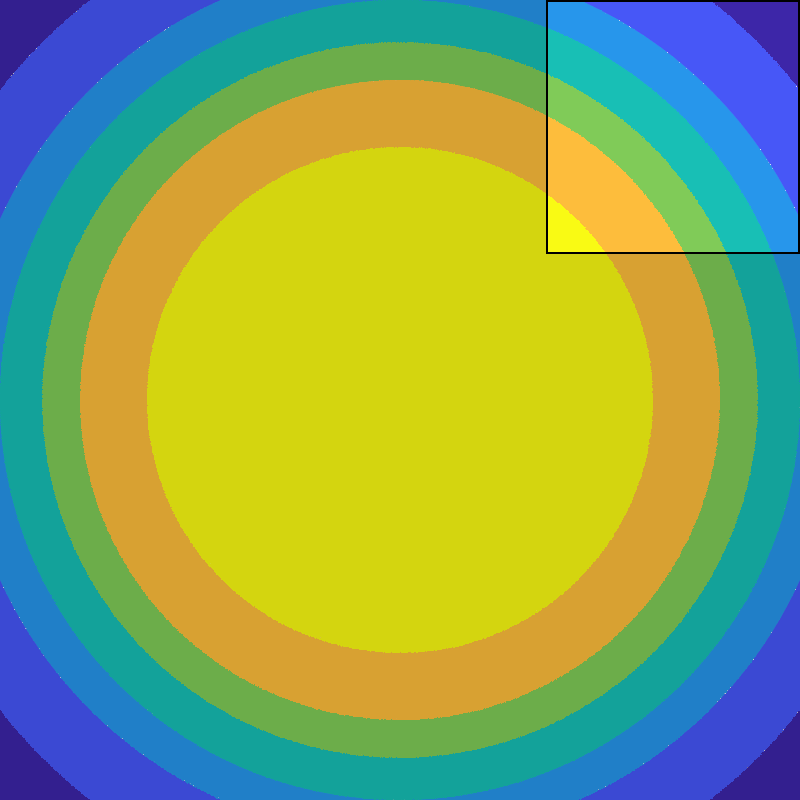}
    \end{tblr}
    \quad
    \begin{tblr}{
      colspec = {X[c,h]X[c,h]},
      hlines = {1pt},
      vlines = {1pt},
      width=0.45\linewidth,
    }
      \includegraphics[width=\linewidth]{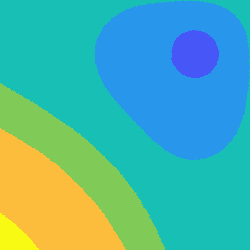} & \includegraphics[width=\linewidth]{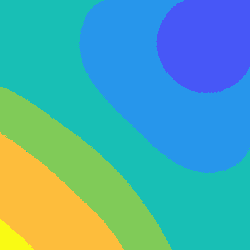} \\
      \includegraphics[width=\linewidth]{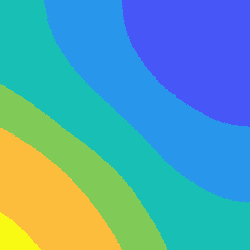} & \includegraphics[width=\linewidth]{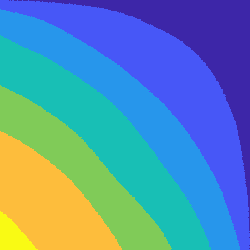} \\
    \end{tblr}
    \caption{Evolution of a quadratic radially symmetric function with different Young angle conditions. The prescribed angles are $0^\circ, 45^\circ, 90^\circ, 180^\circ$.}
    \label{Fig:Young angle}
\end{figure}

One important observation is that for the homogeneous case the Neumann conditions are incorporated in the thresholding scheme which also can be seen in the following way. We extend the characteristic function with the neutral value $\frac{1}{2}$ outside of the domain. This can be seen as mirroring the inside values and by symmetry, it yields the correct contact angle of $\frac{\pi}{2}$. The level set variant of this procedure is then that the update takes the restricted median to the inner of the domain, which can be seen as a mirroring as well.

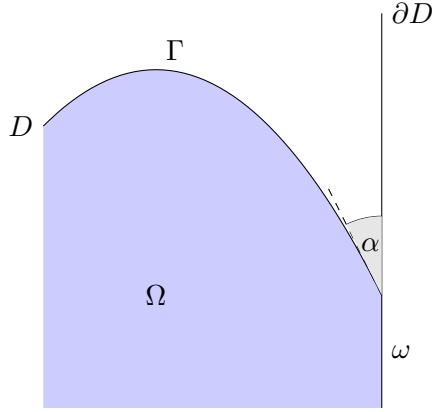
\begin{figure}[ht]
    \centering
        \begin{tikzpicture}[scale=1.5]
        \pgfsetlayers{pre main,main}
        \draw[domain=-1:2, smooth, variable=\x, black, name path = B, opacity=0] plot ({\x}, {-3});
        \draw[domain=-1:2, smooth, variable=\x, black, name path = A] plot ({\x}, {-0.5*\x*\x});
        \draw[dashed] (2,-2) -- (1.5,-1);
        \draw (2,-3) -- (2,0.5) node[right] {$\partial D$};
        \node[right] at (2,-2.5) {$\omega$};
        \node[left] at (-1,-0.5) {$D$};
        \node[above right] at (0,0) {$\Gamma$};
        \node at (0,-2) {$\Omega$};
        \draw (2,-2) ++(90:.7) arc (90:116.5:0.7);
        \fill[fill=gray!20!white] (2,-2) -- (2,-1.3) arc (90:116.5:0.7);
        \node at (1.9,-1.55) {$\alpha$};
        \tikzfillbetween[of=A and B]{blue, opacity=0.2};
        \end{tikzpicture}
    \caption{A visualization of the notation and Young angles.}
    \label{fig:YoungAngleVisualization}
\end{figure}

The energy one has to consider for a single set $\Omega\subseteq D$ is 
\begin{align*}
    E(\Omega)&\coloneqq\mathcal{H}^{d-1}(\partial^* \Omega\cap D)+\cos(\alpha)\mathcal{H}^{d-1}(\partial^*\Omega\cap \partial D)\\
    &=(1-\cos(\alpha))\mathcal{H}^{d-1}(\partial^* \Omega\cap D)+\cos(\alpha)\mathcal{H}^{d-1}(\partial^* \Omega).
\end{align*}
This energy is the same as in \cite{hensel2021bv}.
Equivalently, as $D$ is fixed, we can consider for any $c\in \mathbb{R}$ the energy
$$\mathcal{H}^{d-1}(\partial^* \Omega\cap D)+(c+\cos(\alpha))\mathcal{H}^{d-1}(\partial^*\Omega\cap \partial D)+c\mathcal{H}^{d-1}(\partial D\setminus \partial^*\Omega).$$
It can be seen that minimizers of this energy satisfy the Young angle condition by considering their Euler-Lagrange equation. Then, we can observe that by the gradient flow condition (the dissipation is exactly given by velocity and mean curvature) the contact angle at the boundary has to be constant and given by $\alpha$.

Heuristically this can be seen by the following formal computation for smooth sets. Let $\Gamma\coloneqq \partial \Omega\cap D$ be the inner free boundary and $\omega\coloneqq \partial^*\Omega\cap\partial D$ the shared boundary with the domain, see Figure~\ref{fig:YoungAngleVisualization}. Using \cite[Section 1.3]{ritore2023isoperimetric} and the divergence formula, we get for $\Omega_t\subseteq D$ a family of sets of finite perimeter with $B$ the motion vector field
\begin{align*}
     \frac{\dd}{\dd t}E(\Omega)&=\int\limits_{\Gamma}(\nabla \cdot B-\nu_\Omega\cdot\nabla B\nu_\Omega)\dd\mathcal{H}^{d-1}+\cos(\alpha)\int\limits_{\partial\omega}B\cdot\nu_{\omega}\dd\mathcal{H}^{d-2}.
\end{align*}
Using the distributional characterization of the mean curvature, this yields ($\overline{\Gamma}\cap \omega=\partial \omega=\partial\Gamma$)
\begin{align*}
     \frac{\dd}{\dd t}E(\Omega)&=\int\limits_{\Gamma} H(B\cdot \nu_\Omega)\dd\mathcal{H}^{d-1}+\int\limits_{\overline{\Gamma}\cap \omega}B\cdot \nu_{\Gamma}+\cos(\alpha) B\cdot \nu_{\omega}\dd\mathcal{H}^{d-2}.
\end{align*}

Observe, that $B$ always satisfies $B\cdot \nu_\Omega=V$ on $\Gamma$ and $\dot x=B$ for $x(t)\in \partial \omega$. Moreover, we can write $\nu_{\Gamma}$ as a linear combination of $\nu_D$ and $\nu_{\omega}$. Since we will not impose energy costs to lower dimensional sets, we can assume that the evolution is in an equilibrium at the domain boundary. Thus, the second integral on the RHS has to vanish for every admissible $B$ and since $\partial \omega\subseteq \partial D$, we get:
\begin{align*}
    0&=\int\limits_{\overline{\Gamma}\cap \omega}\nu_{\Gamma}\cdot \nu_{\omega} B\cdot \nu_{\omega}+\cos(\alpha) B\cdot \nu_{\omega}\dd\mathcal{H}^{d-2}.
\end{align*}
This implies a contact angle of $\alpha$ as we can choose test vector fields $B$:
\begin{align*}
    \nu_{\Gamma}\cdot \nu_{\omega} &= -\cos(\alpha) \text{ on }\overline{\Gamma}\cap \omega.
\end{align*}

The main observation in this section is the fact that we can adapt the MBO thresholding scheme to Neumann conditions by a simple extension. For characteristic functions this is done by considering a constant value outside of the domain. For the level set evolution, we use the layer cake approach to get the general scheme. In the following lemma, we state the precise formulation.

Similar to Definition~\ref{Def:AdmissibleKernel}, we call a kernel admissible for this scheme if the same assumptions are satisfied where the third assumption is replaced by
$\int\limits_{0}^\infty K(r)r^{d}\dd r<\infty.$
This is equivalent to the assumption that $k_1<\infty$, see Lemma~\ref{Lemma:NeumannConditions}.

\begin{defi}[Young angle condition --- scheme]
    \label{Def:YoungAngle}
    Let $D\subseteq \mathbb{R}^d$ be an open bounded Lipschitz domain and $K$ be an admissible kernel on $D$. Given a Young angle $\alpha\in (0,\pi)$, we define
    $$s\coloneqq\sin^2\left(\frac{\alpha}{2}\right)=\frac{1-\cos\alpha}{2}.$$
    Then, the Neumann MBO-thresholding scheme is defined on characteristic functions by
    $$\chi^{n+1}\coloneqq\mathds{1}_{\{K*(\chi^n\chi_D+s\chi_{D^c})\geq \frac{1}{2}\}}.$$
    Applied to level set functions, this yields the evolution 
    $$ u^{n+1}(x)\coloneqq \pmed\limits_{B_r(x)\cap D} u^n$$
    where $p$ is given by $$p=-(1-2s)\frac{|B_r(x)\cap D^c|}{|B_r(x)\cap D|}.$$
    
    Here, for $p\in [-1,1]$, the $p$-median $m=\pmed\limits_{A} u$ is defined as the infimum over all $m$ that satisfy
    $$p\leq \fint\limits_A \sign (m-u(y))\dd y.$$
    With $p=0$, one recovers the usual median.
    If $p>1$, $u$ attains the value $\infty$ and for $p<-1$, this becomes $-\infty$.
\end{defi}

Here, the notation $(K*f)(x)\coloneqq \int K(x,y)f(y)\dd y$ is a slight abuse of notation which becomes the usual convolution for $K(x,y)=K(x-y)$ and is used to show the similarity to the MBO scheme.
\begin{remark}
    In general, the minimization problems we analyze don't have unique minimizers. This is reflected in the schemes by the value $\chi$ takes at the boundary points where $$K*(\chi^n\chi_D+s\chi_{D^c})=\frac{1}{2}.$$
    For $u$, the choice lies in the fact that we take the infimum over all possible choices for $m$ which is coherent which the choice of $\geq$ in $\chi$ as they reflect the sub level sets of $u$. Other choices would be viable as well, this one is consistent with Section~\ref{DiscreteCase}.
\end{remark}
The step of how to get from the evolution for $\chi$ to the evolution of $u$ will be explained in the second part of the proof of Lemma~\ref{Lemma:NeumannConditions}.
\begin{remark}
    Other kernels and densities are also applicable as the calculations are all local. In the case of another kernel, we would get the defining equation (to be precise the infimum over the corresponding inequality):
    $$\int\limits_D K(x,y)\sign(u^{n+1}(x)-u^n(y))\dd y=-(1-2s)\int\limits_{D^c} K(x,y)\dd y.$$
\end{remark}

\begin{remark}
    The degenerate cases with $|p|>1$ can only occur for $s\neq \frac{1}{2}$ as with $s=\frac{1}{2}$, we have $p=0$ and are in the homogeneous Neumann case. Moreover, this case needs a specific density of our domain as can be seen in the discussion which follows the proof of the next lemma. 

    For the algorithm, we note that the $p$-median can be reformulated into a $k$-select criterion as it reflects that a percentage of values lies over our $p$-median. This also shows the connection to the $k$th-order statistic.
\end{remark}

The next lemma shows why we call this evolution the Neumann thresholding scheme.

\begin{lemma}[Neumann conditions - Young angle]
\label{Lemma:NeumannConditions}
    Let $\chi^n$ be a evolution of characteristic functions according to the thresholding scheme associated with the Young angle condition, see Definition~\ref{Def:YoungAngle}. 
    
    This evolution can be written as a minimizing movement scheme
    $$\chi^{n+1}\in\argmin\left\{E_h(\chi)+\frac{1}{2h}d_h^2(\chi, \chi^n)\right\}$$
    with
    \begin{align*}
        E_h&\coloneqq \frac{2s}{\sqrt{h}}\int\limits_D\int\limits_D K_h(x,y)|\chi(x)-\chi(y)|\dd y\dd x\\
        &\quad+\frac{1-2s}{\sqrt{h}}\int\limits_{\mathbb{R}^d}\int\limits_{\mathbb{R}^d} K_h(x,y)|\chi_D(x)\chi(x)-\chi_D(y)\chi(y)|\dd y\dd x,\\
        d_h^2(\chi, \chi^n)&\coloneqq 4\sqrt{h}\int\limits_{D}\int\limits_{D} K_h(x,y)(\chi(x)-\chi^n(x))(\chi(y)-\chi^n(y))\dd y\dd x.
    \end{align*}
    These energies $E_h$ $\Gamma$-converge to 
    $$k_1E(\Omega)= k_1\mathcal{H}^{d-1}(\partial^* \Omega\cap D)+k_1\cos(\alpha)\mathcal{H}^{d-1}(\partial^*\Omega\cap \partial D)$$ 
    in the strong $L^1$-topology. Moreover, they satisfy a $\Gamma$-compactness in the same topology.
\end{lemma}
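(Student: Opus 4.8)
Here is how I would approach the proof.

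The statement has two essentially independent halves: the algebraic identity that the scheme is a minimizing movement for $(E_h,d_h)$, and the $\Gamma$-convergence together with the $\Gamma$-compactness of the $E_h$.

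For the first half the plan is simply to expand. Since $\chi$ is $\{0,1\}$-valued, $|\chi(x)-\chi(y)|=\chi(x)+\chi(y)-2\chi(x)\chi(y)$, and the same for the zero-extension $\chi_D\chi$; the bilinear term in $d_h^2$ splits into a quadratic piece $\iint K_h(x,y)\chi(x)\chi(y)$, a piece linear in $\chi$, and a $\chi$-independent constant. Collecting in $\chi\mapsto E_h(\chi)+\tfrac{1}{2h}d_h^2(\chi,\chi^n)$ the coefficient of $\iint K_h\chi(x)\chi(y)$ over $D\times D$, the contributions cancel exactly, $-\tfrac{4s}{\sqrt h}-\tfrac{2(1-2s)}{\sqrt h}+\tfrac{2}{\sqrt h}=0$, so the functional is affine in $\chi$. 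Its maximal minimizer over $\{0,1\}$-valued functions is then obtained by thresholding the linear coefficient below $0$; rewriting that coefficient via $K_h*\chi_D+K_h*\chi_{D^c}=K_h*1$ turns it (up to a positive factor, and with $K$ normalized to unit mass, as the threshold $\tfrac12$ requires) into $\tfrac12-K_h*(\chi^n\chi_D+s\chi_{D^c})$, which is exactly the rule of Definition~\ref{Def:YoungAngle}. The passage to the $\pmed$-form for level-set functions is then the layer-cake identity $T\circ\med=\med\circ T$ from Section~\ref{Program} applied to every sublevel threshold of $u^n$, the constant outside value $s$ translating (by monotonicity of the threshold-$\tfrac12$ condition in the level) into the weight $p=-(1-2s)\tfrac{|B_r(x)\cap D^c|}{|B_r(x)\cap D|}$.

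For the $\Gamma$-convergence I would write $E_h=2s\,\mathcal E_h^D+(1-2s)\,\mathcal E_h^{\mathbb R^d}$ with $\mathcal E_h^A(\chi)=\tfrac{1}{\sqrt h}\int_A\int_A K_h(x,y)|\chi(x)-\chi(y)|\,dy\,dx$ (for the second summand $\chi$ is extended by $0$ and $A=\mathbb R^d$). Each summand is a non-local approximation of a perimeter: by the classical non-local-to-local $\Gamma$-convergence for non-increasing kernels with $\int_0^\infty K(\rho)\rho^{d}\,d\rho<\infty$ (equivalently $k_1<\infty$), one has in $L^1$ that $\mathcal E_h^{\mathbb R^d}\overset{\Gamma}{\to}k_1\Per(\cdot)$ and $\mathcal E_h^{D}\overset{\Gamma}{\to}k_1\Per(\cdot\,;D)$ with $k_1=\int_{\mathbb R^d}K(|z|)|z_1|\,dz$. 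For $\Omega\subseteq D$ whose zero-extension has finite perimeter, $\Per(\Omega)=\mathcal H^{d-1}(\partial^*\Omega\cap D)+\mathcal H^{d-1}(\partial^*\Omega\cap\partial D)$, the second term being the wetted area (here $D$ Lipschitz ensures boundary traces are well defined). Summing the two liminf inequalities and using $1-2s=\cos\alpha$ yields the liminf inequality for $E_h$ with limit $k_1\bigl(\mathcal H^{d-1}(\partial^*\Omega\cap D)+\cos\alpha\,\mathcal H^{d-1}(\partial^*\Omega\cap\partial D)\bigr)=k_1E(\Omega)$. For the recovery sequence, when $\Omega$ has smooth relative boundary meeting $\partial D$ transversally the constant sequence $\chi_h\equiv\mathds 1_\Omega$ already attains the limit — change of variables $z=(x-y)/\sqrt h$ plus $L^1$-differentiability of translates of $BV$ functions, keeping track of the $\Omega$–$D^c$ interaction term that distinguishes $\Per(\Omega;D)$ from $\Per(\Omega)$ in the first summand — and a density and diagonal argument in the spirit of \cite{hensel2021bv} extends this to general $\Omega$. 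For the $\Gamma$-compactness: if $\limsup_h E_h(\chi_h)<\infty$ then, since $s\in(0,1)$, $\limsup_h\mathcal E_h^{D}(\chi_h)<\infty$; because $K(0)>0$ with $K$ continuous at $0$ one has $K_h\geq c\,h^{-d/2}$ on a ball of radius $\propto\sqrt h$, which bounds $\|\chi_h-\chi_h(\cdot-\xi)\|_{L^1(D)}$ linearly in $|\xi|$ for small $\xi$, so $(\chi_h)$ is $L^1$-precompact by Fréchet–Kolmogorov and any limit has finite perimeter.

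The routine parts are the cancellation in the minimizing-movement identity and the compactness estimate. I expect the genuine difficulty to be the boundary analysis inside the $\Gamma$-convergence: checking that the two non-local terms split the perimeter into precisely the interior part $\mathcal H^{d-1}(\partial^*\Omega\cap D)$ and the wetted part $\mathcal H^{d-1}(\partial^*\Omega\cap\partial D)$ — which is exactly what produces the $\cos\alpha$ weight — and building recovery sequences that approximate the relative perimeter and the boundary trace simultaneously. This is the familiar subtlety in $\Gamma$-limits of capillarity-type energies, and I would resolve it by reduction to the known results (\cite{hensel2021bv}, \cite{chambolle2014remark}, and the non-local-perimeter literature) rather than redoing the blow-up analysis from scratch.
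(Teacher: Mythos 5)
Your approach to the minimizing-movement identity is correct and essentially the same as the paper's: both amount to expanding $|\chi(x)-\chi(y)|=\chi(x)+\chi(y)-2\chi(x)\chi(y)$ and $(\chi-\chi^n)(x)(\chi-\chi^n)(y)$ and observing that the bilinear terms in $\chi$ cancel, leaving an affine functional whose pointwise minimizer is the threshold rule of Definition~\ref{Def:YoungAngle}; the cancellation $-\tfrac{4s}{\sqrt h}-\tfrac{2(1-2s)}{\sqrt h}+\tfrac{2}{\sqrt h}=0$ checks out. For the $\Gamma$-convergence you take the same decomposition $E_h=2s\,\mathcal E^D_h+(1-2s)\,\mathcal E^{\mathbb R^d}_h$ as the paper's first argument via \cite[Theorem~4.1]{TrillosSlepcev}, with constant recovery sequences and the $\liminf$-superadditivity inequality. (The paper also records a second, more general route via the multiphase heat-content $\Gamma$-convergence of \cite{esedoglu2015threshold,jonatim}, which you do not need.)

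There is, however, a genuine gap in your $\Gamma$-convergence and $\Gamma$-compactness arguments: the case $s>\tfrac12$, i.e.\ $\alpha>\tfrac\pi2$ (which is explicitly allowed, $\alpha\in(0,\pi)$). For $s>\tfrac12$ the coefficient $1-2s=\cos\alpha$ is negative, so you cannot ``sum the two liminf inequalities'' — you would need $\limsup_h\mathcal E^{\mathbb R^d}_h(\chi_h)\le k_1\Per(\Omega)$, which the $\Gamma$-liminf inequality does not give. Likewise, $\limsup_h E_h(\chi_h)<\infty$ no longer forces $\limsup_h\mathcal E^D_h(\chi_h)<\infty$: the bound $2s\,\mathcal E^D_h\le E_h$ fails because the negative term $(1-2s)\,\mathcal E^{\mathbb R^d}_h$ can cancel a diverging $\mathcal E^D_h$, so your Fréchet--Kolmogorov compactness argument does not close. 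The paper handles this by the reflection $\chi\mapsto\chi_D-\chi$, $s\mapsto 1-s$: one checks, using the identity
\[
|(\chi_D-\chi_D\chi)(x)-(\chi_D-\chi_D\chi)(y)|+|\chi_D\chi(x)-\chi_D\chi(y)|
=|\chi_D(x)-\chi_D(y)|+2\chi_D(x)\chi_D(y)\,|\chi(x)-\chi(y)|,
\]
that $E_h^{s}(\chi)=E_h^{1-s}(\chi_D-\chi)-\tfrac{2s-1}{\sqrt h}\iint K_h|\chi_D(x)-\chi_D(y)|\,\dd y\,\dd x$, where the subtracted term is a $\chi$-independent constant converging to $(2s-1)k_1\mathcal H^{d-1}(\partial D)$. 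Since $d_h^2$ is invariant under this reflection and $\Gamma$-convergence is stable under additive convergent constants, the case $s>\tfrac12$ reduces to $1-s<\tfrac12$. Without this step your proof only covers $\alpha\in(0,\tfrac\pi2]$.

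A smaller imprecision: the passage to the $p$-median for level-set functions is stated as a direct consequence of the commutation of thresholding with the median, but the update here is not a median of $u^n$ — it is a threshold of a convolution with a prescribed outside value. The paper obtains the $p$-median form by integrating the minimizing-movement functional over levels $\chi_q^n=\mathds 1_{\{u^n<q\}}$ and reading off the Euler--Lagrange (in)equalities, which produces the boundary weight $p=-(1-2s)\int_{D^c}K(x,y)\,\dd y$; the bare layer-cake remark does not yet justify this formula.
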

Here, $k_1\coloneqq\int\limits_{\mathbb{R}^d}K(|x|)|x_1|\dd x$ depends only on the kernel used.
For the proof of this lemma, we will use $\Gamma$-convergence results of \cite{jonatim, TrillosSlepcev, esedoglu2015threshold}.

\begin{proof}
    We start with a reformulation of the scheme in a minimization scheme.
    For this, we consider characteristic functions with the evolution where we fix a value
    $$s=\frac{1-\cos\alpha}{2}=\sin^2\frac{\alpha}{2}$$
    outside of the domain. The evolution becomes:
    \begin{align*}
        \chi^{n+1}&\coloneqq\mathds{1}_{\{K*(\chi^n\chi_D+s\chi_{D^c})\geq \frac{1}{2}\}}\\
        &=\mathds{1}_{\{K*((2\chi^n-2s)\chi_D+2s-1)\geq 0\}}.
    \end{align*}
    This satisfies the following minimizing movement formulation
    \begin{align*}
        \chi^{n+1}&\in\argmin\limits_\chi\left\{\int\limits_{\mathbb{R}^d}\chi_D\chi K*((2s-2\chi^n)\chi_D+1-2s)\dd x\right\}\\
        &=\argmin\limits_\chi\left\{\int\limits_{\mathbb{R}^d}\chi_D\chi K*(2s\chi_D(1-\chi))\dd x\right.\\
        &\left.\hspace*{5em}+\int\limits_{\mathbb{R}^d}\chi_D\chi K*(2s\chi_D\chi-2\chi_D\chi^n+1-2s)\dd x\right\}\\
        &=\argmin\limits_\chi\left\{2s\int\limits_{\mathbb{R}^d}\chi_D\chi K*(\chi_D(1-\chi))\dd x\right.\\
        &\left.\hspace*{5em}+(1-2s)\int\limits_{\mathbb{R}^d}\chi_D\chi K*(1-\chi_D\chi)\dd x\right.\\
        &\left.\hspace*{5em}+\int\limits_{\mathbb{R}^d}\chi_D(\chi-\chi^n) K*(\chi_D(\chi-\chi^n))\dd x\right\}\\
        &=\argmin\limits_\chi\left\{2s\int\limits_{D}\int\limits_{D}K(x,y)|\chi(x)-\chi(y)|\dd x\right.\\
        &\left.\hspace*{5em}+(1-2s)\int\limits_{\mathbb{R}^d}\int\limits_{\mathbb{R}^d}K|\chi\chi_D-\chi\chi_D|\dd y\dd x\right.\\
        &\left.\hspace*{5em}+2\int\limits_{D}\int\limits_{D}K(x,y)(\chi-\chi^n)(x)(\chi-\chi^n)(y)\dd y\dd x\right\}.
    \end{align*}
    
    Now, we are able to show the $\Gamma$-converge of 
    $$2s\int\limits_{D}\int\limits_{D}K(x,y)|\chi(x)-\chi(y)|\dd x+(1-2s)\int\limits_{\mathbb{R}^d}\int\limits_{\mathbb{R}^d}K|\chi\chi_D-\chi\chi_D|\dd y\dd x$$
    to
    $$k_1E(\Omega)=2sk_1\mathcal{H}^{d-1}(\partial^*\Omega\cap D)+(1-2s)k_1\mathcal{H}^{d-1}(\partial^* \Omega).$$
    
    Indeed, this follows from \cite[Theorem 4.1]{TrillosSlepcev}. The first part of the energy $\Gamma$-converges for Lipschitz domains $D$ to the relative perimeter and fulfills a $\Gamma$-compactness for $s>0$. Both of these hold in the strong $L^1$-topology. We note that by symmetry (otherwise we can analyze $D\setminus \Omega$ instead of $\Omega$), we can assume $s\leq\frac{1}{2}$. The second part follows analogously to the first one if we restrict to sets contained in $D$. This is feasible due to the fact that the recovery sequence can be chosen to be the constant sequence and thus leaves $D$ unchanged. This fact also directly implies the existence of the recovery sequence for the sum. Lastly, we note that for $F_n\overset{\Gamma}{\to}F$ and $G_n\overset{\Gamma}{\to}G$, we have for all $x_n\to x:$
    $$F(x)+G(x)\leq \liminf\limits_{n\to\infty} F_n(x_n)+\liminf\limits_{n\to\infty} G_n(x_n)\leq \liminf\limits_{n\to\infty} F_n(x_n)+G_n(x_n).$$
    Therefore, we get the $\Gamma$-convergence and $\Gamma$-compactness for the whole energy and all $s\in [0,1]$. The proof extends to the setting where $D$ is an arbitrary bounded Lipschitz domain in $\mathbb{R}^d$ with a continuous density that is bounded from above and below by a positive constant.
    
    For the general case, we can use \cite[Theorem 3]{jonatim} building on \cite{esedoglu2015threshold}. The main observation is that we can see our energy as the energy of a three-phase system with phases $\Omega_1\coloneqq \Omega, \Omega_2\coloneqq D\setminus\Omega$ and $\Omega_3\coloneqq D^c$. Then,
    \begin{align*}
        E(\Omega)&=\mathcal{H}^{d-1}(\Sigma_{12})+(\cos(\alpha)+1)\mathcal{H}^{d-1}(\Sigma_{13})+\mathcal{H}^{d-1}(\Sigma_{23})
    \end{align*}
    where $\Sigma_{ij}$ denotes the interface between faces $\Omega_i$ and $\Omega_j$.
    
    Note, that these surface tensions are non-negative (and not all zero) and satisfy the triangle inequality. Then, it is feasible to use the result of \cite{jonatim} due to the fact that the recovery sequences are the constant sequences and thus, we don't leave our class of fixed $D$. 
    Using this notation, we can reformulate the minimization problem as 
    \begin{align*}
        \chi^{n+1}&\in\argmin\limits_\chi\left\{2s\int\limits_{\mathbb{R}^d}\int\limits_{\mathbb{R}^d}\chi_D\chi K(\chi_D(1-\chi))\dd y\dd x\right.\\
        &\left.\hspace*{5em}+(1-2s)\int\limits_{\mathbb{R}^d}\int\limits_{\mathbb{R}^d}\chi_D\chi K(1-\chi_D\chi)\dd y\dd x\right.\\
        &\left.\hspace*{5em}+\int\limits_{\mathbb{R}^d}\int\limits_{\mathbb{R}^d}\chi_DK(1-\chi_D)\dd y\dd x\right.\\
        &\left.\hspace*{5em}+\int\limits_{\mathbb{R}^d}\int\limits_{\mathbb{R}^d}\chi_D(\chi-\chi^n) K(\chi_D(\chi-\chi^n))\dd y\dd x\right\}\\
        &=\argmin\limits_\chi\left\{2s\int\limits_{\mathbb{R}^d}\int\limits_{\mathbb{R}^d}\chi_{\Omega_1} K \chi_{\Omega_2}\dd y\dd x\right.\\
        &\left.\hspace*{5em}+(1-2s)\int\limits_{\mathbb{R}^d}\int\limits_{\mathbb{R}^d}\chi_{\Omega_1}K (\chi_{\Omega_2}+\chi_{\Omega_3})\dd y\dd x\right.\\
        &\left.\hspace*{5em}+\int\limits_{\mathbb{R}^d}\int\limits_{\mathbb{R}^d}(\chi_{\Omega_1}+\chi_{\Omega_2})K \chi_{\Omega_3}\dd y\dd x\right.\\
        &\left.\hspace*{5em}+\int\limits_{\mathbb{R}^d}\int\limits_{\mathbb{R}^d}\chi_D(\chi-\chi^n) K(\chi_D(\chi-\chi^n))\dd y\dd x\right\}\\
        &=\argmin\limits_\chi\left\{\int\limits_{\mathbb{R}^d}\int\limits_{\mathbb{R}^d}\chi_{\Omega_1} K \chi_{\Omega_2}\dd y\dd x\right.\\
        &\left.\hspace*{5em}+(1+\cos\alpha)\int\limits_{\mathbb{R}^d}\int\limits_{\mathbb{R}^d}\chi_{\Omega_1}K \chi_{\Omega_3}\dd y\dd x\right.\\
        &\left.\hspace*{5em}+\int\limits_{\mathbb{R}^d}\int\limits_{\mathbb{R}^d}\chi_{\Omega_2}K \chi_{\Omega_3}\dd y\dd x\right.\\
        &\left.\hspace*{5em}+\int\limits_{\mathbb{R}^d}\int\limits_{\mathbb{R}^d}\chi_D(\chi-\chi^n) K(\chi_D(\chi-\chi^n))\dd y\dd x\right\}.
    \end{align*}
    This proof also works in the case where $D$ is merely a set of finite perimeter. The caveat of this method is that the kernels in \cite{esedoglu2015threshold} are more restrictive. To be precise, the conditions on $K$ are radial symmetry, integrability, non-negativity, $k_1<\infty$ as well as $|\nabla G(x)|\leq CG(\frac{x}{2})$ and $\nabla G(x)\cdot x\leq 0$. In \cite[Theorem 3]{jonatim}, only the heat kernel is considered. On the other hand, this work is done on smooth manifolds with smooth densities. An extension to more general domains can be achieved by Theorem~\ref{Thm:GammaTV}.

    Next, we prove how to get from the formulation for characteristic functions to the formulation for level set functions.

    For the level set function $u$, we will reformulate the minimizing movement into a form which can be integrated to yield a new evolution for $u^n$ which can be used to get the explicit form.
    \begin{align*}
        \chi^{n+1}&\in\argmin\limits_\chi\left\{\int\limits_{\mathbb{R}^d}\int\limits_{\mathbb{R}^d}\chi_D(x)\chi(x) K ((2s-2\chi^n(y))\chi_D(y)+1-2s)\dd y\dd x\right\}\\
        &=\argmin\limits_\chi\left\{\int\limits_{\mathbb{R}^d}\int\limits_{\mathbb{R}^d}2s K\chi_D(x)\chi_D(y)\chi(x)-2 K\chi_D(x)\chi_D(y)\chi(x)\chi^n(y)\right.\\
        &\hspace*{5em}+(1-2s)K\chi_D(x)\chi(x)\dd y\dd x\Bigg\}\\
        &=\argmin\limits_\chi\left\{\int\limits_{\mathbb{R}^d}\int\limits_{\mathbb{R}^d}K\chi_D(x)\chi_D(y)\chi(x)-2 K\chi_D(x)\chi_D(y)\chi(x)\chi^n(y)\right.\\
        &\hspace*{5em}+(1-2s)K\chi_D(x)\chi(x)(1-\chi_D(y))\dd y\dd x\Bigg\}\\
        &=\argmin\limits_\chi\left\{\int\limits_{\mathbb{R}^d}\int\limits_{\mathbb{R}^d}K\chi_D(x)\chi_D(y)|\chi(x)-\chi^n(y)|\right.\\
        &\hspace*{5em}+(1-2s)K\chi_D(x)\chi(x)(1-\chi_D(y))\dd y\dd x\Bigg\}.
    \end{align*}
    This can be used to compute the minimization problem for $u$ when plugging in $\chi_q^n=\mathds{1}_{\{u^n<q\}}$:
    \begin{align*}
        u^{n+1}&\in\argmin\limits_u\left\{\int\limits_{D}\int\limits_{D}K|u(x)-u^n(y)|\dd y\dd x+(1-2s)\int\limits_{D}u(x)\int\limits_{D^c}K\dd y\dd x\right\}.
    \end{align*}
    The Euler-Lagrange equation for almost every $x$ is precisely  
    \begin{align*}
        \int\limits_D K(x,y)\sign(u^{n+1}(x)-u^n(y))\dd y&=-(1-2s)\int\limits_{D^c}K(x,y)\dd y.
    \end{align*}
    Note, that it becomes two linked inequalities in the case of fattening as discussed in Section~\ref{Algorithm}.
    This amounts to the evolution
    $$u^{n+1}(x)\coloneqq {\pmed\limits_{D}}^K u^n$$
    with $p=-(1-2s)\int\limits_{D^c}K(x,y)\dd y$ which is for the normalized ball-kernel $p=-(1-2s)\frac{|B_r(x)\cap D^c|}{|B_r(x)\cap D|}$. As discussed previously, for $p>1$ we set $u^{n+1}=\infty$ and for $p=-1$, $u^{n+1}=-\infty$.
\end{proof}

For this scheme, with the ball kernel, it can happen that $p\not\in [-1,1]$ and thus $u$ would formally become the value $+\infty$ or $-\infty$. This can only happen if 
$$\frac{|B_r(x)\cap D^c|}{|B_r(x)\cap D|}>\frac{1}{|1-2s|}.$$
Equivalently in the ($r$-)density $\theta\coloneqq\frac{|B_r(x)\cap D|}{|B_r(x)|}$, this can be written as 
$$\theta<\frac{1}{1+\frac{1}{|1-2s|}}.$$
Thus, this phenomenon only occurs at corners and at high angles $\alpha$ in at most an $r$-neighborhood at each step.

To preserve the bounds on the value of the evolution, we propose a min-max-cutoff. For this, the value of $u$ is defined to be
$$ u^{n+1}(x)\coloneqq \pmed\limits_{B_r(x)\cap D} u^n$$
for $p\in (-1,1)$ and for $p\geq 1$, we define it as the limit of the $p$-median as $p\nearrow 1$. Similarly for $p\leq -1$. This corresponds to the local supremum or infimum, respectively. 
In the scheme, this can be recovered using a cutoff of the function $f(x)=-(1-2s)\frac{x}{1-x}$. So far, we had $p=f\left(\frac{|B_r(x)\cap D^c|}{|B_r(x)|}\right)$. This is replaced by $-1\lor f(x)\land 1$. In this case, the finite values of the evolution would remain the same whereas $\pm\infty$ would become the local supremum or infimum as desired. In the scheme for characteristic functions, this is reflected as a solution of the minimizing movement

\begin{align*}
    \chi^{n+1}&=\argmin\limits_\chi\left\{\left(2s\int\limits_D\int\limits_D K|\chi-\chi|\dd y\dd x +(1-2s)\int\limits_{\mathbb{R}^d}\int\limits_{\mathbb{R}^d}K|\chi\chi_D-\chi\chi_D|\dd y\dd x\right)\right.\\
    &\quad\land 2\left(\int\limits_D\int\limits_DK|\chi-\chi|\dd y\dd x+\int\limits_D\int\limits_D\chi K\chi\dd y\dd x\right)\lor \left(-2\int\limits_D\int\limits_D \chi K\chi \dd y\dd x\right)\\
    &\quad\left.+2\int\limits_D\int\limits_D (\chi-\chi^n)K(\chi-\chi^n)\dd y\dd x\right\}.
\end{align*}
We note that $\int\limits_D\int\limits_D\chi K\chi\dd y\dd x$ converges to the squared $L^2$-norm which is equivalent to the $L^1$-norm for characteristic functions. Moreover, if the kernel $K(x,y)$ has a positive Fourier transform and can be written as $K(x-y)$ (e.g., the heat kernel), it can be split and terms of the form $\int\int u K u\dd y\dd x$ can be seen as a $L^2$-norm $\norm{G*u}_{L^2}^2$ where $G$ is chosen s.t.\ $K=G*G$.

In detail, for the restricted functions $\chi_n\to \chi$ in $L^1(D)$, $\chi_n,\chi\in L^1(D;[0,1])$, we have
\begin{align*}
    \liminf\limits_{n\to \infty}\int \chi_n(x)\fint\limits_{B_{r(n)}(x)}\chi_n(y)\dd y\dd x>0 \text{ iff }\chi\not\equiv 0.
\end{align*}

First, for the kernel $K(x,y)=\frac{1}{\omega_d}\mathds{1}_{B_1(x)}(y)$, we compute
\begin{align*}
    \int\limits_D &\left|\frac{1}{r^d\omega_d}\int\limits_{D\cap B_r} \chi_n(y)\dd y-\chi(x)\right|\dd x\\
    &\leq \int\limits_D \frac{1}{r^d\omega_d}\int\limits_{D\cap B_r} \left|\chi_n(y)-\frac{r^d\omega_d}{|D\cap B_r|}\chi(x)\right|\dd y \dd x\\
    &\leq \int\limits_D \frac{1}{r^d\omega_d}\int\limits_{D\cap B_r} \left|\chi(y)-\frac{r^d\omega_d}{|D\cap B_r|}\chi(x)\right|\dd y \dd x\\
    &\quad+\int\limits_D \frac{1}{r^d\omega_d}\int\limits_{D\cap B_r}|\chi_n(y)-\chi(y)|\dd y \dd x\\
    &\leq \int\limits_D \frac{1}{r^d\omega_d}\int\limits_{D\cap B_r}\left|\chi(y)-\frac{r^d\omega_d}{|D\cap B_r|}\chi(x)\right|\dd y \dd x\\
    &\quad+\int\limits_D \frac{|D\cap B_r(x)|}{r^d\omega_d}|\chi_n(x)-\chi(x)|\dd x\\
    &\leq \int\limits_D \frac{1}{r^d\omega_d}\int\limits_{D\cap B_r}|\chi(y)-\chi(x)|\dd y \dd x+\int\limits_D \left|\frac{|D\cap B_r|}{r^d\omega_d}\chi(x)-\chi(x)\right| \dd x\\
    &\quad +\int\limits_D \frac{|D\cap B_r(x)|}{r^d\omega_d}|\chi_n(x)-\chi(x)|\dd x.
\end{align*}
This converges to 0 by the Lebesgue point theorem and dominated convergence.
Thus, we get

\begin{align*}
    &\left|\int\limits_D\chi_n(x)\frac{1}{r^d\omega_d}\int\limits_{D\cap B_r(x)}\chi_n(y)\dd y\dd x-\int\limits_D \chi^2(x)\dd x\right|\\
    &\quad\leq \int\limits_D|\chi_n(x)-\chi(x)|\frac{1}{r^d\omega_d}\int\limits_{B_r\cap D}|\chi_n(y)|\dd y\dd x\\
    &\quad\quad+\int\limits_D|\chi(x)| |\frac{1}{r^d\omega_d}\int\limits_{B_r\cap D}\chi_n(y)\dd y-\chi(x)|\dd x\\
    &\quad\leq \int\limits_D|\chi_n(x)-\chi(x)|\frac{|B_r\cap D|}{r^d\omega_d}\dd x+\int\limits_D \left|\frac{1}{r^d\omega_d}\int\limits_{B_r\cap D}\chi_n(y)\dd y-\chi(x)\right|\dd x.
\end{align*}
For a general admissible kernel, we note that since it is non-increasing and by continuity there are $c_1,c_2$ s.t.\ $K(x,y)\geq c_1\mathds{1}_{B_{c_2}(x)}(y)$. Thus, since $\chi_n,\chi\geq 0$, the result reduces to the standard case.

Hence, we can conclude that the bounds in the maximum and minimum of the energy go to $\pm\infty$ if $\chi\not\equiv 0$ after the normalization by the space size of the kernel. Therefore, we can conclude the $\Gamma$-convergence as the energy is $0$ for $\chi=0$.

The following theorem about the $\Gamma$-convergence of the non-local $\TV$-energy is a simple extension of \cite[Theorem 4.1]{TrillosSlepcev}. It follows analogously to the convergence of the Dirichlet energies shown in \cite[Theorem 12, 14]{jonatim}.

\begin{thm}[Convergence of the $\TV$-energy]
\label{Thm:GammaTV}
Let $K$ be an admissible kernel on a compact manifold $M$ (or $\mathbb{R}^d$) with a continuous density $\rho>0$ and smooth boundary. Additionally, let $D$ be a relatively open bounded Lipschitz set in the inner of $M$.  
Let 
$$E_h(u)\coloneqq \frac{1}{\sqrt{h}}\int\limits_D\int\limits_D K_{\sqrt{h}}(d_M(x,y))|u(x)-u(y)|\rho(x)\rho(y)\dd \vol(y)\dd \vol(x)$$
for $u\in L^1(M;[0,1])$ and define the total variation
$$E(u)\coloneqq \sup\left\{\int\limits_D u{\ddiv}_M \varphi \dd \vol(x):|\varphi(x)|\leq \rho^2(x), \varphi\in \Gamma(TM)\right\}.$$
Then, $E_h$ $\Gamma$-converges to $k_1 E$ in the strong $L^1(D)$-topology. Moreover, they satisfy the $\Gamma$-compactness in the same topology.
\end{thm}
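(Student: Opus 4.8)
The plan is to follow the standard three-step scheme for $\Gamma$-convergence of a nonlocal functional to its local counterpart (Bourgain--Brezis--Mironescu / Ponce, in the form used in \cite{TrillosSlepcev} and adapted on manifolds in \cite{jonatim}): the liminf inequality, a recovery sequence, and $\Gamma$-compactness. In every step one first reduces to a local, almost-Euclidean picture: cover $M$ by finitely many charts, take a subordinate partition of unity, and pass to geodesic normal coordinates, where $d_M(x,y)=|x-y|\bigl(1+O(|x-y|^2)\bigr)$ and where continuity of $\rho$ gives $\rho(x)\rho(y)\to\rho^2(x)$ uniformly as $y\to x$. In these coordinates $D$ becomes a Lipschitz subset of $\mathbb{R}^d$, and because $K$ is non-increasing with $\int_0^\infty K(r)r^{d}\,\dd r<\infty$ (equivalently $k_1<\infty$), the errors introduced by the localization, the curvature correction to $d_M$, and the variation of $\rho$ are all of lower order.

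For the liminf inequality, take $u_h\to u$ in $L^1(D)$ with $\liminf_h E_h(u_h)<\infty$. After localization I would argue by one-dimensional slicing: for a fixed direction $e\in\mathbb{S}^{d-1}$ and a.e.\ line $\ell$ parallel to $e$, the inner integral controls, in the limit $h\to 0$, a constant multiple of the total variation of $u|_{\ell\cap D}$; integrating over the lines and then over $e$ against the kernel produces exactly the prefactor $k_1$, since by rotational symmetry $\int_{\mathbb{R}^d}K(|z|)\,|\xi\cdot z|\,\dd z=k_1|\xi|$. (Alternatively one bounds $K_{\sqrt h}$ from below by a multiple of the normalized indicator kernel, using monotonicity, and invokes the BBM-type lower bound.) Restricting both integrations to $D$ causes no loss because the limit never involves the trace of $u$ on $\partial D$, so the limit is the weighted relative total variation $k_1E(u)$.

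For the recovery sequence, given $u\in C^1(\overline D)$ I take the constant sequence $u_h\equiv u$. A first-order Taylor expansion $u(y)-u(x)=\nabla u(x)\cdot(y-x)+o(|y-x|)$, the change of variables $z=(y-x)/\sqrt h$, and uniform continuity of $\rho$ give
\begin{align*}
    \lim_{h\to 0} E_h(u)=\int_D\int_{\mathbb{R}^d}K(|z|)\,|\nabla u(x)\cdot z|\,\dd z\,\rho^2(x)\,\dd\vol(x)=k_1\int_D|\nabla u|\,\rho^2\,\dd\vol=k_1E(u),
\end{align*}
the contributions from the boundary layer $\{x:d_M(x,\partial D)\lesssim\sqrt h\}$ and from the curvature term being $o(1)$ since $u\in C^1$ and $\partial D$ has finite $\mathcal{H}^{d-1}$-measure. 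One then upgrades to arbitrary $u\in BV(D;[0,1])$ (there is nothing to prove otherwise, as $E(u)=+\infty$) by strict approximation of $BV$ functions by smooth ones combined with lower semicontinuity of the $\Gamma$-$\limsup$ and a diagonal argument; Lipschitz regularity of $D$ enters here through the existence of an extension operator, exactly as in \cite[Theorem 4.1]{TrillosSlepcev}.

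For $\Gamma$-compactness, let $\sup_h E_h(u_h)<\infty$ with $u_h\in L^1(M;[0,1])$, so $(u_h)$ is bounded in $L^1$ and equi-bounded pointwise. Since $K$ is non-increasing and positive near $0$, one has $K_{\sqrt h}(|z|)\geq c_1 h^{-d/2}\mathds{1}_{B_{c_2\sqrt h}}(z)$, so $E_h(u_h)$ dominates $\tfrac{1}{\sqrt h}\fint_{B_{c_2\sqrt h}(x)\cap D}|u_h(x)-u_h(y)|\,\dd y$ integrated in $x$; a covering/chaining argument then yields $\sup_h\|u_h(\cdot+\xi)-u_h\|_{L^1}\to 0$ as $|\xi|\to 0$ (the boundary being handled via the uniform cone condition of the Lipschitz domain), and Riesz--Fréchet--Kolmogorov gives precompactness in $L^1(D)$, with the limit automatically in $BV$ by the liminf inequality. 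The main obstacle is not any individual step but the uniform treatment of the Lipschitz boundary of $D$ --- both for the recovery sequence in the layer $\{d_M(\cdot,\partial D)\lesssim\sqrt h\}$ and for the compactness estimate up to $\partial D$ --- which is dealt with by the extension and cone arguments already available in \cite{TrillosSlepcev} and \cite{jonatim}; the passage from $\mathbb{R}^d$ to a manifold with a merely continuous density is then routine via the charts above.
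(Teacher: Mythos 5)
The paper does not actually present a proof of Theorem~\ref{Thm:GammaTV}; it defers entirely to \cite[Theorem~4.1]{TrillosSlepcev} and \cite[Theorems~12 and~14]{jonatim}, describing the result as ``a simple extension'' of those. Your proposal reconstructs exactly what that deferred argument would be --- localization to almost-Euclidean charts with a partition of unity, BBM-type liminf via slicing (with the identity $\int K(|z|)|\xi\cdot z|\,\dd z=k_1|\xi|$ producing the constant), constant recovery sequence for $C^1$ functions plus strict $BV$-approximation and a diagonal argument, and $L^1$-compactness via a Riesz--Fr\'echet--Kolmogorov estimate using the lower bound $K\gtrsim\mathds{1}_{B_{c_2}}$ --- so it is correct and takes essentially the same approach the paper indicates.
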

For smooth $u$, the total variation can be written as $E(u)=\int\limits_D |Du|\rho^2\dd \vol$. For characteristic functions, we call $E(\chi)$ also the perimeter. This theorem can be used analogously to \cite{TrillosSlepcev} and \cite{jonatim} to get a $\TL^1$-$\Gamma$-convergence result of the sampled discrete energies.

In the same fashion as before, we extend Lemma~\ref{Lemma:NeumannConditions} to this setting.

\subsection{Dirichlet conditions and SSL}

The problem of Dirichlet conditions and semi-supervised learning are related in the way that they partially impose fixed values on our solution. For Dirichlet conditions, these values are given on a larger area which usually has positive capacity and only have to be satisfied in the trace sense or a similar way. Thus, it suffices to extend the domain in a $r$ wide range and fix values according to (an extension of) the given boundary values on this domain. Then, the algorithm will obey this fixed values and attain them at the boundary. It is noteworthy that even in this simpler case the formulation of the evolution with Dirichlet values is not trivial, cf.~\cite{bian2023level} where the level set mean curvature flow with Dirichlet boundary conditions is defined via an obstacle problem.

For semi-supervised learning, the problem is more complicated as mostly only single values have prescribed values. In the limit, these values would vanish and thus, one has to impose more restrictions to retain their values. Usually, one is interested in a weak form of continuity. There are different possibilities to handle this. One possibility would be to fix a large enough neighborhood and prescribe the same value to every sampled point in this neighborhood. This would fall in the vicinity of Dirichlet boundary values. Another way is to dictate a local profile which the solution has to follow.

Often one of two different approaches is taken. For one, we can change the kernel to take into account the given values. In this way, we impose a weight that places a positive capacity onto these single values. Specifically, we follow~\cite{CalderSlepcev} who analyze SSL on minimizers of the Dirichlet energy. They impose an additional kernel factor $\gamma(x,y)$ which depends on the vicinity to the prescribed values (which we call $\Gamma$):
\begin{align*}
    \gamma(x,y)&\coloneqq \frac{\gamma_\zeta(x)+\gamma_\zeta(y)}{2},\\
    \gamma_\zeta(x)&\coloneqq \zeta\land \left(1+\left(\frac{r_0}{\dist(x,\Gamma)}\right)^\alpha\right)\text{ for }\dist(x,\Gamma)\leq R.
\end{align*}
Here, $\zeta$ is a cutoff to avoid singularities and will go to $\infty$ sufficiently fast, $r_0, R$ are length parameters to tweak the effect of the weight and $\alpha$ determines the strength and needs to be larger than $d-2$. Additionally, $\gamma$ is extended outside the neighborhood of size $R$. Larger $\alpha$ and smoother extensions lead to smoother evolutions as can be seen in \cite{CalderSlepcev}. We note, that the choice of $\gamma$ dictates a local profile.

Since all previous computations carry over, we obtain the evolution starting with $u^0\coloneqq g$:
$$u^{n+1}\in \argmin\left\{\int\limits_D\int\limits_D \gamma K(x,y)|u(x)-u^n(y)|\dd y\dd x\right\}.$$

Similarly, \cite{MBOSSLBertozzi} changes the differential operator with a fidelity term which would also result in a different kernel. This idea is related to directly adding a fidelity or compliance term to the energy as done in \cite{jacobs2017fast, jonatim}. In this case, we add an additional force term that enforces the Dirichlet values. The idea is to precompute a function with the $\Delta^\infty$-operator that gives a direction and strength depending on the distance to the Dirichlet values. This procedure cannot directly be carried over to the level set functions as the precomputed functions contains other values than $0$ and $1$. In the case where the function is discrete, we would obtain a strong enforcement with the $\pmed$ where $p=\sign(u-\text{Dirichlet value})$ on $\Gamma$. Instead, we can change this and take the $\pmed$ with $p=\gamma \cdot v_1\sign(u-v_2)$ where $v_2$ is a function extending the Dirichlet values in a smooth way and $v_1\in [0,1]$ determines the strength. Additionally, $\gamma$ is a scaling factor.
The functions $v_1, v_2$ can be computed by the discrete version of $\Delta^\infty$. In the graph case with a kernel that is a characteristic function, the defining equation becomes:
\begin{align*}
    \begin{cases}
        v_1(x)=1 &\text{on }\Gamma,\\
        v_1(x)=\frac{\max\limits_A v_1(y) +\min\limits_A v_1(y)}{2} &\text{else}.
    \end{cases}
\end{align*}

\section{Related Work}
\label{History}

The analysis of mean curvature flow has a very extensive history. We will concentrate on the more recent development of approximation schemes for mean curvature flow. Of particular note is the MBO scheme also called the thresholding scheme due to \cite{merriman1992diffusion, merriman1994motion}. It introduces the idea of a two-step scheme to approximate mean curvature flow in an effective and simple way as described in Section~\ref{Algorithm}. 
It is also possible to talk about this scheme after the onset of singularities. These can develop in finite time due to the degenerate nature of mean curvature flow, even for smooth initial data. This raises the question of weak solution concepts. A particularly useful one for level set mean curvature flow is that of viscosity solutions. It has been introduced separately by \cite{ChenGigaGoto} and \cite{ESI}. In the series of papers of \cite{ESI} as well as in \cite{laux2023genericlevelsetsmean, hensel2021newvarifoldsolutionconcept}, the connection between different weak (and strong) solution concepts is made. The convergence of the thresholding scheme to the viscosity solution of level set mean curvature flow was proved in \cite{evans1993convergence}.
Another efficient algorithm for the case of a single characteristic set until the onset of fattening or the development of singularities is given in \cite{chambolle2004algorithm} based on \cite{AlmgrenTaylorWang}. This scheme has also been analyzed in \cite{dephilippis2019implicittimediscretizationmean}.
The idea of median filters with the observation that the level set approach of the thresholding scheme has the same minimization problem of medians was introduced in \cite{Oberman}. Further generalizations were made in \cite{esedoglumedianfilter}. An important quantity in the analysis of these schemes is the associated energy of the thresholding scheme, the heat content. The convergence of this energy in the multiphase setting was proved in \cite{esedoglu2015threshold, laux2019thresholdingschememeancurvature}.
The first approach is based on a monotonicity in time while the other one is based on the proof of \cite{alberti1998non}.
We additionally consider a fully discretized scheme with a point process for the space discretization. This perspective of space discretizations in addition to the non-locality was taken in \cite{CalderSlepcev,TrillosSlepcev, SlepcevTrillos2} and extended in \cite{jonatim}. Similarly, \cite{van2014mean} also analyzed the MBO scheme on discrete graphs.
A similar approach for the fixed grid in 2d with a 5-point stencil is analyzed in \cite{misiatsYip}. The main question of this work is which regimes lead to convergence and when pinning or freezing occurs.

More specifically, in \cite{CalderSlepcev}, it is shown that the Dirichlet energies converge $E_{r,N}\overset{\Gamma}{\to}k_2E$ in ${\TL}^2(D)$ in the setting of euclidean space without density as long as $r\gg d_\infty(\mu,\mu_N)$. 
The work \cite[Theorem 1.4]{SlepcevTrillos2} improves this to optimal setting where $r\gg \frac{\log^\frac{1}{d}N}{N^\frac{1}{d}}$ is bounded by the connectivity radius. This is done for Lipschitz domains $D$ with continuous density and a general kernel as described in the discussion at the end of Section~\ref{TL-convergence}. Building on this, \cite{jonatim} extends the results to the setting of smoothly weighted manifold.

\section{Outlook}
\label{Outlook}

A possible direction which seems promising in proving the convergence of the discrete MBO scheme is to lift Section~\ref{TL-convergence} to the $L^1-$setting. For this, one needs general ${\TL}^p-$spaces introduced in \cite{TrillosSlepcev}.

In the following, let $1\leq p<\infty$.

\begin{defi}[${\TL}^p$-space]
    We define the space of compatible measure-function pairs $(\mu,f)$ as 
    $${\TL}^p(X)\coloneqq \{(\mu,f) \left| \mu\in \mathcal{P}(X), f\in L^p(X,\mu) \right\}.$$
\end{defi}

\begin{defi}[${\TL}^p$-convergence]
    We define for $(\mu,f),(\nu,g)\in {\TL}^p(X)$ the distance $d_{{\TL}^p}$ as
    $$ d_{{\TL}^p}((\mu,f),(\nu,g))\coloneqq \inf\limits_{\pi\in \Gamma(\mu,\nu)}\left(\int\limits_X\int\limits_X |x-y|^p+|f(x)-g(y)|^p\dd\pi(x,y)\right)^\frac{1}{p}$$
    and say that a sequence $\{(\mu_n,f_n)\}_{n\in\mathbb{N}}$ converges in ${\TL}^p$ if
    $$d_{{\TL}^p}((\mu_n,f_n),(\mu,f))\to 0.$$
    Here, $\Gamma(\mu,\nu)$ is the set of all transport plans from $\mu$ to $\nu$.
\end{defi}

There are a few difficulties with this approach. For one, we used the implicit Euler-scheme and one would want an explicit scheme to achieve a median filter. In this case, the distance would become more degenerate, especially if the kernel used is not positive everywhere under the Fourier transform.

For instance, in the heat equation, the explicit Euler scheme with evolution
\begin{align*}
    u_N^{n+1}&\coloneqq \argmin\left\{\sum\limits_x\frac{1}{\#(B_r(x))}\sum\limits_{y\in B_r(x)}|u(x)-u_N^n(y)|^2\right\}\\
    &=\frac{1}{\#(B_r(x))}\sum\limits_{y\in B_r(x)}u_N^n(y),\\
    \frac{u_N^{n+1}(x)-u_N^n(x)}{r^2}&=\frac{\frac{1}{\#(B_r(x))}\sum\limits_{y\in B_r(x)}(u_N^n(y)-u_N^n(x))}{r^2}.
\end{align*}
would lead to the distance
$$d_N(u,u_N^n)=\int(u(x)-u_N^n(x))\fint_{B_r(x)}(u(y)-u_N^n(y))\dd\mu_N\dd\mu_N.$$
This distance still is a continuous perturbation (using the $\infty-$transport plans) but not necessarily non-negative and does not need to satisfy the triangle inequality. It is the same distance as for the minimizing movement formulation of the MBO scheme. 
For kernels other than the ball (any kernel with a non-negative Fourier transform), this would still make sense and lead to convergence.

Another problem arises directly in the $L^1$-formulation of $\TV$. This minimizing movement would look more like a rate independent system with an inhomogeneous scaling. In addition, the energy would no longer be uniformly convex.

A different interesting question is that of anisotropy. Similar to \cite{IPS}, one could formulate general curvature dependent motions which do need not to be isotropic. In these cases, all our questions could be asked as well.

An additional interesting direction is to change the algorithm from a Jacobi formulation to a Gauss-Seidel form. I.e., one would take random points and update them according to the median on the constructed graph. This version yields similar results in simulations.

\section{Acknowledgment}

The authors would like to thank Prof.\ Felix Otto for many fruitful discussions and help in this work.

\frenchspacing
\bibliographystyle{abbrv}
\bibliography{References.bib} 
\end{document}